\newtheorem{theorem}{Theorem}
\newtheorem{corollary}[theorem]{Corollary}
\newtheorem{lemma}[theorem]{Lemma}
\newtheorem{proposition}[theorem]{Proposition}
\newtheorem{remark}[theorem]{Remark}
\newtheorem{definition}[theorem]{Definition}
\newtheorem{notation}[theorem]{Notation}
\numberwithin{theorem}{section}
\numberwithin{figure}{section}
\numberwithin{equation}{section}
\begin{document}

\title{Conditional measures of generalized Ginibre point processes}

\author{Alexander I. Bufetov and Yanqi Qiu}


%
%
\maketitle

\abstract{The main result of this paper is that conditional measures of generalized Ginibre point processes, with respect to the configuration in the complement of a bounded open subset  on
$\mathbb{C}$, are orthogonal polynomial ensembles with weights found explicitly.
An especially simple formula for conditional measures is obtained in the particular
case of radially-symmetric determinantal point processes, including the classical Ginibre point process.\\

\noindent \textbf{Keywords:} determinantal point processes, the Gibbs property, multiplicative functionals,
quasi-invariance.}

\newcommand{\R}{\mathbb{R}}
\newcommand{\N}{\mathbb{N}}
\newcommand{\Z}{\mathbb{Z}}
\newcommand{\Q}{\mathbb{Q}}
\newcommand{\C}{\mathbb{C}}
\newcommand{\E}{\mathbb{E}}
\newcommand{\PP}{\mathbb{P}}
\newcommand{\F}{\mathbb{F}}

\newcommand{\X}{\mathscr{X}}
\newcommand{\M}{\mathcal{M}}
\newcommand{\ZZ}{\mathcal{Z}}
\newcommand{\K}{\mathcal{K}}

\newcommand{\Conf}{\mathrm{Conf}}
\newcommand{\tr}{\mathrm{tr}}
\newcommand{\rank}{\mathrm{rank}}
\newcommand{\Var}{\mathrm{Var}}
\newcommand{\supp}{\mathrm{supp}}
\newcommand{\esssup}{\mathrm{esssup}}
\newcommand{\sgn}{\mathrm{sgn}}

\newcommand{\Det}{\mathrm{det}}
\newcommand{\Ran}{\mathrm{Ran}}

\newcommand{\an}{\text{\, and \,}}
\newcommand{\as}{\text{\, as \,}}

\newcommand{\ch}{\mathbbm{1}}

\section{Introduction}
\subsection{Outline of the main results}
Let $\phi: \C\rightarrow\R$ be a real function. Under some additional assumptions, one can assign to $\phi$ the generalized Fock space $\mathscr{F}_\phi$ of holomorphic functions on $\C$, square integrable with respect to the measure $d\lambda_{\phi}(z)=e^{-2\phi(z)}d\lambda(z)$, where $d\lambda$ is the Lebesgue measure on $\C$. The orthogonal projection operator $\Pi\colon L^2\left(\mathbb{C},d\lambda_{\phi}\right)\rightarrow \mathscr{F}_\phi$ induces a determinantal measure $\mathbb{P}_\Pi$ on the space of configurations on $\mathbb{C}$. For example, for $\phi(z)={|z|}^2$ one obtains the classical Ginibre point process of random matrix theory. In this paper we describe, for the point process $\mathbb{P}_\Pi$, conditional measures in a bounded domain $B$ with respect to the fixed configuration in the exterior $\mathbb{C}\setminus B$.

In Theorem \ref{cond-general} below, under some additional assumptions we show that these conditional measures are orthogonal polynomial ensembles of the form
\begin{equation}
\label{cond-prelim}
Z^{-1}\prod_{1\leqslant i<j\leqslant N}\left|z_i-z_j\right|^2\cdot\prod_{i=1}^N \rho\left(z_i\right)d\lambda(z_i),
\end{equation}
where $Z$ is the normalization constant and the weight $\rho$ is found explicitly as a function of the fixed configuration $\X\setminus B  = \X \cap (\C \setminus B)$. Theorem \ref{cond-general} is an analogue of the Gibbs property for our processes (see e.g. Sinai \cite{sinai}).

 In particular, if the function $\phi$ is radial, i.e., only depends on $|z|$, and the domain $B$ contains $0$, then we have
\[
\rho(z)=\prod_{x\in \X \backslash B}\left|1-\frac{z}{x}\right|^2\cdot\frac{d\lambda_{\phi}}{d\lambda}(z),
\]
where the product is taken over the fixed particles of our configuration $\X$ in $\mathbb{C}\backslash B$ and understood in principal value, see Corollary  \ref{cond-radial} below.

The proof of Theorem \ref{cond-general}  follows the general scheme, developed in  \cite{Buf-gibbs}, \cite{buf-cond} for point processes on $\mathbb{R}$,  of the computation of conditional measures in intervals with respect to fixed exterior and relies on the results of \cite{QB3} on Palm measures and quasi-symmetries of determinantal point processes corresponding to Hilbert spaces of holomorphic functions (see \cite{Buf-gibbs}, \cite{GO-Adv} for more background on quasi-symmetries of determinantal point processes). Regularization of multiplicative functionals requires extra effort in the complex case since we must work with the von~Neumann-Schatten class $\mathscr{C}_3$ instead of the space of Hilbert-Schmidt operators.

The main part  of the proof of Corollary \ref{cond-radial} is the explicit computation of the normalization constants in the radial case. This explicit computation is given in Theorem \ref{thm-main1}.
The proof of Theorem \ref{thm-main1} proceeds by finite-dimensional approximation, the main difficulty
coming from the need for  careful estimates of conditionally convergent series, see especially Lemma \ref{lem-E4}   .

\subsection{Formulation of the main results}

Recall that a configuration $ \X$ on the complex plane $\C$ is a locally finite subset $\X \subset \C$ or, equivalently, the corresponding counting measure.  The space of configurations $\Conf(\C)$ is a subset of the space $\mathfrak{M}(\C)$ of Radon measures on $\C$ and itself a complete separable metric space (cf. e.g. Kallenberg \cite{kal1}, \cite{kal2}). We equip  $\Conf(\C)$ with its Borel sigma algebra.  Given a bounded Borel subset $B \subset \C$ and a configuration $\X \in \Conf(\C),$ let $\#_B(\X)$ stand for the number of particles of $\X$ lying in $B$; the random variables $\#_B$ over all bounded Borel subsets $B$ generate the Borel sigma-algebra.   Given a Borel subset $W \subset \C,$ we let $\mathcal{F}_W$ be the $\sigma$-algebra generated by all random variables of the form $\#_B$ with $B$ ranging over all Borel subsets of $W$.
Given a configuration $\X$ and a subset $W$ of $\C$, we write $\X|_W$ for the restriction of $\X$ onto the subset $W$.

In this paper we consider  point processes on the complex plane $\C$, i.e.  Borel probability measures on the space $\Conf(\C)$ of configurations of $\C$.
For such a measure $\PP$ and a Borel subset $W \subset \C$, the measure $\PP(\cdot | \X; W)$ on $\Conf(\C \setminus W)$ is defined for $\PP$-almost every configuration $\X$ as the conditional measure of $\PP$ with respect to the condition that the restriction of our random configuration onto $W$ coincides with $\X|_W$. More formally, consider the surjective restriction mapping $\X \to \X|_W$ from
$\Conf(\C)$ to $\Conf(W)$. Fibres of this mapping can be identified with $\Conf(\C\backslash W)$, and conditional measures, in the sense of Rohlin \cite{Rohmeas},  are precisely the measures $\PP(\cdot | \X; W)$.
If the point process $\PP$ admits correlation measures of order up to $\ell$, then, given distinct points $q_1, \dots, q_\ell \in \C$, we let $\PP^{q_1, \dots, q_\ell}$ stand for the $\ell$-th reduced Palm measure of $\PP$ conditioned at points $q_1, \dots, q_\ell$ (here and below we follow the conventions of \cite{Buf-gibbs} in working with Palm measures).

Recall that for any $q\in\C$, we define
\[
\Pi^{q}(x,y) = \Pi(x,y) - \frac{\Pi(x,q) \Pi(q,y)}{ \Pi(q,q)}.
\]
More generally,  for an $\ell$-tuple $\mathfrak{q} = (q_1, \cdots, q_\ell)$ of distinct points in $\C$, we define $\Pi^{\mathfrak{q}} = (\cdots (\Pi^{q_1})^{q_2} \cdots)^{q_\ell}$. The Shirai-Takahashi Theorem \cite{ShirTaka1} asserts that   $\PP_\Pi^{\mathfrak{q}} = \PP_{\Pi^\mathfrak{q}}$.

Throughout this paper, we fix a $C^2$-function $\phi: \C \rightarrow \R$. We equip  the complex plane $\C$ with the measure $d\lambda_\phi(z) = e^{-2\phi(z)} d\lambda(z)$, where $d\lambda$ is the Lebesgue measure. We always assume that there exist positive constants $m, M > 0$ such that
\begin{align}\label{sub-h}
m \le \Delta \phi \le M,
\end{align}
where $\Delta$ is the Euclidean Laplacian. Denote by $\mathscr{F}_\phi$ the generalized Fock space with respect to the weight $e^{-2 \phi(z)}$ and let  $\Pi$ be the reproducing kernel of $\mathscr{F}_\phi$.
Let $\PP_{\Pi}$ be the  determinantal measure on $\Conf(\C)$ corresponding to the kernel $\Pi$ considered with respect to the reference measure $d\lambda_{\phi}(z)$ on the phase space $\C$ ( see  e.g. \cite{Buf-gibbs}, \cite{QB3}, \cite{Macchi}, \cite{ShirTaka1}, \cite{ShirTaka2}, \cite{Soshnikov} for the background on spaces of configurations and  determinantal point processes).

For any $\ell \in\N$ and any  two $\ell$-tuples $\mathfrak{p} = (p_1, \dots, p_\ell)$ and $\mathfrak{q} = (q_1, \dots, q_\ell)$ of distinct points in $\C$, we fix  a positive number $r_{\mathfrak{p}, \mathfrak{q}}>0$, continuously depending on $\mathfrak{p}, \mathfrak{q}$ and large enough in such a way that
\begin{align}\label{close-1}
\sup_{|z|\ge r_{\mathfrak{p}, \mathfrak{q}}} \Big|  \Big| \frac{z-p_i}{z- q_i }\Big|^2 -1  \Big|\le  1/2 \text{ for all $1\le i \le \ell$}  \an \sup_{|z|\ge r_{\mathfrak{p}, \mathfrak{q}}} \Big|  \prod_{i= 1}^\ell\Big| \frac{z-p_i}{z- q_i }\Big|^2 -1  \Big|\le  1/2.
\end{align}
For any $p\in \C$ and $z\in \C^*$ set
\begin{align}\label{def-kappa}
\kappa(p, z) : = \frac{p}{z} +  \frac{\bar{p}}{\bar{z}} +   \frac{p^2}{2 z^2}  +  \frac{\bar{p}^2}{2\bar{z}^2}
\end{align}
and for an $\ell$-tuple  $\mathfrak{p} = (p_1, \dots, p_\ell)$ of distinct points in $\C$ write
\begin{align}\label{def-kappa-g}
\kappa(\mathfrak{p}, z) : =  \sum_{i =1}^\ell \kappa(p_i, z).
\end{align}

Recall that the tail sigma-algebra  consists of those Borel subsets of $\Conf(\C)$ that, for any bounded Borel $B$, belong to the sigma-algebra $\mathcal{F}_{\C\setminus B}$.

\begin{proposition}\label{prop-im} Let $\mathfrak{p} = (p_1, \dots, p_\ell)$ and $\mathfrak{q} = (q_1, \dots, q_\ell)$ be  two $\ell$-tuples of distinct points in $\C$.
\begin{itemize}
\item[(i)]  The limit
\begin{align}\label{general-tuple}
\Psi_{\mathfrak{p}, \mathfrak{q}}(\X)  =  \lim_{R\to\infty} \exp\Big(   \int\limits_{r_{\mathfrak{p}, \mathfrak{q}} \le |z|\le R}    (  \kappa(\mathfrak{p}, z) - \kappa(\mathfrak{q}, z) )  \Pi(z,z)  d\lambda_\phi(z) \Big)   \prod_{x\in\X: | x| \le R}   \prod_{i= 1}^\ell\Big| \frac{x-p_i}{x- q_i }\Big|^2
\end{align}
exists in $L^1(\Conf(\C), \PP_{\Pi}^{\mathfrak{q}})$.
\item[(ii)] The Palm measures $\PP_{\Pi}^{\mathfrak{p}}$ and $\PP_{\Pi}^{\mathfrak{q}}$ are in the same measure class.  The Radon-Nikodym derivative $d \PP_{\Pi}^{\mathfrak{p}}/ d \PP_{\Pi}^{\mathfrak{q}}$ is given by
\begin{equation}\label{rdder}
\frac{d \PP_{\Pi}^{\mathfrak{p}}}{ d \PP_{\Pi}^{\mathfrak{q}} } (\X) = \frac{\Psi_{\mathfrak{p}, \mathfrak{q}} (\X) }{  \displaystyle{\int\limits_{\Conf(\C)} \Psi_{\mathfrak{p}, \mathfrak{q}} d \PP_{\Pi}^{\mathfrak{q}}}}.
\end{equation}
Consequently, $\Psi_{\mathfrak{p}, \mathfrak{q}} (\X)$ is positive for $\PP_\Pi^{\mathfrak{q}}$-almost every configuration $\X$.  
\item[(iii)] There exists a Borel subset $\mathcal{W} \subset \Conf(\C)$  belonging to the tail $\sigma$-algebra and satisfying $\PP_{\Pi}(\mathcal{W} ) = 1$  such that for any bounded subset $K\subset \C$,  there exists a subsequence $R_n\to\infty$, along which the convergence in \eqref{general-tuple} takes place uniformly for all  $\ell$-tuples $\mathfrak{p}, \mathfrak{q}$ of distinct points in $K$ and $\X \in \mathcal{W}$.  The mapping $$(\mathfrak{p}, \mathfrak{q})\rightarrow  \Psi_{\mathfrak{p}, \mathfrak{q}}(\X)$$ is continuous on $\C^\ell \times (\C\setminus \X)^{\ell}$ for  every
configuration $\X\in {\mathcal W}$.

\item[(iv)] The function $\displaystyle{(\mathfrak{p}, \mathfrak{q})\rightarrow \int_{\Conf(\C)}  \Psi_{\mathfrak{p}, \mathfrak{q}}d \PP_{\Pi}^{\mathfrak{q}}}$ is continuous on $\C^\ell \times \C^{\ell}$.

\end{itemize}
\end{proposition}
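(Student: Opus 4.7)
The plan is to combine the Shirai-Takahashi identity $\PP_\Pi^\mathfrak{q} = \PP_{\Pi^\mathfrak{q}}$ with the regularized multiplicative-functional machinery for Palm measures of holomorphic-Hilbert-space processes developed in \cite{QB3}. Write $g_{\mathfrak{p}, \mathfrak{q}}(x) = \prod_{i=1}^\ell |(x-p_i)/(x-q_i)|^2$. A direct Taylor expansion of $\log|1-p/x|^2$ at infinity gives
\[
\log g_{\mathfrak{p},\mathfrak{q}}(x) = -\kappa(\mathfrak{p},x) + \kappa(\mathfrak{q},x) + O(|x|^{-3}) \qquad (|x|\to\infty),
\]
uniformly for $\mathfrak{p},\mathfrak{q}$ in any compact subset of $\C^\ell$. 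Because the phase space has real dimension two, both the $|x|^{-1}$ and the $|x|^{-2}$ Taylor coefficients are non-integrable against the bulk density $\Pi(z,z)d\lambda_\phi(z)$, which, under \eqref{sub-h}, is comparable to Lebesgue measure on $\C$ by standard Bergman-kernel estimates. Two orders of subtraction are therefore required: this is exactly the role of the regularizing exponential in \eqref{general-tuple} and the reason we must work in the Schatten class $\mathscr{C}_3$ rather than the Hilbert-Schmidt class $\mathscr{C}_2$ used in the real-line arguments of \cite{Buf-gibbs, buf-cond}.

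\textbf{Proof of (i) and (ii).} Let $F_R(\X)$ denote the $R$-th pre-limit in \eqref{general-tuple}. The Shirai-Takahashi multiplicative-functional formula yields, for $R$ large enough,
\[
\E_{\PP_\Pi^\mathfrak{q}}\Big[\prod_{x\in\X,\, |x|\le R} g_{\mathfrak{p},\mathfrak{q}}(x)\Big] = \det\bigl(I + (g_{\mathfrak{p},\mathfrak{q}}-1)\,\mathbf{1}_{|z|\le R}\,\Pi^\mathfrak{q}\,\mathbf{1}_{|z|\le R}\bigr).
\]
I would first verify that the operator $A := M_{g_{\mathfrak{p},\mathfrak{q}}-1}\Pi^\mathfrak{q}$ belongs to $\mathscr{C}_3$, using $g_{\mathfrak{p},\mathfrak{q}}-1 = O(|z|^{-1})$, the off-diagonal decay of $\Pi^\mathfrak{q}(z,w)$ inherited from Bergman-type estimates, and a $\mathscr{C}_6\cdot\mathscr{C}_6$ factorization. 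The identity $\det(I+A_R) = \det_3(I+A_R)\exp(\tr A_R - \tfrac12\tr A_R^2)$ then shows that the exponential prefactor in \eqref{general-tuple} is precisely the counter-term needed to cancel the divergent parts of $\tr A_R$ and $\tfrac12\tr A_R^2$ up to a bounded remainder; hence $\E_{\PP_\Pi^\mathfrak{q}}[F_R]$ converges to a finite limit, namely $\det_3(I+A)$ times a finite exponential factor. Combined with a variance estimate for $F_R$, this gives $L^1(\PP_\Pi^\mathfrak{q})$-convergence, which is (i). The explicit formula in (ii) then follows from Shirai-Takahashi together with the Palm-measure quasi-invariance established in \cite{QB3}, which identifies the limiting Radon-Nikodym derivative with the normalized $\Psi_{\mathfrak{p},\mathfrak{q}}$.

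\textbf{Proof of (iii), (iv); main obstacle.} To upgrade $L^1$-convergence to almost-sure uniform convergence on compact parameter sets, I would bound the $L^2(\PP_\Pi^\mathfrak{q})$-norm of the differences $F_{R_{n+1}} - F_{R_n}$ using the variance formula for linear statistics of a determinantal process, with integrand controlled by the $O(|z|^{-3})$ remainder above; Borel-Cantelli then produces a common subsequence $R_n$ giving almost-sure convergence on a dense countable parameter set, and equicontinuity of the pre-limit functionals in $(\mathfrak{p},\mathfrak{q})$ upgrades this to uniform convergence on compact subsets. The resulting convergence set $\mathcal{W}$ is tail-measurable because membership depends only on behavior at infinity, and the Palm equivalence from (ii) lets us transfer the full-measure statement from $\PP_\Pi^\mathfrak{q}$ to $\PP_\Pi$. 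Joint continuity of $\Psi_{\mathfrak{p},\mathfrak{q}}(\X)$ on $\C^\ell \times (\C\setminus\X)^\ell$ for $\X\in\mathcal{W}$ then follows from continuity of each pre-limit. Part (iv) is obtained by dominated convergence applied to the $\det_3$-representation, together with continuity of $\mathfrak{q}\mapsto\Pi^\mathfrak{q}$ in operator norm. The main obstacle throughout is the $\mathscr{C}_3$-regularity of $M_{g_{\mathfrak{p},\mathfrak{q}}-1}\Pi^\mathfrak{q}$ and the uniform-in-parameters control of the regularized determinant together with its two divergent trace counter-terms: this is precisely where the complex-plane geometry forces one beyond the Hilbert-Schmidt framework sufficient on the real line.
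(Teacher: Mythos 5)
Your plan for items (i)--(ii) is different from what the paper does and makes substantially more work. The paper does not re-derive the $L^1$-convergence from scratch: it cites Proposition \ref{prop-ref} (from \cite{QB3}), in which the regularizer is the logarithmic one $\exp(-2\sum_i\int_{|z|\le R}\log|(z-p_i)/(z-q_i)|\,\Pi^{\mathfrak q}(z,z)\,d\lambda_\phi)$, and then simply shows, via Lemma \ref{lem-kappa} and Lemma \ref{lem-o3}, that the ratio between this regularizer and the $\kappa$-regularizer of \eqref{general-tuple} converges to a finite positive constant, and that replacing $\Pi^{\mathfrak q}$ by $\Pi$ in the exponent costs another finite positive factor because $\Pi^{\mathfrak q}$ is a finite-rank perturbation of $\Pi$. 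Your sketch, by contrast, tries to prove $\mathscr{C}_3$-membership of $M_{g-1}\Pi^{\mathfrak q}$, to factor $\det=\Det_3\cdot\exp(\tr-\tfrac12\tr^2)$, and to identify the $\kappa$-counter-term with the divergent traces; even if it can be made to work, this forces you to control not only the diagonal integrals but also the commutator $\|[h_r^\infty,\Pi^{\mathfrak q}]\|_{HS}^2$ and the $\Pi$-versus-$\Pi^{\mathfrak q}$ discrepancy, which in the paper are exactly the error terms $E_3, E_4$ of Proposition \ref{prop-factor} and require the radially-symmetric assumption in places. None of this is needed for (i)--(ii). (The paper \emph{does} develop the $\Det_3$/$\mathscr{C}_3$ machinery, but for Theorem \ref{thm-main1}, the normalization constant, not for Proposition \ref{prop-im}.) Also, ``convergence of $\mathbb{E}[F_R]$ combined with a variance estimate gives $L^1$-convergence'' is not a valid inference for nonnegative multiplicative functionals; this is not how \cite{QB3} obtains $L^1$-convergence, and it is the hard part that the paper imports as a black box.

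For item (iii) there is a genuine gap. You propose Borel--Cantelli for the $L^2$-differences $F_{R_{n+1}}-F_{R_n}$ plus ``equicontinuity of the pre-limit functionals in $(\mathfrak p,\mathfrak q)$,'' but you never say how equicontinuity would be established, and that is precisely the hard part. The logarithm of the pre-limit decomposes as $H_1+H_2+H_3$, where $H_3$ is the finite sum over $|x|<r$, $H_2$ is the tail controlled by the uniform $\mathcal{O}(|x|^{-3})$ remainder, and $H_1(R,\X;\mathfrak p,\mathfrak q)$ is the difference between the integral of $\kappa(\mathfrak p,z)-\kappa(\mathfrak q,z)$ against $\Pi(z,z)\,d\lambda_\phi$ and the corresponding conditionally-convergent sum over $\X$; a priori this last term converges only along a subsequence and only $\PP_\Pi^{\mathfrak q}$-a.s. for a \emph{fixed} pair $(\mathfrak p,\mathfrak q)$. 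The paper's key observation---which your proposal does not contain---is that $H_1$ is affine in $(p_i-q_i)$ and $(p_i^2-q_i^2)$; by choosing particular test pairs (e.g.\ $q_1=-p_1=p$ with $p=1$ or $p=\sqrt{-1}$) one extracts the existence of two scalar limits $M_1(\X)$ and $M_2(\X)$, from which existence and uniformity of the limit follow simultaneously for \emph{all} $(\mathfrak p,\mathfrak q)$ in a compact. This linear-algebra trick is what makes $\mathcal{W}$ well-defined, tail-measurable, and of full measure; without it, ``membership depends only on behavior at infinity'' is an unsupported assertion. Your treatment of (iv) via dominated convergence in the $\Det_3$-representation is in the right spirit, but the paper also needs the chain rule $d\PP^{\mathfrak p}/d\PP^{\mathfrak q} = (d\PP^{\mathfrak p}/d\PP^{\mathfrak q^0})(d\PP^{\mathfrak q}/d\PP^{\mathfrak q^0})^{-1}$ to upgrade continuity in $\mathfrak p$ at fixed $\mathfrak q$ to joint continuity, a step your sketch omits.
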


\begin{corollary}\label{cor-radial}
Let $\mathcal W$ be as in item (iii) of Proposition \ref{prop-im}.
Assume that $\phi$ is radial and satisfies  \eqref{sub-h}. Then for any two $\ell$-tuples $\mathfrak{p} = (p_1, \dots, p_\ell)$ and $\mathfrak{q} = (q_1, \dots, q_\ell)$ of distinct points in $\C$, the limit
\begin{align}\label{general-tuple-radial}
\Gamma_{\mathfrak{p}, \mathfrak{q}}(\X)  =  \lim_{R\to\infty}  \prod_{x\in\X: | x| \le R}   \prod_{i= 1}^\ell\Big| \frac{x-p_i}{x- q_i }\Big|^2
\end{align}
exists for any $\X \in \mathcal{W}$ and in $L^1(\Conf(\C), \PP_{\Pi}^{\mathfrak{q}})$ .  Moreover, the function $(\mathfrak{p}, \mathfrak{q})\rightarrow  \Gamma_{\mathfrak{p}, \mathfrak{q}}(\X)$ is continuous on $\C^\ell \times (\C\setminus \X)^{\ell}$ for every configuration $\X\in \mathcal W$.
\end{corollary}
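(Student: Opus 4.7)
The plan is to deduce Corollary \ref{cor-radial} from Proposition \ref{prop-im} by showing that, under the assumption that $\phi$ is radial, the exponential compensator appearing in \eqref{general-tuple} is identically equal to $1$; once this is established, the expression inside the limit in \eqref{general-tuple} coincides with that in \eqref{general-tuple-radial}, and all claims transfer verbatim from items (i) and (iii) of Proposition \ref{prop-im}.

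The first step is to verify that when $\phi$ depends only on $|z|$, the diagonal of the reproducing kernel $\Pi(z,z)$ depends only on $|z|$. This is a standard consequence of rotation invariance: for every $\alpha \in \R$, the unitary operator $U_\alpha f(z) := f(e^{-i\alpha} z)$ preserves $L^2(\C, d\lambda_\phi)$ by radiality of $\phi$, and preserves $\mathscr{F}_\phi$ because $f$ is holomorphic iff $U_\alpha f$ is. Hence $U_\alpha$ commutes with $\Pi$, which translates into $\Pi(e^{i\alpha}z, e^{i\alpha}w) = \Pi(z,w)$ and in particular $\Pi(z,z) = \Pi(|z|, |z|)$.

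The second step is a direct computation. Writing $z = re^{i\theta}$ and using \eqref{def-kappa},
\[
\kappa(p, re^{i\theta}) \;=\; \frac{2\,\Re(p\,e^{-i\theta})}{r} \;+\; \frac{\Re(p^2 e^{-2i\theta})}{r^2}.
\]
Integrating against $\Pi(z,z)\,d\lambda_\phi(z) = \Pi(|z|,|z|)\,e^{-2\phi(|z|)}\,r\,dr\,d\theta$ over a complete annulus, the $\theta$-integral annihilates every term since $\int_0^{2\pi} e^{\pm ik\theta}\,d\theta = 0$ for $k \in \{1,2\}$. Summing over the tuple via \eqref{def-kappa-g}, for every $R \ge r_{\mathfrak{p}, \mathfrak{q}}$,
\[
\int_{r_{\mathfrak{p}, \mathfrak{q}} \le |z| \le R} \bigl(\kappa(\mathfrak{p}, z) - \kappa(\mathfrak{q}, z)\bigr)\,\Pi(z,z)\,d\lambda_\phi(z) \;=\; 0,
\]
so the exponential prefactor in \eqref{general-tuple} equals $1$ identically in $R$.

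It follows that the expression in the limit defining $\Psi_{\mathfrak{p}, \mathfrak{q}}(\X)$ in \eqref{general-tuple} is literally the product in \eqref{general-tuple-radial}. The existence of the limit in $L^1(\Conf(\C), \PP_\Pi^{\mathfrak{q}})$ is then precisely Proposition \ref{prop-im}(i); the existence of the limit for every $\X \in \mathcal{W}$, together with the continuity of $(\mathfrak{p}, \mathfrak{q}) \mapsto \Gamma_{\mathfrak{p}, \mathfrak{q}}(\X)$ on $\C^\ell \times (\C \setminus \X)^\ell$, follows from item (iii) of the same proposition. The only substantive point is the verification that $\Pi(z,z)$ is radial; the annular integration step is routine and I do not anticipate any genuine obstacle.
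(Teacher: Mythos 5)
Your proposal is correct and follows essentially the same route as the paper: the paper's proof consists precisely of the observation that $\int_{r\le|z|\le R}\kappa(\mathfrak{p},z)\,\Pi(z,z)\,d\lambda_\phi(z)=0$ when $\phi$ is radial, after which everything is inherited from Proposition \ref{prop-im}. Your write-up merely makes explicit the two facts the paper leaves tacit — that radiality of $\phi$ forces $\Pi(z,z)$ to depend only on $|z|$ via rotation invariance of $\mathscr{F}_\phi$, and that the angular integral of $\kappa$ then vanishes term by term — which is a harmless expansion of the same argument.
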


For example, for $p\in\C$, the limit
\begin{equation}\label{gammapzero}
\Gamma_{p, 0} (\X)  = \lim_{R\to\infty}  \prod_{x\in\X: | x| \le R}   \Big| 1- \frac{p}{x}\Big|^2
\end{equation}
exists  for any $\X\in \mathcal{W}$ and in $L^1(\Conf(\C), \PP_{\Pi}^{0})$.

\begin{proof}[Proof of Corollary \ref{cor-radial}]
 If $\phi: \C \rightarrow \R$ is radial, then for any $r$  satisfying $0 < r < R$ we have
\[
 \int\limits_{r \le |z|\le R}    \kappa(\mathfrak{p} , z)  \Pi(z,z)  d\lambda_\phi(z)  =  \int\limits_{r \le |z|\le R}    \kappa(\mathfrak{q}, z)  \Pi(z,z)  d\lambda_\phi(z)  = 0.
\]
Corollary \ref{cor-radial} follows now from Proposition \ref{prop-im}.
\end{proof}

\begin{theorem}\label{thm-main1}
Assume that $\phi$ is radial and satisfies  \eqref{sub-h}. Then for any two $\ell$-tuples $\mathfrak{p} = (p_1, \dots, p_\ell)$ and $\mathfrak{q} = (q_1, \dots, q_\ell)$ of distinct points in $\C$, we have
\begin{align}\label{com-exp}
\E_{\PP_{\Pi}^{\mathfrak{q}}} [\Gamma_{\mathfrak{p}, \mathfrak{q}}] = \int\limits_{\Conf(\C)} \Gamma_{\mathfrak{p}, \mathfrak{q}}(\X) d \PP_{\Pi}^{\mathfrak{q}} (\X) =    \frac{ \Det_{i, j =1}^\ell (\Pi(p_i, p_j))}{ \Det_{i, j =1}^\ell(\Pi(q_i, q_j))}
\displaystyle \prod\limits_{1 \leqslant i< j \leqslant \ell} \left|\frac{q_i- q_j}{ p_i- p_j}\right|^2.
\end{align}
\end{theorem}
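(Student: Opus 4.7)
My plan is to prove Theorem \ref{thm-main1} by finite-dimensional approximation: establish an exact version of \eqref{com-exp} at finite rank via a Selberg-type cancellation, and then pass to the limit $N\to\infty$ with a delicate tail estimate. Since $\phi$ is radial, the monomials $\{z^k\}_{k\ge 0}$ are mutually orthogonal in $\mathscr{F}_\phi$; let $\Pi_N$ denote the orthogonal projection onto polynomials of degree $\le N-1$, so that $\Pi_N(z,w)\to\Pi(z,w)$ pointwise and $\PP_{\Pi_N}$ is the $N$-point orthogonal polynomial ensemble with joint density proportional to $\prod_{i<j}|z_i-z_j|^2\prod_i e^{-2\phi(z_i)}$ on Lebesgue measure.

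At rank $N$, the reduced Palm measure $\PP_{\Pi_N}^{\mathfrak{q}}$ is the $(N-\ell)$-point process with density proportional to $\prod_{i<j\le N-\ell}|x_i-x_j|^2\prod_{i,k}|x_i-q_k|^2\prod_i e^{-2\phi(x_i)}$. Multiplying by $\Gamma_{\mathfrak{p},\mathfrak{q}}(\X)=\prod_x\prod_k|(x-p_k)/(x-q_k)|^2$ (a finite product on the support of $\PP_{\Pi_N}^{\mathfrak{q}}$) replaces each factor $|x_i-q_k|^2$ by $|x_i-p_k|^2$, so $\E_{\PP_{\Pi_N}^{\mathfrak{q}}}[\Gamma_{\mathfrak{p},\mathfrak{q}}]$ collapses to the ratio of two normalization constants, one at $\mathfrak{p}$ and one at $\mathfrak{q}$. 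Each of these normalizers can in turn be related to the $\ell$-point correlation function $\rho_\ell(\mathfrak{r})=\det_{i,j=1}^\ell\Pi_N(r_i,r_j)$ by fixing $\ell$ of the variables at $\mathfrak{r}$ in the Vandermonde factorization of $\det(\Pi_N(y_i,y_j))_{i,j=1}^N$. Dividing, the common unknown constant cancels, yielding the identity \eqref{com-exp} exactly at rank $N$, with $\Pi$ replaced by $\Pi_N$. The right-hand side converges to the right-hand side of \eqref{com-exp} as $N\to\infty$ by pointwise convergence $\Pi_N\to\Pi$.

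What remains, and what I expect to be the main obstacle, is to show $\E_{\PP_{\Pi_N}^{\mathfrak{q}}}[\Gamma_{\mathfrak{p},\mathfrak{q}}]\to\E_{\PP_\Pi^{\mathfrak{q}}}[\Gamma_{\mathfrak{p},\mathfrak{q}}]$. The difficulty is that in the limiting process $\Gamma_{\mathfrak{p},\mathfrak{q}}$ is defined only as a conditionally convergent radial limit (Corollary \ref{cor-radial}), so bare weak convergence on bounded windows is insufficient; a uniform-in-$N$ tail bound is needed. I would split at a truncation radius $R$: the piece over $|x|\le R$ converges by weak convergence of $\PP_{\Pi_N}^{\mathfrak{q}}$ to $\PP_\Pi^{\mathfrak{q}}$, while the tail over $|x|>R$ must be controlled uniformly in $N$ using the Laurent expansion $\log\prod_k|1-p_k/x|^2=-\kappa(\mathfrak{p},x)+O(|x|^{-3})$ with $\kappa$ as in \eqref{def-kappa-g}. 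The radial symmetry of the one-point correlation $\Pi_N(z,z)$ makes the $\kappa$-contributions vanish identically under angular integration on every annulus (every Laurent mode $e^{\pm i\theta},e^{\pm 2i\theta}$ is annihilated); the non-radial rank-$\ell$ correction arising from the passage $\Pi_N\mapsto\Pi_N^{\mathfrak{q}}$ is integrable in $|z|$ because $\Pi_N(z,q)$ decays, and the $O(|x|^{-3})$ Laurent remainder is absolutely integrable against $\Pi_N(z,z)$, which is uniformly bounded in $N$ by Bergman-kernel estimates available under $m\le\Delta\phi\le M$. This balance of delicate angular cancellations against the conditionally convergent tail is exactly the content of Lemma \ref{lem-E4}, and with it in hand the limit passage closes the argument.
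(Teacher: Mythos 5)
Your overall strategy coincides with the paper's: finite\-/dimensional radial truncation $\Pi_n$, exact evaluation of $\E_{\PP_{\Pi_n}^{\mathfrak{q}}}[\Gamma_{\mathfrak p,\mathfrak q}]$ as a ratio of normalization constants / $\ell$-point correlations (this is precisely the paper's Proposition~\ref{prop-finite}), and then a limit passage $n\to\infty$. You also correctly flag that the limit passage is the hard part, and you correctly identify the role of radial symmetry (the $\kappa$-contributions annihilate under angular integration) and of Lemma~\ref{lem-E4}. So the \emph{plan} is right and matches the paper.

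Where the proposal has a genuine gap is in the mechanism for the limit passage. You propose to split $\E_{\PP_{\Pi_n}^{\mathfrak q}}[\Gamma_{\mathfrak p,\mathfrak q}]$ into ``bounded window $|x|\le R$ by weak convergence'' plus ``tail $|x|>R$ controlled by the Laurent remainder.'' But $\Gamma_{\mathfrak p,\mathfrak q}$ is a product over \emph{all} particles, and the expectation of a product does not split into a bounded piece plus a tail piece in this way; a cross term $\E[\prod_{|x|\le R}(1+h(x))\cdot(\prod_{|x|>R}(1+h(x))-1)]$ remains, and your uniform\-/in\-/$n$ estimate for the ``tail'' as you describe it only addresses what is in effect the first cumulant $\int h\,\Pi_n(z,z)\,d\lambda_\phi$, not the whole multiplicative functional. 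This is exactly where the paper introduces its main technical novelty relative to the real\-/line case: the Fredholm determinant $\Det(1+T_{n,R})$ is factorized into the von Neumann--Schatten $\mathscr{C}_3$-regularized determinant $\Det_3(1+T_{n,R})$ (Proposition~\ref{prop-factor}), whose continuity in $\mathscr{C}_3$ (Lemma~\ref{lem-S3}, Proposition~\ref{prop-order-det}) handles all higher\-/order contributions, times the explicit regularization factor $\exp\bigl(\tr T_{n,R}-\tfrac12\tr T_{n,R}^2\bigr)$, which is in turn decomposed into the four terms $E_1,\dots,E_4$ (pole neighbourhoods, main part, Palm correction, and commutator at infinity) and controlled in Lemmata~\ref{lem-E1}--\ref{lem-E4}. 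Your sketch covers only what corresponds roughly to $E_2$ and (in spirit) $E_4$, omits $E_1$ and $E_3$ entirely, and more importantly omits the $\Det_3/\mathscr{C}_3$ factorization without which the uniform\-/in\-/$n$ estimate cannot be assembled into a statement about $\E[\Gamma_{\mathfrak p,\mathfrak q}]$ itself. The paper explicitly highlights the necessity of working in $\mathscr{C}_3$ rather than the Hilbert--Schmidt class as the extra effort required in the complex case; that is the missing ingredient here.

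Concretely, to make your plan rigorous you would need: (a) a factorization of $\E_{\PP_{\Pi_n}^{\mathfrak q}}$ of the truncated multiplicative functional into a $\mathscr{C}_3$-regularized Fredholm determinant and an explicit exponential, as in Proposition~\ref{prop-factor}; (b) a proof that $T\in\mathscr{C}_3$ and that the relevant operators converge in $\mathscr{C}_3$, as in Lemma~\ref{lem-S3} and Proposition~\ref{prop-order-det}; and (c) the full decomposition $I(n,R)=\sum_{i=1}^4 E_i(n,R;r)$ with uniform estimates on each, not only the commutator term $E_4$.
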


{\bf {Remark}.}
Osada and Shirai \cite{Osada-Shirai} obtained the results in Corollary \ref{cor-radial} and Theorem  \ref{thm-main1} for the special case $\phi(z) = | z|^2$ (corresponding to the standard Ginibre point process).

\begin{definition}
Define  a positive function $\rho_\Pi: \C \rightarrow \R$ by
\begin{align}\label{exp-formula}
\int\limits_{\Conf(\C)} \Psi_{p, q}(\X) d \PP_{\Pi}^q (\X)  =     \frac{\rho_\Pi(q) }{\rho_\Pi(p)} \frac{\Pi(p, p)}{\Pi(q, q)}.
\end{align}
\end{definition}

In particular, for any $p, q \in \C$,  we have
\[
\frac{d \PP_{\Pi}^p}{ d \PP_{\Pi}^q} (\X) =   \frac{\rho_\Pi(p)}{\rho_\Pi(q)}   \frac{\Pi(q,q)}{\Pi(p,p)} \Psi_{p, q}(\X).
\]

\begin{theorem}\label{cond-general}
Let $B\subset \C$ be a bounded set.
For $\PP_{\Pi}$-almost every $\X\in
\Conf(\C)$, the  measure $\PP_{\Pi}(\cdot | \X; \C \setminus B)$ has the form
\begin{equation}\label{cond-la-general}
Z(B,\X)^{-1} \prod\limits_{1\le i<j  \le \#_B(\X)} |z_i-z_j|^2 \prod \limits_{i=1}^{\#_B(\X)}\rho_{B,\X}(z_i)d\lambda_{\phi}(z_i),
\end{equation}
where  $\#_B(\X)$ stand for the number of particles of $\X$ lying in $B$ (which is measurable with respect to $\X|_{{\mathbb C}\setminus B}$) ;    $Z(B, \X)$ is the normalization constant
and the function $\rho_{B, \X}$ satisfies, for any $p,q\in B$, the relation
\begin{equation}\label{rhopix-la}
\frac{\rho_{B, \X}(p)}{\rho_{B, \X}(q)}=\frac{\rho_\Pi(p)}{\rho_\Pi(q)}
{\Psi}_{p,q}(\X|_{{\mathbb C}\setminus B}).
\end{equation}
\end{theorem}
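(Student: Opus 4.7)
The plan is to apply the $N$-th order Palm--Campbell formula and reduce the problem to the quasi-invariance of Palm measures provided by Proposition~\ref{prop-im}. For any bounded symmetric measurable $F$ on $B^N \times \Conf(\C \setminus B)$, the defining property of the reduced Palm measure yields
\begin{equation*}
\E_{\PP_\Pi}\bigl[\ch_{\{\#_B = N\}} F(\X \cap B,\, \X|_{\C \setminus B})\bigr] = \frac{1}{N!}\int_{B^N}\det[\Pi(q_i, q_j)]\,\E_{\PP_\Pi^{\mathfrak q}}\bigl[\ch_{\{\X|_B = \emptyset\}}\, F(\mathfrak q,\, \X|_{\C \setminus B})\bigr]\,d\lambda_\phi^{\otimes N}(\mathfrak q).
\end{equation*}
Since $\#_B$ is $\mathcal F_{\C \setminus B}$-measurable (the rigidity property asserted in the theorem and established for generalized Fock processes in \cite{QB3}), disintegration along $\X \mapsto \X|_{\C\setminus B}$ shows that, for $\PP_\Pi$-a.e.\ $\eta := \X|_{\C \setminus B}$ with $N := \#_B(\X)$, the conditional density $p(\mathfrak p \mid \eta)$ on $B^N$ (against $d\lambda_\phi^{\otimes N}/N!$) satisfies
\begin{equation*}
\frac{p(\mathfrak p \mid \eta)}{p(\mathfrak q \mid \eta)} = \frac{\det[\Pi(p_i, p_j)]}{\det[\Pi(q_i, q_j)]} \cdot \frac{d\PP_\Pi^{\mathfrak p}}{d\PP_\Pi^{\mathfrak q}}(\eta)
\end{equation*}
for any distinct-point $N$-tuples $\mathfrak p, \mathfrak q \in B^N$, and Proposition~\ref{prop-im}(ii) evaluates the Palm quotient as $\Psi_{\mathfrak p, \mathfrak q}(\eta)/\int \Psi_{\mathfrak p, \mathfrak q}\,d\PP_\Pi^{\mathfrak q}$.

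Two further structural observations then produce the OP-ensemble form. From the definition \eqref{general-tuple}, both the exponential cutoff (via \eqref{def-kappa-g}) and the product over particles are multiplicative in the individual coordinates of $\mathfrak p$ and $\mathfrak q$, so $\Psi_{\mathfrak p, \mathfrak q} = \prod_{i=1}^N \Psi_{p_i, q_i}$, and the cocycle $\Psi_{p_i, q_i} = \Psi_{p_i, q_0}/\Psi_{q_i, q_0}$ for a fixed reference $q_0 \in B$ isolates the $\mathfrak p$-dependence of $\Psi$ in $\prod_i \Psi_{p_i, q_0}(\eta)$. Second, because $\Pi$ is the reproducing kernel of a Hilbert space of entire functions, $\det[\Pi(p_i, p_j)]$ is holomorphic in each $p_i$, anti-holomorphic in each $\bar p_i$, and vanishes whenever two of the $p_i$ coincide (two equal rows), so $\prod_{1 \le i < j \le N}|p_i - p_j|^2$ divides it smoothly. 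Setting $\rho_{B, \X}(p) := \rho_\Pi(p)\,\Psi_{p, q_0}(\eta)$, relation \eqref{rhopix-la} follows at once from the defining equation \eqref{exp-formula} of $\rho_\Pi$ and the cocycle, and the conditional density acquires the form \eqref{cond-la-general} precisely when the multi-point identity
\begin{equation*}
\int \Psi_{\mathfrak p, \mathfrak q}\,d\PP_\Pi^{\mathfrak q} \;=\; \frac{\det[\Pi(p_i, p_j)]}{\det[\Pi(q_i, q_j)]}\,\prod_{1 \le i < j \le N}\frac{|q_i - q_j|^2}{|p_i - p_j|^2}\,\prod_{i=1}^N \frac{\rho_\Pi(q_i)}{\rho_\Pi(p_i)}
\end{equation*}
holds for every pair of distinct-point $N$-tuples in $\C$.

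The hard part is this multi-point identity, which simultaneously extends \eqref{exp-formula} (the $N = 1$ case) and Theorem~\ref{thm-main1} (the radial case) to general non-radial weights. I would prove it by induction on $N$, using the Shirai--Takahashi identity $\PP_\Pi^{\mathfrak q} = \PP_{\Pi^{\mathfrak q}}$ to peel off one coordinate of $\mathfrak q$ at a time and apply the $\ell = 1$ defining identity to the successive Palm-reduced projection kernels $\Pi^{q_1},\, \Pi^{q_1, q_2},\ldots$. The delicate step is that $\Psi$ is built from the original $\Pi$ rather than from the successive $\Pi^{(q_1, \ldots, q_k)}$: the ratio of the two versions is an explicit exponential involving $\int(\kappa(p, z) - \kappa(q, z))\,|\Pi(z, q_k)|^2/\Pi(q_k, q_k)\,d\lambda_\phi(z)$ in the cutoff integrand, and these corrections must telescope so that the successive one-point factors $\rho_{\Pi^{(q_1, \ldots, q_{k-1})}}(q_k)/\Pi^{(q_1, \ldots, q_{k-1})}(q_k, q_k)$ reassemble into the Vandermonde ratio $\prod|q_i - q_j|^2/\prod|p_i - p_j|^2$ and the determinantal quotient above. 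This is the multivariate analogue of the conditionally-convergent estimates of Lemma~\ref{lem-E4} and carries the bulk of the technical work.
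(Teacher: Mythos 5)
Your outline recovers the same broad strategy as the paper: rigidity, the Palm--Campbell disintegration formula that reduces the conditional density on $B^N$ to $\frac{d\PP^{\mathfrak{p}}}{d\PP^{\mathfrak{q}}}(\eta)\det[\Pi(p_i,p_j)]$ (the paper gets this form \eqref{cond-expl} by citing Proposition~3.1 of \cite{buf-cond}), and then the explicit Palm ratio of Proposition~\ref{prop-im}(ii). Your observations that $\Psi_{\mathfrak{p},\mathfrak{q}}=\prod_i\Psi_{p_i,q_i}$ up to an $\X$-independent constant, that the cocycle isolates the $\mathfrak{p}$-dependence in $\prod_i\Psi_{p_i,q_0}(\eta)$, and that \eqref{rhopix-la} then drops out of the defining equation \eqref{exp-formula} for $\rho_\Pi$ once the OP form is known, all agree with the paper.

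The divergence is in how the OP form itself is obtained, and this is where your proposal has a real gap. You correctly identify that, on top of Proposition~\ref{prop-im}(ii), the conditional density acquires the Vandermonde-times-product shape \eqref{cond-la-general} if and only if the normalization integral $\int\Psi_{\mathfrak{p},\mathfrak{q}}\,d\PP^{\mathfrak{q}}$ satisfies the multi-point identity you write down; the remark that the Vandermonde divides $\det[\Pi(p_i,p_j)]$ because the determinant vanishes on diagonals is true but by itself yields nothing like a product $\prod_ig(p_i)$, so it does not circumvent this identity. In the paper, that identity is only proved for radial $\phi$, as Theorem~\ref{thm-main1}, by a lengthy finite-dimensional approximation with $\mathscr{C}_3$ regularization (Lemmata~\ref{lem-E1}--\ref{lem-E4}), and it is explicitly used for Corollary~\ref{cond-radial}, not re-proved for general $\phi$; the passage to the OP ensemble for Theorem~\ref{cond-general} itself is routed entirely through Proposition~3.1 of \cite{buf-cond}, which does the corresponding work abstractly. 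Your proposal bypasses that external input and instead tries to establish the multi-point identity for arbitrary (non-radial) $\phi$ by induction on $N$, peeling off one $q_k$ at a time via Shirai--Takahashi and tracking the mismatch between $\Psi$ built from $\Pi$ and from the successive $\Pi^{(q_1,\ldots,q_k)}$. That step is precisely the hard analytical content --- it is a non-radial analogue of Theorem~\ref{thm-main1}, and you acknowledge it ``carries the bulk of the technical work'' --- but you give only a sketch and no estimates. As written, this leaves the crux of Theorem~\ref{cond-general} unproved; to close the gap you must either carry out that induction with the conditionally-convergent cutoff corrections (which is a substantial project, not a routine telescoping), or else invoke, as the paper does, the companion result of \cite{buf-cond} that already delivers the orthogonal-polynomial-ensemble form of the conditional measure.
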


An especially simple expression is obtained for radially-symmetric weights.

\begin{corollary}\label{cond-radial}
Assume that $\phi$ is radial and that $B$ contains the origin $0$. Then for $\PP_{\Pi}$-almost every $\X\in
\Conf(\C)$, the  measure $\PP_{\Pi}(\cdot | \X; \C \setminus B)$ has the form
\begin{equation}\label{expform-cond-la}
Z(B, \X)^{-1} \prod\limits_{1\leq i<j\leq \#_B(\X)} |z_i-z_j|^2 \prod \limits_{i=1}^{\#_B(\X)}   {\Gamma}_{z_i, 0}(\X|_{{\mathbb C}\setminus B})d\lambda_{\phi}(z_i),
\end{equation}
where the functions ${\Gamma}_{z_i, 0}$ are defined by (\ref{gammapzero}).
\end{corollary}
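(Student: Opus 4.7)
The plan is to reduce the statement to Theorem \ref{cond-general} by specializing the reference point $q$ to the origin, which belongs to $B$, and then identifying the ratio $\rho_{B,\X}(p)/\rho_{B,\X}(0)$ in closed form using the radial hypothesis. The key input is that, when $\phi$ is radial, the regularizing prefactor in \eqref{general-tuple} disappears, so that $\Psi_{\mathfrak{p},\mathfrak{q}}$ and $\Gamma_{\mathfrak{p},\mathfrak{q}}$ coincide, as already noted in the proof of Corollary \ref{cor-radial}.

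First, I would apply Theorem \ref{cond-general} directly: for $\PP_\Pi$-almost every $\X$, the conditional measure $\PP_\Pi(\cdot\,|\,\X;\C\setminus B)$ is the orthogonal polynomial ensemble
\begin{equation*}
Z(B,\X)^{-1}\prod_{1\le i<j\le \#_B(\X)}|z_i-z_j|^2\prod_{i=1}^{\#_B(\X)}\rho_{B,\X}(z_i)\,d\lambda_\phi(z_i),
\end{equation*}
with $\rho_{B,\X}$ satisfying the ratio identity \eqref{rhopix-la}. Since $0\in B$, I may set $q=0$ in \eqref{rhopix-la} and obtain
\begin{equation*}
\rho_{B,\X}(p)=\rho_{B,\X}(0)\,\frac{\rho_\Pi(p)}{\rho_\Pi(0)}\,\Psi_{p,0}(\X|_{\C\setminus B}),
\qquad p\in B.
\end{equation*}

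Next, I would replace $\Psi_{p,0}$ by $\Gamma_{p,0}$. Under the radial hypothesis, the argument of the proof of Corollary \ref{cor-radial} shows that the integral $\int_{r\le|z|\le R}\kappa(p,z)\Pi(z,z)\,d\lambda_\phi(z)$ vanishes for every $0<r<R$, so the exponential prefactor in \eqref{general-tuple} is identically $1$ and $\Psi_{p,0}=\Gamma_{p,0}$.

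The remaining task is to show that $p\mapsto\rho_\Pi(p)$ is constant in the radial case, so that the factor $\rho_\Pi(p)/\rho_\Pi(0)$ can be absorbed into the normalization $Z(B,\X)$. This follows from comparing the defining identity \eqref{exp-formula} with Theorem \ref{thm-main1} applied for $\ell=1$: since $\Psi_{p,q}=\Gamma_{p,q}$ in the radial case, the left-hand side of \eqref{exp-formula} equals $\Pi(p,p)/\Pi(q,q)$, forcing $\rho_\Pi(p)=\rho_\Pi(q)$ for all $p,q\in\C$. Substituting back, $\rho_{B,\X}(p)=\rho_{B,\X}(0)\,\Gamma_{p,0}(\X|_{\C\setminus B})$, and absorbing the $\X$-dependent but $p$-independent factor $\rho_{B,\X}(0)^{\#_B(\X)}$ into $Z(B,\X)$ yields \eqref{expform-cond-la}.

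The only non-routine point is the identification of $\rho_\Pi$ as a constant, and this is immediate once Theorem \ref{thm-main1} is available; no further analysis is required. I do not anticipate any genuine obstacle, since the radial symmetry simultaneously kills the regularization term and makes the normalizing constants in Theorem \ref{thm-main1} independent of the base point, which is exactly what is needed.
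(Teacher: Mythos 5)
Your argument is correct and follows the route the paper itself intends: specialize Theorem \ref{cond-general} with $q=0\in B$, use Corollary \ref{cor-radial} to identify $\Psi_{p,0}$ with $\Gamma_{p,0}$ in the radial case, and then invoke Theorem \ref{thm-main1} with $\ell=1$ to deduce from \eqref{exp-formula} that $\rho_\Pi$ is constant, after which the $z$-independent factor $\rho_{B,\X}(0)^{\#_B(\X)}$ is absorbed into $Z(B,\X)$. This is precisely what the paper's introduction announces (``the main part of the proof of Corollary \ref{cond-radial} is the explicit computation of the normalization constants in the radial case, given in Theorem \ref{thm-main1}''), so no further comment is needed.
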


\begin{remark}
Formulae \eqref{cond-la-general} and \eqref{expform-cond-la} can be viewed as the analogue of the Dobrushin-Lanford-Ruelle equation in our situation. We are deeply grateful to the anonymous referee for suggesting to add this remark.
\end{remark}

\subsection{Derivation  of Theorems \ref{cond-general} and Corollary \ref{cond-radial} from Proposition \ref{prop-im}.}
 
We now recall, for our particular case, the Ghosh and Peres \cite{Ghosh-sine}, \cite{Ghosh-rigid} definition of rigidity (see  Holroyd-Soo \cite{hsoo}  and \cite{Buf-rigid}, \cite{BDQ} for further background and results on rigidity of point processes). Given a Borel subset $W$ of $\C$, write $\mathcal{F}_W^{\PP}$ for the $\PP$-completion of $\mathcal{F}_W$. A point process $\PP$ on $\C$ is  {\it rigid} if for any bounded Borel  subset $B \subset \C$ the function $\#_B$ is  $\mathcal{F}_{\C \backslash B}^{\PP}$-measurable.  As established in \cite{QB3}, Proposition 1.2,  our point process $\PP_\Pi$ is rigid in the sense of Ghosh and Peres.
For a subset $B\subset \C$ and a natural number $\ell$, we write $\Conf_\ell(B)$ for the space of $\ell$-particle configurations on $B$; in other words, the space of all subsets of $B$ of cardinality $\ell$.
Rigidity implies that for  any precompact Borel set $B\subset \C$ and $\PP$-almost any $\X$ the conditional measure
$\PP(\cdot |\X; \C\setminus B)$ is supported on the subset $\Conf_\ell(B)$, where $\ell=\#_B(\X)$.

Next, we use the characterization of conditional measures in terms of Radon-Nikodym derivatives of Palm measures of the same order established in Proposition 3.1 in \cite{buf-cond}.
Together with rigidity,  Proposition 3.1 in \cite{buf-cond} implies that, for our point processes,
the conditional measure $\mathbb{P}(\cdot | \X; W)$ has the form
\begin{equation}\label{cond-expl}
Z^{-1}(q_1, \dots, q_\ell)\frac{d\PP^{p_1, \dots, p_\ell}}{d\PP^{q_1, \dots, q_\ell}}
\left( \X|_W\right) d{\rho}_\ell(p_1, \dots, p_\ell),
\end{equation}
where $q_1, \dots, q_\ell$ is almost any fixed $\ell$-tuple, $\rho_\ell$ is the $\ell$-th correlation measure of $\PP$  and $Z(q_1, \dots, q_\ell)$ is the normalization constant.

Item (ii) of Proposition \ref{prop-im} gives precisely the explicit expression for Radon-Nikodym derivatives of Palm measures of the same order, and, consequently, Proposition \ref{prop-im} immediately implies Theorem \ref{cond-general}, Corollary \ref{cond-radial}.
We proceed to the proof of Proposition \ref{prop-im}.

%


\section{Regularized multiplicative functionals}

We first collect some results from \cite{QB3}. 
\begin{proposition}\label{prop-ref}
Let $\mathfrak{p} = (p_1, \dots, p_\ell)$ and $\mathfrak{q} = (q_1, \dots, q_\ell)$ be two $\ell$-tuples of distinct points in $\C$. Then
\begin{itemize}
\item[(i)] The limit
\begin{align}\label{ref-tuple}
\widetilde{\Psi}_{\mathfrak{p}, \mathfrak{q}}(\X)  =  \lim_{R\to\infty} \exp\Big(  -  2  \sum_{i =1}^\ell \int\limits_{|z|\le R}    \log\Big |  \frac{z-p_i}{ z- q_i} \Big| \Pi^{\mathfrak{q}}(z,z)  d\lambda_\phi(z) \Big)  \prod_{x\in\X: | x| \le R}   \prod_{i= 1}^\ell\Big| \frac{x-p_i}{x- q_i }\Big|^2
\end{align}
exists in $L^1(\Conf(\C), \PP_{\Pi}^{\mathfrak{q}})$.
\item[(ii)]  The Palm measures $\PP_{\Pi}^{\mathfrak{p}}$ and $\PP_{\Pi}^{\mathfrak{q}}$ are in the same measure class. The Radon-Nikodym derivative $d \PP_{\Pi}^{\mathfrak{p}}/ d \PP_{\Pi}^{\mathfrak{q}}$ is given by
\begin{align*}
\frac{d \PP_{\Pi}^{\mathfrak{p}}}{ d \PP_{\Pi}^{\mathfrak{q}} } (\X) = \frac{\widetilde{\Psi}_{\mathfrak{p}, \mathfrak{q}} (\X) }{ \displaystyle{  \int\limits_{\Conf(\C)} \widetilde{\Psi}_{\mathfrak{p}, \mathfrak{q}} d \PP_{\Pi}^{\mathfrak{q}}}}.
\end{align*}
\end{itemize}
\end{proposition}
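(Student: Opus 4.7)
The plan is to reduce the statement to a regularised Fredholm-type determinant computation under the Palm-shifted determinantal measure. By the Shirai--Takahashi identity $\PP_\Pi^{\mathfrak{q}}=\PP_{\Pi^{\mathfrak{q}}}$, it suffices to analyse the multiplicative functional $\Psi_g(\X)=\prod_{x\in\X}g(x)$ with $g(z)=\prod_{i=1}^{\ell}|(z-p_i)/(z-q_i)|^{2}$ under the determinantal measure with kernel $\Pi^{\mathfrak{q}}$. Truncating to $|x|\le R$ replaces $g$ by $g_{R}=g\mathbf{1}_{B_{R}}+\mathbf{1}_{B_{R}^{c}}$; for each finite $R$ the multiplication operator $(g_{R}-1)$ is bounded with compact support, so $(g_{R}-1)\Pi^{\mathfrak{q}}$ is trace class and $\E_{\PP_{\Pi^{\mathfrak{q}}}}[\Psi_{g_{R}}]=\det\bigl(I+(g_{R}-1)\Pi^{\mathfrak{q}}\bigr)$.

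To pass to $R\to\infty$, I would exploit that $g(z)-1=-2\sum_i\Re((p_i-q_i)/z)+O(1/|z|^{2})$, while assumption \eqref{sub-h} gives $\Pi(z,z)\asymp 1$. Hence $(g-1)\Pi^{\mathfrak{q}}$ is neither trace class nor Hilbert--Schmidt in general, but a direct kernel estimate places it in the Schatten class $\mathscr{C}_{3}$. The natural object is therefore the Carleman-type determinant $\det_{3}$, and the identity
\[
\det(I+T)=\det_{3}(I+T)\exp\bigl(\tr(T)-\tfrac{1}{2}\tr(T^{2})\bigr),
\]
valid for trace-class $T$, applied to $T_{R}=(g_{R}-1)\Pi^{\mathfrak{q}}$, produces linear and quadratic trace counter-terms. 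Expanding $\log g(z)=-2\sum_{i}\log|1-p_{i}/z|+2\sum_{i}\log|1-q_{i}/z|$ to an appropriate order in $1/z$ shows that the linear trace is, modulo a remainder which is absolutely integrable, exactly the counter-term $-2\sum_{i}\int_{|z|\le R}\log|(z-p_{i})/(z-q_{i})|\,\Pi^{\mathfrak{q}}(z,z)\,d\lambda_{\phi}(z)$ appearing in \eqref{ref-tuple}. Proving $T_{R_n}\to T$ in $\mathscr{C}_3$ along a suitable subsequence and using the continuity of $\det_{3}$ on $\mathscr{C}_{3}$ then transfers to convergence of $\det_{3}(I+T_{R_{n}})$, which combined with the counter-term cancellation gives the $L^{1}$-limit asserted in (i).

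For (ii) I would identify the limit as the Radon--Nikodym density. The Palm kernels $\Pi^{\mathfrak{p}}$ and $\Pi^{\mathfrak{q}}$ are the orthogonal projections of $\mathscr{F}_{\phi}$ onto the codimension-$\ell$ subspaces of holomorphic functions vanishing on $\mathfrak{p}$, respectively on $\mathfrak{q}$. These two subspaces are related by multiplication by the rational function $\prod_{i}(z-p_{i})/(z-q_{i})$, whose squared modulus is exactly $g$. A standard change-of-kernel argument for determinantal measures coming from projections onto finite-codimension subspaces linked by such a deformation produces an absolutely continuous relation $d\PP_{\Pi^{\mathfrak{p}}}/d\PP_{\Pi^{\mathfrak{q}}}$ whose density, up to a normalising constant, agrees with the regularised multiplicative functional just constructed; applying Shirai--Takahashi once more to rewrite $\PP_{\Pi^{\mathfrak{p}}}=\PP_{\Pi}^{\mathfrak{p}}$ yields the asserted formula, and the normalising constant is automatic from integrating both sides against $\PP_{\Pi^{\mathfrak{q}}}$.

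The main obstacle is the $\mathscr{C}_{3}$ bookkeeping. In contrast to the real-line case treated in \cite{Buf-gibbs,buf-cond}, where a Hilbert--Schmidt analysis suffices, in the complex plane the decay $g-1=O(1/|z|)$ is too slow, forcing two layers of counter-terms whose individual integrals over annuli are only conditionally convergent. Carrying out the argument rigorously will require precise asymptotics of $\Pi^{\mathfrak{q}}(z,z)$ far from $\mathfrak{q}$ (which differs from $\Pi(z,z)$ by an explicit finite-rank correction), sharp $L^p$ bounds on the off-diagonal decay of $\Pi$, and the uniform control \eqref{close-1} on $g$ outside the ball $B_{r_{\mathfrak{p},\mathfrak{q}}}$ to localise the sensitive estimates away from the singularities of $g$.
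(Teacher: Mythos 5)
The paper proves this proposition by a one-sentence citation to \cite{QB3} (a combination of Theorem~1.1, Theorem~4.1, Lemma~7.4 and Corollary~7.13 there), so there is no internal argument here to compare against. What you sketch is a reconstruction of that cited argument, and your overall plan --- pass to $\PP_{\Pi^{\mathfrak{q}}}$ via Shirai--Takahashi, regularize the multiplicative functional through the Carleman determinant $\Det_3$ on the Schatten class $\mathscr{C}_3$, and match counter-terms --- is indeed the technique of \cite{QB3} and is the same machinery this paper deploys in Section~3. However, two steps in your sketch are wrong as written.

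First, the multiplication operator $g_R-1$ is \emph{not} bounded: $g(z)=\prod_i|(z-p_i)/(z-q_i)|^2$ has second-order poles at each $q_i$, so once $R$ is large enough that $B_R$ contains the $q_i$ the function $g_R-1$ is unbounded on its support, and the claim that $(g_R-1)\Pi^{\mathfrak{q}}$ is automatically trace class does not stand on its own. One has to exploit the cancellation coming from the second-order vanishing of $\Pi^{\mathfrak{q}}$ at each $q_i$, which is why the paper (Notation~\ref{many-notation}, Lemma~\ref{lem-trace-class}) works with the symmetrized operator $\sgn(h)\sqrt{|h|}\,\Pi^{\mathfrak{q}}\sqrt{|h|}$ rather than $(g-1)\Pi^{\mathfrak{q}}$ itself.

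Second, your claim that the linear trace $\tr(T_R)=\int_{|z|\le R}(g-1)\Pi^{\mathfrak{q}}(z,z)\,d\lambda_\phi$ agrees with the counter-term $-2\sum_i\int_{|z|\le R}\log\bigl|(z-p_i)/(z-q_i)\bigr|\Pi^{\mathfrak{q}}(z,z)\,d\lambda_\phi$ ``modulo a remainder which is absolutely integrable'' is false. The difference is
\[
\int\bigl((g-1)-\log g\bigr)\Pi^{\mathfrak{q}}(z,z)\,d\lambda_\phi(z)
= \tfrac12\int (g-1)^2\Pi^{\mathfrak{q}}(z,z)\,d\lambda_\phi(z)+\text{(convergent)},
\]
and since $g-1=O(1/|z|)$ while $\Pi^{\mathfrak{q}}(z,z)\asymp 1$ under \eqref{sub-h}, the leading term is logarithmically \emph{divergent}. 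It is only the combination $\tr(T_R)-\tfrac12\tr(T_R^2)$ that matches the logarithmic counter-term up to an $O(1/|z|^3)$, genuinely absolutely integrable, error, and establishing that $\tfrac12\tr(T_R^2)$ equals $\tfrac12\int(g-1)^2\Pi^{\mathfrak{q}}(z,z)\,d\lambda_\phi$ modulo a convergent remainder is itself a non-trivial piece of analysis: $\tr(T_R^2)$ is a double integral and its off-diagonal contribution is controlled by commutator/Hilbert--Schmidt estimates of the type in Lemma~\ref{lem-com} and the $E_4$ control of Section~3. Your sketch announces ``two layers of counter-terms'' but then attributes the cancellation entirely to the linear trace; this is precisely where the complex-plane case is harder than the real-line Hilbert--Schmidt setting, and the gap is not cosmetic.
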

\begin{proof}
Proposition \ref{prop-ref} is  a combination of \cite[Theorem 1.1, Theorem 4.1, Lemma 7.4 and Corollary 7.13]{QB3}.
\end{proof}

The following lemma will be used in the proof of Proposition \ref{prop-im}.

\begin{lemma}\label{lem-kappa}
We have
\begin{align*}
\log  \prod_{i= 1}^\ell\Big| \frac{z-p_i}{z- q_i }\Big|^2  = -  (\kappa(\mathfrak{p}, z) - \kappa(\mathfrak{q}, z)) + \mathcal{O}(1/|z|^3) \as | z| \to \infty,
\end{align*}
where the estimate $\mathcal{O}(1/|z|^3)$ is uniform as long as $p_1\cdots, p_\ell, q_1, \cdots, q_\ell$ range over a bounded subset $K\subset \C$.
\end{lemma}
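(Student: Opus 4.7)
The plan is to reduce the statement to a one-point Taylor expansion of $\log|1-w|^2$ and then sum. For any point $p \in \C$ and $z$ with $|z|$ large compared to $|p|$, I would write
\[
\log\Big| \frac{z-p}{z} \Big|^2 \;=\; \log|1 - p/z|^2 \;=\; \log(1-p/z) + \log(1-\bar p/\bar z),
\]
using the principal branch of the logarithm (which is valid as soon as $|p/z|<1$). Both $p/z$ and $\bar p/\bar z$ tend to $0$ uniformly for $p$ in a bounded set as $|z|\to\infty$, so for $|z|$ large enough (depending only on $K$), both arguments lie in, say, the disk $\{|w|<1/2\}$.

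Next I would invoke the Taylor expansion $\log(1-w) = -w - w^2/2 + R(w)$ with $|R(w)| \le C|w|^3$ for $|w|\le 1/2$. Applying this with $w=p/z$ and $w=\bar p/\bar z$ and adding, the linear and quadratic terms combine to give exactly
\[
-\Big( \tfrac{p}{z} + \tfrac{\bar p}{\bar z} + \tfrac{p^2}{2z^2} + \tfrac{\bar p^2}{2\bar z^2} \Big) \;=\; -\kappa(p,z),
\]
while the remainders contribute a term bounded by $2C|p|^3/|z|^3$. Thus, for each individual pair,
\[
\log\Big| \frac{z-p_i}{z-q_i} \Big|^2 \;=\; \log|1-p_i/z|^2 - \log|1-q_i/z|^2 \;=\; -\kappa(p_i,z) + \kappa(q_i,z) + \mathcal{O}(1/|z|^3),
\]
with an implicit constant depending only on $\sup_{p\in K}|p|$ (applied to $p_i$ and to $q_i$).

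Finally I would sum over $i = 1, \dots, \ell$. The leading terms assemble into $-(\kappa(\mathfrak{p},z)-\kappa(\mathfrak{q},z))$ by the very definition \eqref{def-kappa-g}, and the $2\ell$ error terms each of size $\mathcal{O}(1/|z|^3)$ combine into a single $\mathcal{O}(1/|z|^3)$ with constant depending only on $\ell$ and on the diameter of $K$, which is exactly the claimed uniformity.

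There is no real obstacle here; the only mild point is to check that the branches of the complex logarithms add consistently, which is ensured by taking $|z|$ large enough so that $p/z$ and $\bar p/\bar z$ lie in a small disk around the origin where the principal logarithm is analytic with the stated Taylor bound.
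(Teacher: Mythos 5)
Your proof is correct. You split each factor through the origin, writing
\[
\log\Big|\frac{z-p_i}{z-q_i}\Big|^2 = \log\Big|1-\frac{p_i}{z}\Big|^2 - \log\Big|1-\frac{q_i}{z}\Big|^2,
\]
and then apply the one-point Taylor expansion $\log(1-w) = -w - \tfrac{w^2}{2} + \mathcal{O}(|w|^3)$ to $w = p/z$ and $w=\bar p/\bar z$ (and likewise for $q$); the quadratic truncation reproduces $-\kappa(p,z)$ exactly by the definition \eqref{def-kappa}, and the remainders are uniformly $\mathcal{O}(1/|z|^3)$ once $|z|$ exceeds a multiple of $\sup_{K}|\cdot|$. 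This is genuinely a different decomposition from the paper's: the paper expands the ratio directly, writing $\frac{z-p_i}{z-q_i} = 1 - \frac{p_i-q_i}{z-q_i}$ and Taylor-expanding in the single variable $\frac{p_i-q_i}{z-q_i}$, which then requires an extra algebraic step (verifying that a certain polynomial $P_i$ has degree $\le 1$, yielding the identity \eqref{cocyle-sim}) to match the resulting quadratic truncation with $\kappa(p_i,z)-\kappa(q_i,z)$. Your route avoids that algebraic check and is cleaner for proving the lemma as stated; the paper's route has the side benefit that the intermediate identity \eqref{cocyle-sim} is recorded and reused later in the proof of Lemma \ref{lem-h-k}, which your argument would not produce as a byproduct. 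Both are uniform over $K$ for the same reason: the implied constants depend only on $\sup_K|\cdot|$ and $\ell$.
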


\begin{proof}
Using  \eqref{close-1}, we have
\begin{align*}
& \log  \prod_{i= 1}^\ell\Big| \frac{z-p_i}{z- q_i }\Big|^2   =  \sum_{i = 1}^\ell \log \Big( 1 - \frac{p_i- q_i}{z - q_i}\Big) +  \sum_{i = 1}^\ell \log \Big( 1 - \frac{\bar{p_i}- \bar{q_i}}{\bar{z} -\bar{ q_i}}\Big)
\\
& =-  \sum_{i = 1}^\ell  \frac{p_i - q_i}{z - q_i} -  \sum_{i = 1}^\ell \frac{1}{2}   \frac{(p_i - q_i)^2}{(z - q_i)^2} -    \sum_{i = 1}^\ell  \frac{\bar{p_i} - \bar{q_i}}{\bar{z} -  \bar{q_i}}-   \sum_{i = 1}^\ell \frac{1}{2}   \frac{(\bar{p_i} - \bar{q_i})^2}{(\bar{z} - \bar{q_i})^2} + \mathcal{O}(1/|z|^3) \as |z| \to\infty.
\end{align*}
Now write
\begin{align*}
\frac{p_i - q_i}{z - q_i} +  \frac{1}{2}   \frac{(p_i - q_i)^2}{(z - q_i)^2} - \Big( \frac{p_i - q_i }{z} + \frac{p_i^2 - q_i^2}{2 z^2}\Big) & =  \frac{(p_i - q_i) q_i}{z(z-q_i)} + \frac{(p_i - q_i)^2  z^2 - (p_i^2 - q_i^2) (z-q_i)^2}{2 z^2 (z-q_i)^2}
\\
& = \frac{(p_i - q_i) P_i(z)}{ z^2 (z-q_i)^2},
\end{align*}
where $P_i$ is a polynomial of degree at most $2$. The coefficient $z^2[P_i]$ of $z^2$ in $P_i$ is given by
\[
z^2[P_i] = q_i  + \frac{p_i - q_i- p_i - q_i  }{2} =0.
\]
It follows that $\deg P_i \le 1$ and
\[
\frac{(p_i - q_i) P_i(z)}{ z^2 (z-q_i)^2} = \mathcal{O}(1/|z|^3) \text{ as $|z| \to\infty$}.
\]
Hence for any $1\le i \le \ell$, we have
\begin{align}\label{cocyle-sim}
\frac{p_i - q_i}{z - q_i} +  \frac{1}{2}   \frac{(p_i - q_i)^2}{(z - q_i)^2} = \frac{p_i - q_i }{z} + \frac{p_i^2 - q_i^2}{2 z^2} +  \mathcal{O}(1/|z|^3) \as |z| \to\infty,
\end{align}
and Lemma \ref{lem-kappa} follows.
\end{proof}

\begin{lemma}\label{lem-o3}
Recall the choice of $r_{\mathfrak{p}, \mathfrak{q}}$ in \eqref{close-1}.  We have
\[
\int\limits_{|z| \ge  r_{\mathfrak{p}, \mathfrak{q}} } \frac{1}{|z|^3} \Pi (z,z)d\lambda_\phi(z) < \infty \an \int\limits_{|z| \ge  r_{\mathfrak{p}, \mathfrak{q}} } \frac{1}{|z|^3} \Pi^{\mathfrak{q}}(z,z)d\lambda_\phi(z) < \infty.
\]
\end{lemma}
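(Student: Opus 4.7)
The plan is to reduce both statements to the classical pointwise bound on the Bergman kernel of the generalized Fock space $\mathscr{F}_\phi$ under the hypothesis \eqref{sub-h}, namely that there exists a constant $C = C(m,M)$ such that
\[
\Pi(z,z) e^{-2\phi(z)} \le C \quad \text{for all } z \in \C.
\]
This is a standard estimate due to Christ (and in various forms to Marzo--Ortega-Cerd\`a, Delin, etc.) valid precisely because $\Delta\phi$ is bounded above and below by positive constants.

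Given this bound, the first integral is immediate. Indeed, since $d\lambda_\phi(z) = e^{-2\phi(z)} d\lambda(z)$, we have
\[
\int\limits_{|z| \ge r_{\mathfrak{p},\mathfrak{q}}} \frac{1}{|z|^3} \Pi(z,z) d\lambda_\phi(z)
= \int\limits_{|z| \ge r_{\mathfrak{p},\mathfrak{q}}} \frac{\Pi(z,z) e^{-2\phi(z)}}{|z|^3} d\lambda(z)
\le C \int\limits_{|z| \ge r_{\mathfrak{p},\mathfrak{q}}} \frac{d\lambda(z)}{|z|^3},
\]
and the last integral equals $2\pi \int_{r_{\mathfrak{p},\mathfrak{q}}}^\infty \rho^{-2} d\rho < \infty$.

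For the second integral, I would use the identity $\Pi - \Pi^{\mathfrak{q}}  = \sum_{i=1}^\ell \Pi^{(q_1,\ldots,q_{i-1})}(\cdot, q_i) \Pi^{(q_1,\ldots,q_{i-1})}(q_i, \cdot)/\Pi^{(q_1,\ldots,q_{i-1})}(q_i,q_i)$, which exhibits $\Pi - \Pi^{\mathfrak{q}}$ as a sum of rank-one positive operators (this is implicit in the iterative definition of $\Pi^{\mathfrak{q}}$ just before the statement of Proposition \ref{prop-im}). Hence $\Pi - \Pi^{\mathfrak{q}}$ is positive semi-definite and, evaluating on the diagonal, $0 \le \Pi^{\mathfrak{q}}(z,z) \le \Pi(z,z)$ for every $z \in \C$. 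The finiteness of the second integral therefore follows from that of the first.

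There is essentially no obstacle here beyond identifying the correct Bergman-type bound to cite; the argument is a one-line computation once the uniform estimate on $\Pi(z,z) e^{-2\phi(z)}$ is in hand, combined with the elementary positivity argument for $\Pi^{\mathfrak{q}}$.
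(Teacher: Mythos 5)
Your proposal is correct and follows essentially the same route as the paper: the uniform Bergman-kernel bound (Christ's estimate) under \eqref{sub-h} plus the pointwise comparison $\Pi^{\mathfrak{q}}(z,z)\le\Pi(z,z)$, followed by the elementary polar integral $\int_{|z|\ge r}|z|^{-3}\,d\lambda(z)<\infty$. The only cosmetic difference is that you derive the diagonal inequality $\Pi^{\mathfrak{q}}(z,z)\le\Pi(z,z)$ explicitly from the iterative rank-one subtraction, whereas the paper reads it off directly from the stated form of the Christ estimate it cites.
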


\begin{proof}
Under the assumption \eqref{sub-h} on $\phi$, we have the following Christ's estimate (see \cite[Theorem 3.1]{QB3}):
\[
\sup_{z\in\C} \Pi^{\mathfrak{q}}(z,z) e^{- 2\phi(z)} \le \sup_{z\in\C} \Pi(z,z) e^{- 2\phi(z)}  < \infty.
\]
It follows that
\begin{align*}
\int\limits_{|z| \ge  r_{\mathfrak{p}, \mathfrak{q}} } \frac{1}{|z|^3} \Pi^{\mathfrak{q}} (z,z)d\lambda_\phi(z) & \le \int\limits_{|z| \ge  r_{\mathfrak{p}, \mathfrak{q}} } \frac{1}{|z|^3} \Pi (z,z)d\lambda_\phi(z) = \int\limits_{|z| \ge  r_{\mathfrak{p}, \mathfrak{q}} } \frac{1}{|z|^3} \Pi (z,z) e^{-2 \phi(z)}d\lambda(z)
\\
&\le  \Big( \sup_{z\in\C} \Pi(z,z) e^{- 2\phi(z)} \Big) \int\limits_{|z| \ge  r_{\mathfrak{p}, \mathfrak{q}} } \frac{1}{|z|^3}d\lambda(z)  <\infty.
\end{align*}
 Lemma \ref{lem-o3}  is proved completely.
\end{proof}

\begin{proof}[Proof of Proposition \ref{prop-im}]
By Lemma \ref{lem-kappa} and Lemma \ref{lem-o3}, we have
\begin{align}
  \sum_{i =1}^\ell \int\limits_{|z|\ge r_{\mathfrak{p}, \mathfrak{q}}}   \Big|  \log\Big |  \frac{z-p_i}{ z- q_i} \Big|^2   + (  \kappa(p_i, z) - \kappa(q_i, z) )  \Big|   \Pi^{\mathfrak{q}}(z,z)    d\lambda_\phi(z) < \infty.
\end{align}
Therefore, the limit
\begin{align*}
&  \lim_{R\to\infty}
\frac{
\displaystyle{
\exp\Big(  -  \sum_{i =1}^\ell \int\limits_{|z|\le R}    \log\Big |  \frac{z-p_i}{ z- q_i} \Big|^2  \Pi^{\mathfrak{q}}(z,z)  d\lambda_\phi(z) \Big)
}
}{
\displaystyle{
\exp\Big(  \int\limits_{r_{\mathfrak{p}, \mathfrak{q}} \le |z|\le R}    (  \kappa(\mathfrak{p}, z) - \kappa(\mathfrak{q}, z) )  \Pi^{\mathfrak{q}}(z,z)  d\lambda_\phi(z) \Big)
}
}
\\
 =&  \exp\Big(   \!\!  -  \sum_{i =1}^\ell \int\limits_{|z|\le  r_{\mathfrak{p}, \mathfrak{q}}  }      \!\!  \!\!    \log\Big |  \frac{z-p_i}{ z- q_i} \Big|^2  \Pi^{\mathfrak{q}}(z,z)  d\lambda_\phi(z) \Big) \cdot  \lim_{R\to\infty} \frac{
 \displaystyle{  \exp\Big(  -  \sum_{i =1}^\ell     \int\limits_{    r_{\mathfrak{p}, \mathfrak{q}}  \le  |z|\le R}    \!\!      \log\Big |  \frac{z-p_i}{ z- q_i} \Big|^2  \Pi^{\mathfrak{q}}(z,z)  d\lambda_\phi(z) \Big)  }
 }{
 \displaystyle{
 \exp\Big(    \int\limits_{r_{\mathfrak{p}, \mathfrak{q}} \le |z|\le R}      \!\!  (  \kappa(\mathfrak{p}, z) - \kappa(\mathfrak{q}, z) )  \Pi^{\mathfrak{q}}(z,z)  d\lambda_\phi(z) \Big)
 }
 }
\end{align*}
exists and is positive.  Since $\Pi^{\mathfrak{q}}$ is a finite rank perturbation of $\Pi$, we have
\[
 \int\limits_{|z|\ge r_{\mathfrak{p}, \mathfrak{q}}}   |   \kappa(\mathfrak{p}, z) - \kappa(\mathfrak{q}, z)   |   \cdot |   \Pi^{\mathfrak{q}}(z,z)    - \Pi(z,z)|  d\lambda_\phi(z)  < \infty
\]
and hence the limit
\[
 \lim_{R\to\infty}
 \frac{
 \displaystyle{
 \exp\Big(    \int\limits_{r_{\mathfrak{p}, \mathfrak{q}} \le |z|\le R}    (  \kappa(\mathfrak{p}, z) - \kappa(\mathfrak{q}, z) )  \Pi^{\mathfrak{q}}(z,z)  d\lambda_\phi(z) \Big)
 }
 }{
 \displaystyle{
 \exp\Big(    \int\limits_{r_{\mathfrak{p}, \mathfrak{q}} \le |z|\le R}    (  \kappa(\mathfrak{p}, z) - \kappa(\mathfrak{q}, z) )  \Pi(z,z)  d\lambda_\phi(z) \Big)
 }
 }
\]
exists  and is positive. Now items (i) and (ii) of Proposition \ref{prop-im} immediately follow  from  Proposition \ref{prop-ref}.

We proceed with the proof of item (iii)  of Proposition \ref{prop-im}. To simplify notation, we let $\mu_\ell$ be  an arbitrary  fixed probability measure  in the measure class determined by $\PP_{\Pi}^{\mathfrak{p}}$ and $\PP_{\Pi}^{\mathfrak{q}}$.
 Let $K\subset \C$ be a fixed bounded subset. Let $r>0$ be chosen large enough in such a way that $r$ is larger than all $r_{\mathfrak{p}, \mathfrak{q}}$ for all $\ell$-tuples $\mathfrak{p}, \mathfrak{q}$ of distinct points in $K$. In particular, $K$ is contained in a disk $\{z\in\C: |z| \le r -\varepsilon\}$.  Denote
\[
H(R, \X; \mathfrak{p}, \mathfrak{q}): = \int\limits_{r\le |z|\le R}    (  \kappa(\mathfrak{p}, z) - \kappa(\mathfrak{q}, z) )  \Pi(z,z)  d\lambda_\phi(z)  +   \sum_{x\in\X: | x| \le R}   \log \prod_{i= 1}^\ell\Big| \frac{x-p_i}{x- q_i }\Big|^2.
\]
Then
\begin{align*}
H(R, \X; \mathfrak{p}, \mathfrak{q})  = & \underbrace{\int\limits_{r\le |z|\le R}    (  \kappa(\mathfrak{p}, z) - \kappa(\mathfrak{q}, z) )  \Pi(z,z)  d\lambda_\phi(z)  +\sum_{x\in\X: r\le | x| \le R}    (\kappa(\mathfrak{q}, z) - \kappa(\mathfrak{p}, z))}_{\text{denoted by $H_1(R, \X; \mathfrak{p}, \mathfrak{q})$}}
\\
&+   \underbrace{\sum_{x\in\X: r\le | x| \le R}   \Big( \log  \prod_{i= 1}^\ell\Big| \frac{z-p_i}{z- q_i }\Big|^2  +  \kappa(\mathfrak{p}, z) - \kappa(\mathfrak{q}, z) \Big)}_{\text{denoted by $H_2(R, \X; \mathfrak{p}, \mathfrak{q}) $}} +  \underbrace{\sum_{x\in\X:  | x| < r}   \log \prod_{i= 1}^\ell\Big| \frac{x-p_i}{x- q_i }\Big|^2}_{\text{denoted by $H_3(\X; \mathfrak{p}, \mathfrak{q}) $}}.
\end{align*}
Note that for any fixed configuration $\X$ and fixed  $R>r$, the functions $(\mathfrak{p}, \mathfrak{q}) \mapsto H_1(R, \X; \mathfrak{p}, \mathfrak{q}), (\mathfrak{p}, \mathfrak{q}) \mapsto H_2(R, \X; \mathfrak{p}, \mathfrak{q})$ and $(\mathfrak{p}, \mathfrak{q}) \mapsto H_3(\X; \mathfrak{p}, \mathfrak{q})$ are continuous on $\C^\ell \times (\C \setminus \X)^\ell$.

Fix any pair $(\mathfrak{p}, \mathfrak{q})$  of $\ell$-tuples of distinct points. Item (i) implies that  there exists a subsequence $R_n\to \infty$  such that the convergence \eqref{general-tuple} takes place for $\PP_\Pi^\mathfrak{q}$-almost every (equivalently for $\mu_\ell$-almost every) configuration $\X$. It follows that the following convergence
\begin{align}\label{contain-p-q}
H( \X; \mathfrak{p}, \mathfrak{q}) : = \lim_{n\to\infty}  H(R_n, \X; \mathfrak{p}, \mathfrak{q}) = \lim_{n\to\infty} \Big(H_1(R_n, \X; \mathfrak{p}, \mathfrak{q})  + H_2(R_n, \X; \mathfrak{p}, \mathfrak{q})\Big) + H_3( \X; \mathfrak{p}, \mathfrak{q})
\end{align}
takes place for  $\mu_\ell$-almost every configuration $\X$.

Lemma \ref{lem-o3} implies that
\[
\E_{\PP_{\Pi}^\mathfrak{q}} \Big[ \sum_{x\in\X} \frac{1}{|x|^3} \mathds{1}(|x| \ge r) \Big]< \infty
\]
and hence by Lemma \ref{lem-kappa},  the limit
\begin{align}\label{cv-r-p-q}
H_2(\X; \mathfrak{p}, \mathfrak{q})) = \lim_{n\to\infty} H_2(R_n, \X; \mathfrak{p}, \mathfrak{q})
\end{align}
exists for $\PP_\Pi^\mathfrak{q}$-almost every (equivalently for $\mu_\ell$-almost every) configuration $\X$ and moreover, for $\mu_\ell$-almost every configuration $\X$,  the limit \eqref{cv-r-p-q} converges uniformly as long as $p_1, \cdots, p_\ell, q_1, \cdots, q_\ell$ range over $K$. It follows that $(\mathfrak{p}, \mathfrak{q}) \mapsto H_3(\X; \mathfrak{p}, \mathfrak{q})$ is continuous on $K^\ell \times K^\ell$.

 The convergences \eqref{contain-p-q} and \eqref{cv-r-p-q} together imply that  the limit
\begin{align}\label{cv-H-1}
H_1(\X; \mathfrak{p}, \mathfrak{q})) = \lim_{n\to\infty} H_1(R_n, \X; \mathfrak{p}, \mathfrak{q})
\end{align}
exists  for $\mu_\ell$-almost every configuration $\X$. Note that
\begin{align}\label{formula-H}
\begin{split}
H_1(R_n, \X; \mathfrak{p}, \mathfrak{q}) =  2 \Re\Big\{ & \sum_{i = 1}^\ell    (p_i-q_i) \cdot \Big[  \int\limits_{r\le |z|\le R_n}     \frac{1}{z}    \cdot \Pi(z,z)  d\lambda_\phi(z)  - \sum_{x\in\X: r\le | x| \le R_n}  \frac{1}{x}\Big]
\\
&+ \sum_{i = 1}^\ell    (p^2_i-q^2_i) \cdot \Big[  \int\limits_{r\le |z|\le R_n}     \frac{1}{2z^2}    \cdot \Pi(z,z)  d\lambda_\phi(z)  - \sum_{x\in\X: r\le | x| \le R_n}  \frac{1}{2x^2}\Big]  \Big\}.
\end{split}
  \end{align}
  By choosing $\mathfrak{p}= (p_1, \cdots, p_\ell), \mathfrak{q} = (q_1, \cdots, q_\ell)$ in such a way that $q_1 = - p_1 = p$ and $q_i = p_i, i = 2, \cdots, \ell$, we get
  \begin{align*}
H_1(R_n, \X; \mathfrak{p}, \mathfrak{q}) =  4 \Re\Big\{  p \cdot \Big[  \int\limits_{r\le |z|\le R_n}     \frac{1}{z}    \cdot \Pi(z,z)  d\lambda_\phi(z)  - \sum_{x\in\X: r\le | x| \le R_n}  \frac{1}{x}\Big] \Big\}.
  \end{align*}
  Choosing $p = 1$ or $p = \sqrt{-1}$,  from \eqref{cv-H-1}  we obtain that the limit
  \begin{align}\label{M-1}
M_1(\X): =   \lim_{n\to\infty}   \Big[  \int\limits_{r\le |z|\le R_n}     \frac{1}{z}    \cdot \Pi(z,z)  d\lambda_\phi(z)  - \sum_{x\in\X: r\le | x| \le R_n}  \frac{1}{x}\Big]
  \end{align}
  exists  for $\mu_\ell$-almost every configuration $\X$. Consequently, using  \eqref{formula-H} and arguing as above, we conclude that  the limit
  \begin{align}\label{M-2}
 M_2(\X): =  \lim_{n\to\infty}  \Big[  \int\limits_{r\le |z|\le R_n}     \frac{1}{2z^2}    \cdot \Pi(z,z)  d\lambda_\phi(z)  - \sum_{x\in\X: r\le | x| \le R_n}  \frac{1}{2x^2}\Big]
  \end{align}
 exists  for $\mu_\ell$-almost every configuration $\X$. Hence the limit \eqref{cv-H-1} converges uniformly as long as $p_1, \cdots, p_\ell$ and $q_1, \cdots, q_\ell$ range over $K$. Moreover, we have
 \begin{align*}
  H_1(\X; \mathfrak{p}, \mathfrak{q})  = 2 \Re\Big\{ \sum_{i = 1}^\ell    (p_i-q_i)   M_1(\X)
+ \sum_{i = 1}^\ell    (p^2_i-q^2_i) M_2(\X) \Big\}
 \end{align*}
 for $\mu_\ell$-almost every configuration $\X$.
 Hence  $(\mathfrak{p}, \mathfrak{q}) \mapsto H_1(\X; \mathfrak{p}, \mathfrak{q})$ is continuous on $\C^\ell \times \C^\ell$.
 By the clear formula
 \[
 H(\X; \mathfrak{p}, \mathfrak{q})  = H_1(\X; \mathfrak{p}, \mathfrak{q})  + H_2(\X; \mathfrak{p}, \mathfrak{q})  + H_3(\X; \mathfrak{p}, \mathfrak{q}),
 \]
 we see that for $\mu_\ell$-almost every configuration $\X$,  the mapping $(\mathfrak{p}, \mathfrak{q}) \mapsto H(\X; \mathfrak{p}, \mathfrak{q})$ and hence the mapping
 \[
(\mathfrak{p}, \mathfrak{q}) \mapsto   \Psi_{\mathfrak{p}, \mathfrak{q}} (\X) = \exp ( H(\X; \mathfrak{p}, \mathfrak{q}) )
 \]
 is continuous on $K^\ell \times (K\setminus \X)^\ell$. Since $K$ is chosen arbitrarily, our functions are continuous  on $\C^\ell\times (\C\setminus \X)^\ell$ for $\mu_\ell$-almost every configuration $\X$.

 We now take $\mathcal{W}$ to be the Borel subset of $\Conf(\C)$ consisting of all configurations $\X$ such that  the limits \eqref{M-1} and \eqref{M-2} converge and
 \[
 \sum_{x\in\X} \frac{1}{|x|^3}  \mathds{1}(|x| \ge 1) < \infty.
 \]
 Obviously, $\mathcal{W}$ belongs to the tail $\sigma$-algebra.  By the argument used in the proof of item (iii), for any fixed configuration $\X\in \mathcal{W}$, the limit \eqref{general-tuple} exists and the function $(\mathfrak{p}, \mathfrak{q})\rightarrow  \Psi_{\mathfrak{p}, \mathfrak{q}}(\X)$ is continuous on $\C^\ell \times (\C\setminus \X)^{\ell}$. Hence it remains to prove that $\PP_{\Pi} (\mathcal{W}) = 1$.  For this purpose, we take any bounded Borel subset $B\subset \C$, and, using the definition of reduced Palm measure (cf. e.g., \cite[Appendix]{QB3}), write
 \begin{align}\label{r-palm}
 \int\limits_{\Conf(\C)} \sum_{x\in\X} \mathds{1}_{\mathcal{W}}(\X) \mathds{1}_B(x) d \PP_{\Pi} (\X)  = \int\limits_\C \Pi(p, p) d \lambda_\phi(p)  \int\limits_{\Conf(\C)}    \mathds{1}_{\mathcal{W}}(\X \cup \{p\}) \mathds{1}_B(p)  d \PP_{\Pi}^{p} (\X).
 \end{align}
 Since $\mathcal{W}$ belongs to the tail $\sigma$-algebra, we have $\mathds{1}_{\mathcal{W}}(\X \cup \{p\})  =  \mathds{1}_{\mathcal{W}}(\X)$. Moreover, by the proof of item (iii) above, we have $\PP^{p}_{\Pi}(\mathcal{W})=1$.  Hence \eqref{r-palm} can be re-written as
 \begin{align*}
  \int\limits_{\mathcal{W}}  \#_B(\X) d \PP_{\Pi} (\X) &  = \int\limits_B \Pi(p, p) d \lambda_\phi(p)  \int\limits_{\Conf(\C)}    \mathds{1}_{\mathcal{W}}(\X)   d \PP_{\Pi}^{p} (\X) 
  \\
  & =  \int\limits_B \Pi(p, p) d \lambda_\phi(p) =   \int\limits_{\Conf(\C)}  \#_B(\X) d \PP_{\Pi} (\X).
 \end{align*}
 Since $B\subset \C$ is arbitrary, the above equality implies that $\PP_{\Pi}(\mathcal{W})=1$.

Item (iii) is proved completely.
 The proof of item (iv) is postponed to  Subsection \ref{subsec-iv}.
  \end{proof}


\section{Computation of normalization constant in the radial case}

This section is devoted to the proof of Theorem \ref{thm-main1}.

\subsection{Finite dimensional approximations}

From now on, we fix two $\ell$-tuples $\mathfrak{p} = (p_1, \dots, p_\ell)$ and $\mathfrak{q} = (q_1, \dots, q_\ell)$ of distinct points in $\C$.   Since $\phi$ is radial,   we have
\begin{align}\label{taylor-kernel}
\Pi(z,w) =  \sum_{k=0}^\infty   a_k^2 (z\bar{w})^k, \text{ where } a_k = \frac{1}{\| z^k\|_{L^2(\C, d\lambda_\phi)}}.
\end{align}
Natural finite-dimensional approximations of $\Pi$ are  given by
\begin{align}\label{finite-app}
\Pi_n(z,w) =  \sum_{k=0}^{n-1} a_k^2 (z\bar{w})^k.
\end{align}
For any $n\ge\ell$ we then set
\[
\Pi_n^{\mathfrak{q}}:  = (\Pi_n)^{\mathfrak{q}}
\]
 and obtain natural finite-dimensional approximations $\Pi_n^{\mathfrak{q}}$ of $\Pi^{\mathfrak{q}}$.
 Our aim now is to show the left-hand side of \eqref{com-exp} can indeed be computed  by approximation.

\subsection{Convergence of finite-dimensional approximations}
In this subsection, Theorem \ref{thm-main1} is reduced to Lemma \ref{lem-trace-class}, Proposition \ref{prop-finite} and Proposition \ref{prop-order}.

\begin{notation}\label{many-notation}
Recall that $r_{\mathfrak{p}, \mathfrak{q}}>0$ is chosen in such a way that \eqref{close-1} holds.  Let $r>0, R>0$ be any two positive numbers such that  $R> r>r_{\mathfrak{p}, \mathfrak{q}}$.

(i) We denote
\begin{align}\label{notation-trunc}
\begin{split}
\chi_{r}^\infty = \ch (| z| \ge r) ; \,   \chi_{0}^r = \ch (0\le | z| \le r); &  \, \chi_{R}^\infty = \ch (| z| \ge R);   \,  \chi_{0}^R = \ch (0\le | z| \le R); \, \chi_{r}^R = \ch (r \le | z| \le R);
\\
g(z) = \Big |  \frac{z-p_i}{ z- q_i} \Big|^2 ; &\quad  h(z)  = \prod_{i=1}^\ell \Big |  \frac{z-p_i}{ z- q_i} \Big|^2  - 1;
\\
h_r^\infty = h \chi_{r}^\infty; \quad h_0^r = h \chi_{0}^r; &\quad h_R^\infty = h \chi_{R}^\infty; \quad h_0^R = h \chi_{0}^R; \quad h_r^R = h\chi_r^R.
\end{split}
\end{align}
(ii) For any $n \ge \ell$, we denote
 \begin{align}\label{notation-T}
 \begin{split}
 T  = \sgn(h)  \sqrt{| h|}    \Pi^{\mathfrak{q}}  \sqrt{ |h|}; &\quad  T_n  = \sgn(h) \sqrt{| h|}  \Pi_n^{\mathfrak{q}}  \sqrt{|h|};
 \\
T_R =\chi_0^R  T\chi_0^R =   \sgn( h_0^R) \sqrt{|  h_0^R|}   \Pi^{\mathfrak{q}}  \sqrt{|  h_0^R|} ; &\quad T_{n, R} = \chi_0^R  T_{n}  \chi_0^R
 = \sgn( h_0^R) \sqrt{|  h_0^R|}   \Pi_n^{\mathfrak{q}}  \sqrt{|  h_0^R|}.
 \end{split}
\end{align}
\end{notation}

Our first lemma, proved in \S \ref{sec-prop-finite} below, shows that our approximating operators belong to the trace class:
\begin{lemma}\label{lem-trace-class}
The operators $T_{n, R}, T_R$ and $T_n$ are all  trace class. Moreover,  $T_{n, R}$ converges to $T_R$   as $n\to\infty$ and $T_{n, R}$ converges to $T_n$ as $R\to\infty$,  both convergences taking place in the space of trace class operators.
\end{lemma}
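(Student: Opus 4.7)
The plan is to treat the trace class property and the two trace norm convergences separately, exploiting that $\Pi_n^{\mathfrak{q}}$ is of finite rank (at most $n-\ell$) while $\Pi^{\mathfrak{q}}$ is an infinite-rank projection. The recurring obstruction is that $M_{\sqrt{|h|}}$ is genuinely unbounded (blowing up like $|z-q_i|^{-1}$ near each $q_i$), so every factorization must be justified at the level of Hilbert-Schmidt integral kernels rather than by composing bounded operators. The compensating mechanism is that any $f\in\Ran(\Pi^{\mathfrak{q}})$ vanishes at each $q_i$; hence $\Pi^{\mathfrak{q}}(z,z)$ has a second-order zero there, exactly matching the $|z-q_i|^{-2}$ pole of $|h|$. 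For the finite-rank operators $T_n$ and $T_{n,R}$, I would fix an orthonormal basis $\{\psi_k\}_{k=1}^{n-\ell}$ of $\Ran(\Pi_n^{\mathfrak{q}})$ (polynomials of degree $<n$ vanishing at $q_1,\dots,q_\ell$), so that
\begin{align*}
T_n f = \sum_k \langle f,\sqrt{|h|}\psi_k\rangle_{L^2(d\lambda_\phi)}\,\sgn(h)\sqrt{|h|}\psi_k.
\end{align*}
The trace class norm is at most $\sum_k\int|h||\psi_k|^2\,d\lambda_\phi$, which is finite because $|h|\le 1/2$ on $\{|z|\ge r_{\mathfrak{p},\mathfrak{q}}\}$ by \eqref{close-1}, the second-order zero of $|\psi_k|^2$ at $q_i$ tames the pole of $|h|$, and the polynomial $|\psi_k|^2$ is absorbed at infinity by $e^{-2\phi}$. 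The same argument (with $h$ replaced by $h_0^R$) handles $T_{n,R}$.

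For $T_R$, I factor $T_R=M_{\sgn(h_0^R)}A_R$ with $A_R=M_{\sqrt{|h_0^R|}}\Pi^{\mathfrak{q}} M_{\sqrt{|h_0^R|}}$ formally positive self-adjoint, and realize $A_R=C_R^*C_R$ via the Hilbert-Schmidt integral operator $C_R$ with kernel $\Pi^{\mathfrak{q}}(z,w)\sqrt{|h_0^R(w)|}$. Indeed
\begin{align*}
\|C_R\|_{HS}^2 = \int_{|w|\le R} |h(w)|\Pi^{\mathfrak{q}}(w,w)\,d\lambda_\phi(w) < \infty,
\end{align*}
because on the bounded region $\{|w|\le R\}$, Christ's estimate (as in Lemma \ref{lem-o3}) bounds $\Pi^{\mathfrak{q}}(w,w)e^{-2\phi(w)}$ uniformly, while the second-order zero of $\Pi^{\mathfrak{q}}(w,w)$ at each $q_i$ absorbs the $|w-q_i|^{-2}$ singularity of $|h|$. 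Since $\|M_{\sgn(h_0^R)}\|\le 1$, it follows that $T_R$ is trace class.

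Convergence $T_{n,R}\to T_R$ as $n\to\infty$ reduces to $A_{n,R}\to A_R$ in trace class, since $T_{n,R}-T_R=M_{\sgn(h_0^R)}(A_{n,R}-A_R)$. Defining $C_{n,R}$ analogously (kernel $\Pi_n^{\mathfrak{q}}(z,w)\sqrt{|h_0^R(w)|}$) and using the projection identity $\int|\Pi_n^{\mathfrak{q}}(z,w)-\Pi^{\mathfrak{q}}(z,w)|^2 d\lambda_\phi(z) = (\Pi^{\mathfrak{q}}-\Pi_n^{\mathfrak{q}})(w,w)$ (valid because $\Ran(\Pi_n^{\mathfrak{q}})\subset\Ran(\Pi^{\mathfrak{q}})$ gives $\Pi_n^{\mathfrak{q}}\le\Pi^{\mathfrak{q}}$),
\begin{align*}
\|C_{n,R}-C_R\|_{HS}^2 = \int_{|w|\le R}|h(w)|(\Pi^{\mathfrak{q}}-\Pi_n^{\mathfrak{q}})(w,w)\,d\lambda_\phi(w).
\end{align*}
The integrand tends pointwise to zero since $\Pi_n^{\mathfrak{q}}(w,w)\nearrow\Pi^{\mathfrak{q}}(w,w)$, and it is dominated by the integrable $|h(w)|\Pi^{\mathfrak{q}}(w,w)\chi_0^R(w)$ from the previous paragraph. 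Dominated convergence yields $C_{n,R}\to C_R$ in Hilbert-Schmidt norm; the identity $A_{n,R}-A_R=(C_{n,R}-C_R)^*C_{n,R}+C_R^*(C_{n,R}-C_R)$, combined with a uniform bound on $\|C_{n,R}\|_{HS}$, then upgrades this to trace class convergence.

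For $T_{n,R}\to T_n$ as $R\to\infty$, set $A_n=M_{\sqrt{|h|}}\Pi_n^{\mathfrak{q}} M_{\sqrt{|h|}}$, so that $A_{n,R}=M_{\chi_0^R}A_n M_{\chi_0^R}$ and $T_{n,R}=M_{\sgn(h)}A_{n,R}$; then
\begin{align*}
T_n-T_{n,R}=M_{\sgn(h)}\bigl(M_{\chi_R^\infty}A_n M_{\chi_0^R}+M_{\chi_0^R}A_n M_{\chi_R^\infty}+M_{\chi_R^\infty}A_n M_{\chi_R^\infty}\bigr).
\end{align*}
Each of the three summands has rank at most $n-\ell$, with trace class norm bounded via the basis $\{\psi_k\}$ by a sum of products of the form $\|\chi_R^\infty\sqrt{|h|}\psi_k\|_{L^2}\cdot\|\sqrt{|h|}\psi_k\|_{L^2}$; since $\int_{|z|\ge R}|h||\psi_k|^2\,d\lambda_\phi\to 0$ by dominated convergence, this finishes the argument. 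The hardest step is the estimate of the second paragraph: controlling $\int|h|\Pi^{\mathfrak{q}}(w,w)\,d\lambda_\phi$ on a bounded domain relies precisely on the cancellation between the pole of $|h|$ at $q_i$ and the second-order zero of $\Pi^{\mathfrak{q}}$ there, and the bookkeeping of this compensation pervades every step of the proof.
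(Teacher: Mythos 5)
Your proof is correct and rests on the same two pillars as the paper's: the trace-class property of $T_R$ comes from the Hilbert--Schmidt estimate $\|\sqrt{|h_0^R|}\,\Pi^{\mathfrak{q}}\|_{HS}^2=\int_{|w|\le R}|h|\,\Pi^{\mathfrak{q}}(w,w)\,d\lambda_\phi<\infty$ (where the second-order zero of $\Pi^{\mathfrak{q}}(\cdot,\cdot)$ at each $q_i$ cancels the pole of $|h|$), and the convergences come from monotonicity of the projections involved. The genuine difference is in how the convergences are established: the paper dispatches both limits with a single appeal to the abstract Proposition~\ref{prop-cp} (monotone strong convergence of projections $P_n\le P$ upgrades to $\mathscr{C}_s$-convergence of $AP_n$), whereas you reprove this fact directly in each case — for $n\to\infty$ via an explicit kernel computation exploiting $\Pi_n^{\mathfrak{q}}\le\Pi^{\mathfrak{q}}$ and dominated convergence, and for $R\to\infty$ via a rank-$(n-\ell)$ basis expansion. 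Your route is more elementary and self-contained; the paper's is shorter but outsources the monotone-projection argument to a separately stated proposition. One minor cosmetic remark: on the bounded disk $\{|w|\le R\}$ the appeal to Christ's estimate is not really needed — continuity of $\Pi^{\mathfrak{q}}(w,w)$ together with its vanishing at the $q_i$ already gives the pointwise bound $|h(w)|\Pi^{\mathfrak{q}}(w,w)\lesssim 1$, which is all that the integral estimate requires; the paper uses exactly this local bound $\Pi^{\mathfrak{q}}(z,z)\le C\prod_i|z-q_i|^2$ rather than the global Christ bound.
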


We next compute the limits of the expectations of our multiplicative functionals. The formulas are related
 to 
(4.3) in Osada-Shirai \cite{Osada-Shirai}. 
For brevity, we write
\[
\Delta(\mathfrak{p}) =  \prod_{1  \leqslant i< j \leqslant \ell} ( p_j- p_i).
\]
\begin{proposition}\label{prop-finite}
For any $n \ge \ell$, we have the following equality:
\begin{align}\label{comp-mul}
\lim_{R \to\infty}  \E_{\Pi_n^{\mathfrak{q}}}  \Big[\prod_{x \in \mathscr{X}: | x| \le R} \prod_{i= 1}^\ell \Big| \frac{x-p_i}{x-q_i}\Big|^2\Big]  =   \frac{ \Det_{i, j =1}^\ell (\Pi_n(p_i, p_j))}{ \Det_{i, j =1}^\ell(\Pi_n(q_i, q_j))}   \frac{| \Delta(\mathfrak{q})|^2}{ | \Delta(\mathfrak{p})|^2}.
\end{align}
Consequently, we have
\begin{align}\label{finite-limit}
\lim_{n\to\infty}\lim_{R \to\infty}  \E_{\Pi_n^{\mathfrak{q}}}  \Big[\prod_{x \in \mathscr{X}: | x| \le R} \prod_{i= 1}^\ell \Big| \frac{x-p_i}{x-q_i}\Big|^2\Big] =\frac{ \Det_{i, j =1}^\ell (\Pi(p_i, p_j))}{ \Det_{i, j =1}^\ell(\Pi(q_i, q_j))}   \frac{| \Delta(\mathfrak{q})|^2}{ | \Delta(\mathfrak{p})|^2}.
\end{align}
\end{proposition}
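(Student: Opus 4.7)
Since $\Pi_n^{\mathfrak{q}}$ is a finite-rank projection of rank $n-\ell$, the measure $\PP_{\Pi_n^{\mathfrak{q}}}$ concentrates on configurations of exactly $n-\ell$ points. The plan is to write this Palm measure explicitly as an orthogonal polynomial ensemble, express the desired expectation as a ratio of partition functions, and identify that ratio via the standard correlation-function identity for $\PP_{\Pi_n}$.

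\emph{Step 1: joint density of the Palm measure.} Because $\phi$ is radial, the monomials $\{a_k z^k\}_{k=0}^{n-1}$ are $L^2(d\lambda_\phi)$-orthonormal in $\Ran(\Pi_n)$, so $\PP_{\Pi_n}$ has symmetric joint density $\frac{1}{Z_n}\prod_{i<j}|y_i-y_j|^2\prod_i e^{-2\phi(y_i)}$ on $\C^n$ with $Z_n = n!\prod_{k=0}^{n-1}a_k^{-2}$. By the Shirai-Takahashi theorem, $\PP_{\Pi_n}^{\mathfrak{q}}=\PP_{\Pi_n^{\mathfrak{q}}}$. Writing $P_{\mathfrak{q}}(z):=\prod_{k=1}^\ell(z-q_k)$ and expanding the Vandermonde when $\ell$ coordinates are set equal to $\mathfrak{q}$ yields the Palm density on $\C^{n-\ell}$,
\[
\frac{1}{Z_n^{(\ell)}(\mathfrak{q})}\prod_{i<j}|x_i-x_j|^2\prod_{i=1}^{n-\ell}|P_{\mathfrak{q}}(x_i)|^2 e^{-2\phi(x_i)},
\]
where $Z_n^{(\ell)}(\mathfrak{q})$ is its normalizing constant.

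\emph{Step 2: partition-function identity and expectation.} Matching $\det(\Pi_n(q_i,q_j))$, viewed as the $\ell$-point correlation of $\PP_{\Pi_n}$ with respect to $d\lambda_\phi$, against its direct computation from the $n$-particle density gives
\[
Z_n^{(\ell)}(\mathfrak{q}) = \frac{(n-\ell)!\,Z_n}{n!\,|\Delta(\mathfrak{q})|^2}\,\Det\nolimits_{i,j=1}^\ell\bigl(\Pi_n(q_i,q_j)\bigr).
\]
Since $\prod_{i=1}^\ell|(x-p_i)/(x-q_i)|^2\cdot|P_{\mathfrak{q}}(x)|^2 = |P_{\mathfrak{p}}(x)|^2$, the untruncated multiplicative-functional expectation equals
\[
\E_{\Pi_n^{\mathfrak{q}}}\Big[\prod_{x\in\X}\prod_{i=1}^\ell\Big|\frac{x-p_i}{x-q_i}\Big|^2\Big] = \frac{Z_n^{(\ell)}(\mathfrak{p})}{Z_n^{(\ell)}(\mathfrak{q})} = \frac{\Det_{i,j=1}^\ell\bigl(\Pi_n(p_i,p_j)\bigr)}{\Det_{i,j=1}^\ell\bigl(\Pi_n(q_i,q_j)\bigr)}\cdot\frac{|\Delta(\mathfrak{q})|^2}{|\Delta(\mathfrak{p})|^2},
\]
which is the right-hand side of \eqref{comp-mul}.

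\emph{Step 3: truncation and $n\to\infty$.} Because $\PP_{\Pi_n^{\mathfrak{q}}}$-almost surely the configuration has exactly $n-\ell$ particles, for each fixed $\X$ the truncated product equals the full product as soon as $R > \max_{x\in\X}|x|$. For $R\ge r_{\mathfrak{p},\mathfrak{q}}$, inequality \eqref{close-1} bounds each factor with $|x|>R$ between $1/2$ and $3/2$, giving the uniform domination
\[
\prod_{x\in\X,\,|x|\le R}\prod_{i=1}^\ell\Big|\frac{x-p_i}{x-q_i}\Big|^2 \le 2^{n-\ell}\prod_{x\in\X}\prod_{i=1}^\ell\Big|\frac{x-p_i}{x-q_i}\Big|^2,
\]
whose right-hand side is $\PP_{\Pi_n^{\mathfrak{q}}}$-integrable by Step 2. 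Dominated convergence yields \eqref{comp-mul}. Finally, the pointwise convergence $\Pi_n(z,w)\to\Pi(z,w)$, together with the positivity of $\Det_{i,j=1}^\ell(\Pi(q_i,q_j))$ (linear independence of the reproducing kernels $\Pi(\cdot,q_i)$), upgrades \eqref{comp-mul} to \eqref{finite-limit} by continuity of finite-dimensional determinants.

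\emph{Main obstacle.} No genuine analytic difficulty arises here: the argument is bookkeeping in the Vandermonde expansion — where care is required with the combinatorial factor $n!/(n-\ell)!$ — combined with a dominated-convergence argument made trivial by the deterministic particle count $n-\ell$. The real work of Theorem \ref{thm-main1}, namely passing from the finite-rank approximations to $\Pi^{\mathfrak{q}}$ and identifying the iterated $R,n\to\infty$ limit with $\E_{\Pi^{\mathfrak{q}}}[\Gamma_{\mathfrak{p},\mathfrak{q}}]$, is deferred to Lemma \ref{lem-trace-class} and Proposition \ref{prop-order}.
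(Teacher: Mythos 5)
Your proposal is correct, and the core computation (Steps~1--2) is exactly the paper's: write $\PP_{\Pi_n}^{\mathfrak q}=\PP_{\Pi_n^{\mathfrak q}}$ as an orthogonal polynomial ensemble on $\C^{n-\ell}$, express $\det_{i,j}(\Pi_n(q_i,q_j))$ via the $\ell$-point correlation function of $\PP_{\Pi_n}$ to obtain the partition-function identity \eqref{exp-norm}, and then identify the untruncated expectation with the ratio $Z_n(\phi,\mathfrak p)/Z_n(\phi,\mathfrak q)$. The one place you diverge is the $R\to\infty$ step. The paper deduces $\lim_{R\to\infty}\det(1+T_{n,R})=\det(1+T_n)$ from Lemma~\ref{lem-trace-class} (convergence $T_{n,R}\to T_n$ in $\mathscr C_1(\mathscr H)$), thereby reusing the trace-class machinery it develops for Theorem~\ref{thm-main1}. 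You instead observe that $\PP_{\Pi_n^{\mathfrak q}}$ is supported on configurations with exactly $n-\ell$ points, that each factor with $|x|>R\ge r_{\mathfrak p,\mathfrak q}$ lies in $[1/2,3/2]$ by \eqref{close-1}, and hence that the truncated product is dominated by $2^{n-\ell}$ times the full product, which is integrable by your Step~2; dominated convergence then gives the limit. Your argument is more elementary — it makes no reference to Fredholm determinants or Schatten-class convergence — and is fully correct, trading the uniform operator estimate for the deterministic particle count. Both routes need Lemma~\ref{lem-trace-class} eventually (the paper uses it here and again later; you defer it entirely to the $n\to\infty$ analysis of $T$), so nothing is saved globally, but your version isolates the finite-rank case cleanly and is arguably the more transparent proof of \eqref{comp-mul} on its own.
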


\begin{proposition}\label{prop-order}
The order of limits in \eqref{finite-limit} is immaterial, that is,
\begin{align*}
 \lim_{R \to\infty}  \lim_{n\to\infty}  \E_{\Pi_n^{\mathfrak{q}}}  \Big[\prod_{x \in \mathscr{X}: | x| \le R} \prod_{i= 1}^\ell \Big| \frac{x-p_i}{x-q_i}\Big|^2\Big]  = \lim_{n\to\infty}\lim_{R \to\infty}  \E_{\Pi_n^{\mathfrak{q}}}  \Big[\prod_{x \in \mathscr{X}: | x| \le R} \prod_{i= 1}^\ell \Big| \frac{x-p_i}{x-q_i}\Big|^2\Big].
\end{align*}
\end{proposition}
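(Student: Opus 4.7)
The plan is to re-express both iterated limits as Fredholm determinants and then match them using the machinery already established. Write $g(z)=\prod_{i=1}^\ell\left|(z-p_i)/(z-q_i)\right|^2$. By the standard multiplicative-functional identity for determinantal processes (applied to the trace-class operator $T_{n,R}$),
\[
\E_{\PP_{\Pi_n^{\mathfrak{q}}}}\!\Big[\prod_{x\in\X:\,|x|\le R} g(x)\Big] = \det(I+T_{n,R}),
\]
together with the analogous identities $\det(I+T_R)=\E_{\PP_{\Pi^{\mathfrak{q}}}}[\prod_{x:|x|\le R}g(x)]$ and $\det(I+T_n)=\E_{\PP_{\Pi_n^{\mathfrak{q}}}}[\prod_{x\in\X}g(x)]$ (the last because the finite-rank $\Pi_n^{\mathfrak{q}}$ only produces finitely many particles a.s.). Combining Lemma~\ref{lem-trace-class} with Lipschitz continuity of $\det(I+\cdot)$ on $\mathscr{C}_1$ gives $\lim_n \det(I+T_{n,R}) = \det(I+T_R)$ for each $R$ and $\lim_R \det(I+T_{n,R}) = \det(I+T_n)$ for each $n$. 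Proposition~\ref{prop-finite} then yields $\lim_n\lim_R\det(I+T_{n,R}) = \lim_n\det(I+T_n) = L_\infty$, where $L_\infty$ denotes the right-hand side of \eqref{com-exp}. For the opposite order, Corollary~\ref{cor-radial} provides $L^1(\PP_{\Pi^{\mathfrak{q}}})$-convergence of $\prod_{x:|x|\le R}g(x)$ to $\Gamma_{\mathfrak{p},\mathfrak{q}}$, so $\lim_R\lim_n\det(I+T_{n,R}) = \lim_R\det(I+T_R) = \E_{\PP_{\Pi^{\mathfrak{q}}}}[\Gamma_{\mathfrak{p},\mathfrak{q}}]$ also exists.

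With both iterated limits shown to exist, equating them is the real task. My approach would be a diagonal-subsequence argument: select $R_n\to\infty$ along which simultaneously $\|T_{n,R_n}-T_n\|_1\to 0$ and $\|T_{n,R_n}-T_{R_n}\|_1\to 0$ \emph{fast enough} to dominate the Lipschitz factor $\exp(\|T_n\|_1+\|T_{R_n}\|_1)$ in the $\mathscr{C}_1$-continuity bound for $\det(I+\cdot)$. Once such $R_n$ is produced, Lipschitz continuity gives $\det(I+T_{R_n})-\det(I+T_n)\to 0$; combined with Proposition~\ref{prop-finite} this yields $\det(I+T_{R_n})\to L_\infty$, and since the full limit $\lim_R\det(I+T_R)$ exists, any subsequential limit equals it. We conclude $\E_{\PP_{\Pi^{\mathfrak{q}}}}[\Gamma_{\mathfrak{p},\mathfrak{q}}]=L_\infty$, which is precisely the desired equality of iterated limits.

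The main obstacle lies in constructing the diagonal sequence. The two trace-norm requirements pull in opposite directions with respect to $R_n$: the quantity $\|T_{n,R_n}-T_n\|_1$ is controlled by $\int_{|z|>R_n}|h(z)|\Pi_n^{\mathfrak{q}}(z,z)\,d\lambda_\phi$ and decays only when $R_n$ exceeds the effective support radius of $\Pi_n^{\mathfrak{q}}(z,z)$ (scale $\sqrt n$ for Ginibre-type weights), whereas $\|T_{n,R_n}-T_{R_n}\|_1$ is controlled by $\int_{|z|\le R_n}|h(z)|\,|\Pi_n^{\mathfrak{q}}-\Pi^{\mathfrak{q}}|(z,z)\,d\lambda_\phi$ and decays only when $R_n$ remains inside that same scale. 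Aggravating the situation, the Lipschitz constant $\exp(\|T_n\|_1)$ may itself blow up as $n\to\infty$ because $|h(z)|\sim 1/|z|$ is not absolutely integrable against $\Pi^{\mathfrak{q}}(z,z)\,d\lambda_\phi$. Reconciling these competing growth rates calls for sharp tail asymptotics of $\Pi_n(z,z)$ in the edge regime $|z|\sim\sqrt n$ and delicate bookkeeping for the conditionally convergent integrals $\int|h|\,\Pi_n^{\mathfrak{q}}\,d\lambda_\phi$ — exactly the circle of estimates flagged in the introduction as culminating in Lemma~\ref{lem-E4}.
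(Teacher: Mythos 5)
Your opening steps are fine and match the paper: expressing the finite-$n$, finite-$R$ expectations as Fredholm determinants $\det(I+T_{n,R})$, using Lemma~\ref{lem-trace-class} to get $\det(I+T_{n,R})\to\det(I+T_R)$ as $n\to\infty$ and $\det(I+T_{n,R})\to\det(I+T_n)$ as $R\to\infty$, and invoking Proposition~\ref{prop-finite} and Corollary~\ref{cor-radial} to identify the two iterated limits as $L_\infty$ and $\E_{\PP_{\Pi^{\mathfrak{q}}}}[\Gamma_{\mathfrak{p},\mathfrak{q}}]$ respectively. But your proposed mechanism for equating them --- a diagonal subsequence $R_n$ making $\|T_{n,R_n}-T_n\|_1$ and $\|T_{n,R_n}-T_{R_n}\|_1$ small \emph{relative to} $\exp(\|T_n\|_1+\|T_{R_n}\|_1)$ --- has a structural obstruction that you notice but do not overcome: $T\notin\mathscr{C}_1(\mathscr{H})$, because $h(z)\sim 1/|z|$ and $\int|h(z)|\Pi^{\mathfrak{q}}(z,z)\,d\lambda_\phi(z)=\infty$; hence both $\|T_n\|_1$ and $\|T_R\|_1$ diverge, the Lipschitz constant in Simon's bound \eqref{cont-det} (with $s=1$) blows up, and no choice of $R_n$ can make the product go to zero by this route. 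This is not a bookkeeping issue that sharper edge asymptotics will fix; it is a mismatch between the $\mathscr{C}_1$-topology you are using and the operator $T$ you need to reach.

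The idea you are missing is precisely what the paper introduces to get around this: factor the Fredholm determinant as
\[
\det(I+T_{n,R})=\Det_3(I+T_{n,R})\cdot\exp\Bigl(\tr T_{n,R}-\tfrac12\tr T_{n,R}^2\Bigr),
\]
which trades $\mathscr{C}_1$-control for $\mathscr{C}_3$-control in the regularized factor $\Det_3$. Since $T\in\mathscr{C}_3(\mathscr{H})$ (Lemma~\ref{lem-S3}) and $T_{n,R}\to T$ in $\mathscr{C}_3$ in either order of limits (Proposition~\ref{prop-order-det}), the $\Det_3$ part converges without incident. All the difficulty is pushed into the scalar regularization exponent $I(n,R)=\tr T_{n,R}-\tfrac12\tr T_{n,R}^2$, and the paper then decomposes $I(n,R)=E_1+E_2+E_3+E_4$ (Proposition~\ref{prop-factor}): $E_1$ isolates the contribution near the poles $q_i$ (which \emph{is} trace class and converges in $\mathscr{C}_1$), $E_2$ and $E_3$ are scalar integrals whose iterated limits agree by dominated convergence and Lemma~\ref{lem-h-k}, and $E_4=\tfrac14\|[h_r^R,\Pi_n^{\mathfrak{q}}]\|_{HS}^2$ is made uniformly small in $(n,R)$ by choosing $r$ large, using the radial symmetry of $\Pi$ in an essential way (Lemma~\ref{lem-E4}). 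Without the $\Det_3$ factorization, your diagonal argument has no mechanism to absorb the divergence of $\|T_n\|_1$, so the proof does not close.
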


Proposition \ref{prop-finite} and Proposition \ref{prop-order} will be proved in \S \ref{sec-prop-finite} and in \S \ref{sec-prop-order}.

\begin{proof}[Proof of Theorem \ref{thm-main1}]
We first claim that for any fixed $R>0$,
\begin{align}\label{exp-conv}
 \lim_{n\to\infty}  \E_{\Pi_n^{\mathfrak{q}}}  \Big[\prod_{x \in \mathscr{X}: | x| \le R} \prod_{i= 1}^\ell \Big| \frac{x-p_i}{x-q_i}\Big|^2\Big] =   \E_{\Pi^{\mathfrak{q}}}  \Big[\prod_{x \in \mathscr{X}: | x| \le R} \prod_{i= 1}^\ell \Big| \frac{x-p_i}{x-q_i}\Big|^2\Big].
\end{align}
Since $T_{n, R}$ and $T_R$ are both in trace class, the expectations of multiplicative functionals are given by corresponding Fredholm determinants:
\begin{align}\label{}
 \E_{\Pi_n^{\mathfrak{q}}}  \Big[\prod_{x \in \mathscr{X}: | x| \le R} \prod_{i= 1}^\ell \Big| \frac{x-p_i}{x-q_i}\Big|^2\Big]  = \det(1 +T_{n, R}); \quad  \E_{\Pi^{\mathfrak{q}}}  \Big[\prod_{x \in \mathscr{X}: | x| \le R} \prod_{i= 1}^\ell \Big| \frac{x-p_i}{x-q_i}\Big|^2\Big]  = \det(1 +T_{ R}).
\end{align}
Now the convergence \eqref{exp-conv} follows immediately from Lemma \ref{lem-trace-class}.  Applying Corollary \ref{cor-radial}, we obtain
\begin{align}\label{R-n}
\E_{\PP_{\Pi}^{\mathfrak{q}}} [\Gamma_{\mathfrak{p}, \mathfrak{q}}]  = \lim_{R\to\infty} \E_{\Pi^{\mathfrak{q}}}  \Big[\prod_{x \in \mathscr{X}: | x| \le R} \prod_{i= 1}^\ell \Big| \frac{x-p_i}{x-q_i}\Big|^2\Big] =  \lim_{R \to\infty}  \lim_{n\to\infty}  \E_{\Pi_n^{\mathfrak{q}}}  \Big[\prod_{x \in \mathscr{X}: | x| \le R} \prod_{i= 1}^\ell \Big| \frac{x-p_i}{x-q_i}\Big|^2\Big].
\end{align}
An application of Proposition \ref{prop-finite} and Proposition \ref{prop-order} yields the desired result \eqref{com-exp}.
 \end{proof}

\subsection{Proof of Lemma \ref{lem-trace-class} and Proposition \ref{prop-finite}}\label{sec-prop-finite}

Let  $\mathscr{H}$ denote the Hilbert space $L^2(\C,  d \lambda_\phi)$. Write $\|\cdot\|_{\mathscr{H}}$ for the norm in $L^2(\C, d \lambda_\phi)$.   For any real number $s \in [1, \infty)$, let  $\mathscr{C}_s(\mathscr{H})$ denote the von~Neumann-Schatten $s$-class on $\mathscr{H}$, that is,  the class of bounded linear operators $A$ on $\mathscr{H}$, such that $\tr(| A|^s) <\infty$, where $|A|^s: = (A^* A)^{\frac{s}{2}}$ is the continuous functional calculus of the self-adjoint positive operator $A^*A$ under the function $t \mapsto t^{\frac{s}{2}}$ defined on $[0, \infty)$.  In particular, the von~Neumann-Schatten $1$-class coincides with the trace class while the von~Neumann-Schatten $2$-class coincides with the Hilbert-Schmidt class.  The space $\mathscr{C}_s(\mathscr{H})$ can be equipped  with the norm $\| \cdot \|_s$ defined by $\| A \|_s = \tr(|A|^s)^{1/s}$. In particular, the norm $\|\cdot\|_2$ coincides with the Hilbert-Schmidt norm, which we denote also by $\| \cdot \|_{HS}$. The von~Neumann-Schatten $s$-class norm $\|\cdot\|_s$ has the following  properties:
\begin{align}\label{op-ideal}
\|A\|_s = \| A^*\|_s \an \| BAC\|_{s} \le \|B\| \cdot \|A\|_s\cdot \|C\|,
\end{align}
where $\|\cdot\|$ stands for the usual operator norm.
The following H\"older inequalities for operators in von~Neumann-Schatten $s$-classes will be frequently used:
\begin{align}\label{S-holder}
\|AB\|_{s} \le \|A\|_{s_0}^{\theta} \cdot \|B\|_{s_1}^{1-\theta}, \text{ if $\frac{1}{s} = \frac{\theta}{s_0} + \frac{1-\theta}{s_1}, \theta\in(0, 1)$.}
\end{align}

We will need the following standard proposition. Recall that for two positive operators $A, B$ on $\mathscr{H}$, we write $A\le B$ if $A-B$ is a positive operator. In particular, if $A, B$ are both orthogonal projections on $\mathscr{H}$, then $A\le B$ means that the range of $A$ is contained in the range of $B$.

\begin{proposition}\label{prop-cp}
Let $s\in [1, \infty)$ and let $A\in \mathscr{C}_s(\mathscr{H})$. Suppose that $P$ is an orthogonal projection  on $\mathscr{H}$ and $P_n$'s orthogonal projection on $\mathscr{H}$ such that $P_n\le P$ and $P_n \le P_{n+1}$ for any $n\in\N$. If the sequence $(P_n)_{n\in\N}$ converges  to $P$ in the strong operator topology,  then
\begin{align}\label{cv-p}
\lim_{n\to\infty} \| AP - AP_n\|_s=0.
\end{align}
\end{proposition}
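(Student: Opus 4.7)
The plan is to exploit the density of finite-rank operators in $\mathscr{C}_s(\mathscr{H})$ and the fact that for operators of fixed rank $N$ the Schatten norm is controlled by the operator norm. Set $Q_n := P - P_n$. Since $(P_n)$ is an increasing sequence of orthogonal projections with $P_n \le P$ and $P_n \to P$ in the strong operator topology, the operators $Q_n$ are themselves orthogonal projections (onto the orthogonal complement of $\mathrm{Ran}(P_n)$ inside $\mathrm{Ran}(P)$), they decrease, and $Q_n \to 0$ strongly. The conclusion \eqref{cv-p} is equivalent to $\|A Q_n\|_s \to 0$.

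Fix $\varepsilon>0$. Since $s<\infty$, finite-rank operators are dense in $\mathscr{C}_s(\mathscr{H})$ (this follows from the singular value decomposition: truncating the singular values of $A$ after finitely many terms approximates $A$ in $\|\cdot\|_s$ because the singular values lie in $\ell^s$). Choose a finite-rank operator $F = \sum_{k=1}^{N} \langle \cdot, e_k\rangle_{\mathscr{H}} f_k$ with $\|A-F\|_s < \varepsilon$. By the ideal property \eqref{op-ideal},
\begin{equation*}
\|(A-F)Q_n\|_s \;\le\; \|A-F\|_s \cdot \|Q_n\| \;\le\; \varepsilon,
\end{equation*}
uniformly in $n$, since $\|Q_n\|\le 1$.

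It remains to control $\|FQ_n\|_s$. For any vector $u \in \mathscr{H}$,
\begin{equation*}
FQ_n u \;=\; \sum_{k=1}^N \langle Q_n u, e_k\rangle f_k \;=\; \sum_{k=1}^N \langle u, Q_n e_k\rangle f_k,
\end{equation*}
so $\|FQ_n\| \le \sum_{k=1}^N \|f_k\| \cdot \|Q_n e_k\|$, and since $Q_n \to 0$ strongly, $\|Q_n e_k\| \to 0$ for each $k$, so $\|FQ_n\| \to 0$. Because $\mathrm{rank}(FQ_n) \le N$, the elementary bound $\|T\|_s \le (\mathrm{rank}\,T)^{1/s}\|T\|$ gives $\|FQ_n\|_s \le N^{1/s}\|FQ_n\| \to 0$. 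Combining the two estimates yields $\limsup_n \|AQ_n\|_s \le \varepsilon$, and letting $\varepsilon \downarrow 0$ completes the proof.

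There is no real obstacle: the argument is a standard $\varepsilon/2$ truncation in the Schatten ideal. The only ingredients invoked beyond \eqref{op-ideal} are density of finite-rank operators in $\mathscr{C}_s$ for $s<\infty$ and the comparison $\|T\|_s \le (\mathrm{rank}\,T)^{1/s}\|T\|$ for finite-rank $T$; the monotonicity hypothesis $P_n \le P_{n+1}$ is in fact not used beyond guaranteeing that the $Q_n$ are projections of uniformly bounded norm (which also follows from $P_n \le P$ alone).
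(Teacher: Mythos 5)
Your proof is correct, and it takes a genuinely different (and more self-contained) route than the paper. The paper first establishes operator-norm convergence $\|AP - AP_n\|\to 0$ by a compactness argument run by contradiction: if it failed along a subsequence, one could extract, using the monotonicity $P_n\le P_{n+1}$, unit vectors $\xi_i\in \Ran(P_{n_{i+1}}-P_{n_i})$ lying in mutually orthogonal subspaces (hence $\xi_i\rightharpoonup 0$ weakly) with $\|A\xi_i\|_{\mathscr{H}}>\varepsilon$, contradicting compactness of $A$. Having the operator-norm convergence, the paper then invokes Theorem 2.17 of Simon's \emph{Trace Ideals} to upgrade it to $\mathscr{C}_s$-convergence. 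Your argument instead splits $A=F+(A-F)$ with $F$ finite rank and $\|A-F\|_s<\varepsilon$: the tail term is handled uniformly in $n$ by the ideal inequality \eqref{op-ideal} together with $\|Q_n\|\le 1$, while the finite-rank part is handled by the pointwise estimate $\|FQ_n\|_s\le N^{1/s}\|FQ_n\|\to 0$. This buys you two things: the proof is self-contained (no appeal to Grümm's theorem), and, as you correctly observe, it does not use the monotonicity $P_n\le P_{n+1}$ at all — only $P_n\le P$ (to keep $\|Q_n\|\le 1$) and strong convergence are needed, so your version actually proves a slightly more general statement than the one formulated. The paper's route, by contrast, genuinely exploits the monotonicity to build the weakly-null sequence $(\xi_i)$. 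Both arguments are fine; yours is arguably the cleaner one for this particular statement.
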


\begin{proof}
 Let us first show that
\begin{align}\label{norm-cv}
\lim_{n\to\infty} \| AP - AP_n\|=0,
\end{align}
where $\| \cdot \|$ is the operator norm. Indeed, if \eqref{norm-cv} does not hold, then by passing to a subsequence if necessary, we may assume that there exists $\varepsilon>0$, such that $\| AP - AP_n\| > \varepsilon$, for any $n\in\N$.  Obviously,  $AP_k$ converges in strong operator topology to $AP$ as $k\to\infty$,  hence  $AP_k- AP_n$ converges  to in strong operator topology to $AP - AP_n$ as $k\to\infty$ for any $n\in\N$. It follows that for any $n\in\N$,
\[
\varepsilon < \|AP-AP_n\| \le \liminf_{k \to\infty}\|AP_k-AP_n\|.
\]
Consequently, we can find a subsequence $n_1 < n_2 < \cdots$ of positive integers, such that
\[
\|AP_{n_{i+1}} - AP_{n_i}\| > \varepsilon, \text{\, for any $ i = 1, 2, \cdots.$}
\]
Then  for any $i\in\N$ we can find a unit vector $\xi_i$ in the range $\Ran(P_{n_{i+1}} - P_{n_i})$ of the projection $P_{n_{i+1}} - P_{n_i}$ such that $\|A \xi_i\|_{\mathscr{H}} > \varepsilon$. Note that by construction,   $\xi_i$ converges weakly in $\mathscr{H}$ to $0$ as $i$ goes to infinity. Using the compactness of the operator $A$, we get $\lim_{i\to\infty}\| A\xi_i\|_{\mathscr{H}}=0$. This contradiction implies that we must have \eqref{norm-cv}.

Now by applying \cite[Theorem 2.17]{Simon-trace}, we get the desired convergence \eqref{cv-p}.
\end{proof}

\begin{proof}[Proof of Lemma \ref{lem-trace-class}]
Note that $T_n$ and $T_{n, R}$ are both finite rank bounded linear operators, so they are in trace class.  Now we prove that $T_R$ is in trace class.
Since there exists $C>0$ such that
\[
\Pi^{\mathfrak{q}}(z,z) \le C \prod_{i=1}^\ell | z-q_i|^2 \quad \text{ if $|z|\le R$},
\]
the function $|  h(z)| \Pi^{\mathfrak{q}}(z, z) $ is bounded on the disk $\{z \in \C: | z| \le R\}$. Therefore, 
\[
 \int_{|z|\le R}  |  h(z)| \Pi^{\mathfrak{q}}(z, z) d\lambda_\phi(z)< \infty.
\]
By using the inequality 
\[
\int_{|w|\le R}  | \Pi^{\mathfrak{q}}(z, w) |^2 d\lambda_\phi(w) \le  \int_{\C}  | \Pi^{\mathfrak{q}}(z, w) |^2 d\lambda_\phi(w)  =  \Pi^{\mathfrak{q}}(z, z), 
\]
where the last equality is a consequence of the fact that $ \Pi^{\mathfrak{q}}$ is an orthogonal projection, we immediately obtain 
\[
 \|\sqrt{|h_0^R|} \Pi^{\mathfrak{q}}  \|_{HS}^2 = \int_{|z|\le R} \int_{|w|\le R}  |  h(z)| | \Pi^{\mathfrak{q}}(z, w) |^2 d\lambda_\phi(z) d\lambda_\phi(w)\le  \int_{|z|\le R}  |  h(z)| \Pi^{\mathfrak{q}}(z, z) d\lambda_\phi(z)< \infty.
\]
It follows that $\sqrt{|h_0^R|} \Pi^{\mathfrak{q}}$ is Hilbert-Schmidt, hence  $\sqrt{|h_0^R|} \Pi^{\mathfrak{q}} \sqrt{|h_0^R|}$ and  $T_R$ are both in trace class. The assertions concerning convergences in $\mathscr{C}_1(\mathscr{H})$ are immediate from Proposition \ref{prop-cp}.
\end{proof}

\begin{proof}[Proof of Proposition \ref{prop-finite}]
By Lemma \ref{lem-trace-class}, $T_{n, R}$ converges to $T_n$ in $\mathscr{C}_1(\mathscr{H})$ as $R\to\infty$, whence
\[
\lim_{R\to\infty} \Det(1 +   T_{n, R} ) = \Det(1 + T_n),
\]
or, in other words,
\begin{align*}
\lim_{R\to\infty} \E_{\Pi_n^{\mathfrak{q}}}  \Big[\prod_{x \in \mathscr{X}: | x| \le R} \prod_{i= 1}^\ell \Big| \frac{x-p_i}{x-q_i}\Big|^2\Big]   = \E_{\Pi_n^{\mathfrak{q}}}  \Big[\prod_{x \in \mathscr{X}} \prod_{i= 1}^\ell \Big| \frac{x-p_i}{x-q_i}\Big|^2\Big] .
\end{align*}
Recall that $\PP_{\Pi_n}$ is an orthogonal polynomial ensemble given by following probability measure on $\C^n$:
\[
\frac{1}{Z_n(\phi)} \cdot \prod_{1\leqslant i < j \leqslant n } | z_i - z_j|^2 \cdot  \prod_{j=1}^n  d\lambda_\phi(z_i).
\]
 The reduced Palm measure $\PP_{\Pi_n}^{\mathfrak{q}} = \PP_{\Pi_n^{\mathfrak{q}}}$ is also an orthogonal polynomial ensemble, given by the following probability measure  on $\C^{n-\ell}$:
 \[
 \frac{1}{Z_n(\phi, \mathfrak{q})}   \cdot \prod_{1\leqslant i < j \leqslant n-\ell} | z_i - z_j|^2 \cdot  \prod_{j=1}^{n-\ell}\Big( \prod_{k=1}^\ell | z_j - q_k |^2   d\lambda_\phi(z_j) \Big).
\]
By definition, the normalization constant $Z_n(\phi, \mathfrak{q})$ is given by the formula
\begin{align}\label{norm-const}
Z_n(\phi, \mathfrak{q}) = \int_{\C^{n-\ell}}\prod_{1\leqslant i < j \leqslant n-\ell} | z_i - z_j|^2 \cdot  \prod_{j=1}^{n-\ell}\Big( \prod_{k=1}^\ell | z_j - q_k |^2   d\lambda_\phi(z_j) \Big).
\end{align}
By the definition of $\ell$-th order correlation function of $\PP_{\Pi_n}$ (see, e.g.,  \cite[formula (2.3)]{Soshnikov}), we have
\begin{align}\label{corr-help}
\Det_{i, j =1}^\ell (\Pi_n(q_i, q_j)) =   \frac{n!}{ (n-\ell)!} \int_{\C^{n-\ell}} \frac{1}{Z_n(\phi)}\prod_{1\leqslant i < j \leqslant n- \ell } | z_i - z_j|^2     | \Delta(\mathfrak{q})|^2    \prod_{j=1}^{n-\ell}   \prod_{k=1}^\ell | z_j - q_k |^2       \prod_{j=1}^{n-\ell} d\lambda_\phi(z_i).
\end{align}
It follows that
\begin{align}\label{exp-norm}
Z_n(\phi, \mathfrak{q})  = Z_n(\phi)   \frac{ (n-\ell)!}{n!}   \frac{ \Det_{i, j =1}^\ell (\Pi_n(q_i, q_j))}{|  \Delta(\mathfrak{q})|^2}.
\end{align}
On the other hand, we also have
\begin{align*}
 \E_{\Pi_n^{\mathfrak{q}}}  \Big[\prod_{x \in \mathscr{X}} \prod_{i= 1}^\ell \Big| \frac{x-p_i}{x-q_i}\Big|^2\Big] &=    \int_{\C^n}    \Big[\prod_{ j=1}^{n-\ell} \prod_{k= 1}^\ell \Big| \frac{z_j-p_k}{z_j-q_k}\Big|^2\Big]   \frac{1}{Z_n(\phi, \mathfrak{q})}    \prod_{1 \leqslant i < j \leqslant n-\ell} | z_i - z_j|^2 \prod_{j=1}^{n-\ell}\Big( \prod_{k=1}^\ell | z_j - q_k |^2   d\lambda_\phi(z_j) \Big)
 \\
 & = \int_{\C^n}   \frac{1}{Z_n(\phi, \mathfrak{q})}   \prod_{1\leqslant i < j \leqslant n-\ell} | z_i - z_j|^2  \prod_{j=1}^{n-\ell}\Big( \prod_{k=1}^\ell | z_j - p_k |^2   d\lambda_\phi(z_j) \Big).
\end{align*}
Consequently, by using defining normalization constant $Z_n(\phi, \mathfrak{p})$ as in \eqref{norm-const}, we get
\[
\E_{\Pi_n^{\mathfrak{q}}}  \Big[\prod_{x \in \mathscr{X}} \prod_{i= 1}^\ell \Big| \frac{x-p_i}{x-q_i}\Big|^2\Big] =  \frac{ Z_n(\phi, \mathfrak{p})}{Z_n(\phi, \mathfrak{q})}.
\]
Now by applying the formula \eqref{exp-norm} for  $Z_n(\phi, \mathfrak{p})$ and  $Z_n(\phi, \mathfrak{q})$, we arrive at the desired equality \eqref{comp-mul}.
\end{proof}

\subsection{Proof of Proposition \ref{prop-order}}\label{sec-prop-order}

The proof of Proposition \ref{prop-order} is quite involved.   Technical difficulties arise since the function
\begin{align}\label{scheme-h}
h(z) = \prod_{i= 1}^\ell\Big| \frac{z-p_i}{z- q_i }\Big|^2 - 1,
\end{align}
that we used for computing  $\Gamma_{\mathfrak{p}, \mathfrak{q}}$  and hence the Radon-Nikodym derivative $\frac{d\PP_\Pi^{\mathfrak{p}}}{d\PP_{\Pi}^{\mathfrak{q}}}$, has poles and decays at infinity quite slowly. The  key point  is the  factorization \eqref{exp-decom}.
Our argument can be summarized as follows.

\noindent \textbf{Step 1.} The expectations, with respect to the determinantal point processes $\PP_{\Pi_n}^{\mathfrak{q}}$,  of the  multiplicative functionals
\[
\prod_{x \in \mathscr{X}: | x| \le R} \prod_{i= 1}^\ell \Big| \frac{x-p_i}{x-q_i}\Big|^2
\]
are given by Fredholm determinants $\det(1+T_{n, R})$, where $T_{n, R}$ is defined in \eqref{notation-T}. Although the operators $T_{n, R}$ are in trace class  for any $n\in\N$ and $R>0$,  the limits
\begin{align*}
\lim_{n\to\infty} \lim_{R\to\infty} T_{n, R} \an \lim_{R\to\infty} \lim_{n\to\infty}  T_{n, R}
\end{align*}
do not exist in the space $\mathscr{C}_1(\mathscr{H})$ of trace class operators. These limits do however exist in the space $\mathscr{C}_3(\mathscr{H})$, the von~Neumann-Schatten $3$-class,  and are both equal to $T$ (defined in \eqref{notation-T}), see Lemma \ref{lem-S3} and Lemma \ref{prop-order-det}.

\bigskip

\noindent \textbf{Step 2.} We represent the Fredholm determinant $\det(1+T_{n, R})$ as a product of  the regularized Fredholm determinant and  the regularization factor
\[
\det(1+T_{n, R}) = \underbrace{\Det_3(1 + T_{n, R})}_{\text{regular part}} \cdot  \underbrace{\exp\Big(\tr(T_{n,R}) - \frac{1}{2} \tr(T_{n, R}^2)\Big)}_{\text{regularization part}}.
\]
see Proposition \ref{prop-factor}.
The definition of the regularized Fredholm determinant $\Det_3$ is recalled in \S \ref{subsec-factor}.

\bigskip

\noindent \textbf{Step 3.} We further  factorize the regularization part $\exp(\tr(T_{n,R}) - \frac{1}{2} \tr(T_{n, R}^2))$  or, equivalently, we decompose the  integral
\begin{align}\label{int-sum-dec}
I(n, R): = \tr(T_{n,R}) - \frac{1}{2} \tr(T_{n, R}^2)  = \int_{\C} T_{n, R}(z,z) d\lambda_\phi(z) - \frac{1}{2} \int_{\C} [T_{n, R}^2](z,z)  \lambda_\phi(z)
\end{align}
 into summands controlling, respectively, the contribution of the neighbourhood of the poles of the function $h(z)$ (defined in \eqref{scheme-h}), the main contribution   and the contribution at infinity. It is then much easier to control these summands separately. The contribution of the neighbourhoods of poles is controlled in Lemma \ref{lem-E1}, the main part is controlled in Lemmata \ref{lem-E2}, \ref{lem-E3} and the contribution at infinity  is controlled in Lemma \ref{lem-E4}.

 \subsubsection{The factorization formula}\label{subsec-factor}
 For stating the factorization formula \eqref{exp-decom},  let  us  first briefly  recall necessary material from the theory of regularized Fredholm determinants (see, e.g. Helemskii \cite{helem}, Simon \cite{Simon-det}), which will be a crucial ingredient in this section.

For any $n\in\N$, the regularized Fredholm determinant $\Det_n$ is defined as follows. If $A \in \mathscr{C}_1(\mathscr{H})$, then we define
 \begin{align}\label{f-reg-fred}
 \Det_n(1 + A)  = \det(1+ A) \cdot \exp\Big( \sum_{k=1}^{n-1} \frac{(-1)^k }{k} \tr(A^k)\Big),
 \end{align}
 where $\det(1 + A)$ is classical Fredholm determinant. The map $A \mapsto \Det_n(1+A)$ is continuous in the $\|\cdot\|_n$-norm. Consequently, since $\mathscr{C}_1(\mathscr{H})$ is a dense subspace in $\mathscr{C}_n(\mathscr{H})$, the map  $A\mapsto \Det_n(1+A)$ defined by the formula \eqref{f-reg-fred}  is uniquely continuously extended onto $\mathscr{C}_n(\mathscr{H})$.

Theorem 6.5 in Simon \cite{Simon-det} states that for any $n\in\N$, there exists  $\gamma_n> 0$ such that for any $A, B \in \mathscr{C}_n(\mathscr{H})$, we have
\begin{align}\label{cont-det}
| \Det_n (1 + A)  - \Det_n(1 + B) | \le \| A-B\|_n \exp[\gamma_n (\| A\|_n + \| B\|_n + 1)^n].
\end{align}

\begin{proposition}[Factorization]\label{prop-factor}
For any $n \ge \ell$, we have
\begin{align}\label{exp-decom}
 & \E_{\PP_{\Pi_n}^{\mathfrak{q}}}  \Big[\prod_{x \in \mathscr{X}: | x| \le R} \prod_{i= 1}^\ell \Big| \frac{x-p_i}{x-q_i}\Big|^2\Big]  =  \Det(1 + T_{n, R})= \Det_3(1 + T_{n, R}) \cdot \exp\Big( \sum_{i=1}^4 E_i(n, R; r) \Big),
\end{align}
with $E_1(n, R; r), E_2(n, R; r),  E_3(n, R; r), E_4(n, R; r)$  given by
\begin{align}\label{E-4}
\begin{split}
E_1(n, R; r) &=    \tr(\chi_0^r T_{n, R} ) - \frac{1}{2} \tr(\chi_0^r T_{n, R}^2)     - \frac{1}{2}  \tr(h_r^R   \Pi_n^{\mathfrak{q}}    h_0^r \Pi_n^{\mathfrak{q}}  ) ;
\\
E_2(n, R; r) &= \int_{ r \le | z | \le R}    \Big( h(z) - \frac{h(z)^2}{2}   + (\kappa(\mathfrak{p}, z) - \kappa(\mathfrak{q}, z)) \Big) \Pi_n(z,z)   d\lambda_\phi(z) ;
\\
E_3(n, R; r) &= \int_{ r \le | z | \le R}    \Big(  h(z) - \frac{h(z)^2}{2} \Big) (\Pi_n^{\mathfrak{q}}(z,z)    - \Pi_n(z,z)) d\lambda_\phi(z) ;
\\
E_4(n, R; r) & =  \frac{1}{4}\|  [ h_r^R,  \Pi_n^{\mathfrak{q}}] \|_{HS}^2; \text{\, where $ [ h_r^R,  \Pi_n^{\mathfrak{q}}] =   h_r^R \Pi_n^{\mathfrak{q}} -  \Pi_n^{\mathfrak{q}}h_r^R$ is the commutator of $ h_r^R$ and $\Pi_n^{\mathfrak{q}}$.}
\end{split}
\end{align}
\end{proposition}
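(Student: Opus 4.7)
\medskip

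The plan is to chain two identities. The first is standard: for a finite rank kernel $\Pi_n^{\mathfrak{q}}$, the expectation of a multiplicative functional of the determinantal point process $\PP_{\Pi_n}^{\mathfrak{q}}$ is a Fredholm determinant. Writing $|\frac{x-p_i}{x-q_i}|^2 = 1 + $ (something with sign), after the usual signed-square-root trick this gives exactly $\Det(1+T_{n,R})$. Lemma \ref{lem-trace-class} ensures $T_{n,R}\in \mathscr{C}_1(\mathscr{H})$, so this Fredholm determinant is well defined in the classical sense. The second ingredient is the defining identity \eqref{f-reg-fred} of $\Det_3$ applied to $T_{n,R}$, which for trace-class operators yields
\begin{equation*}
\Det(1+T_{n,R}) = \Det_3(1+T_{n,R})\cdot\exp\!\Big(\tr(T_{n,R}) - \tfrac{1}{2}\tr(T_{n,R}^2)\Big).
\end{equation*}
So the proposition reduces to the algebraic identity
\begin{equation*}
\tr(T_{n,R}) - \tfrac{1}{2}\tr(T_{n,R}^2) \;=\; E_1(n,R;r) + E_2(n,R;r) + E_3(n,R;r) + E_4(n,R;r).
\end{equation*}

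To prove this identity, I would first expand the traces by the diagonal and kernel formulas, using that $\Pi_n^{\mathfrak{q}}$ is a self-adjoint projection so $\Pi_n^{\mathfrak{q}}(z,w)\Pi_n^{\mathfrak{q}}(w,z) = |\Pi_n^{\mathfrak{q}}(z,w)|^2$:
\begin{equation*}
\tr(T_{n,R}) = \int h_0^R(z)\,\Pi_n^{\mathfrak{q}}(z,z)\,d\lambda_\phi(z), \qquad \tr(T_{n,R}^2) = \iint h_0^R(z) h_0^R(w) |\Pi_n^{\mathfrak{q}}(z,w)|^2 d\lambda_\phi(z)d\lambda_\phi(w).
\end{equation*}
Then I would split $h_0^R = h_0^r + h_r^R$ and group terms. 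The pieces containing at least one factor $h_0^r$ combine to $E_1$ after identifying $\tr(\chi_0^r T_{n,R}^2) = \iint h_0^r(z) h_0^R(w)|\Pi_n^{\mathfrak{q}}(z,w)|^2 d\lambda_\phi$ and $\tr(h_r^R \Pi_n^{\mathfrak{q}} h_0^r \Pi_n^{\mathfrak{q}}) = \iint h_r^R(z) h_0^r(w) |\Pi_n^{\mathfrak{q}}(z,w)|^2 d\lambda_\phi$; this is pure bookkeeping of the decomposition.

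For the pure $h_r^R \otimes h_r^R$ piece I would use the commutator identity
\begin{equation*}
\tfrac{1}{4}\|[h_r^R,\Pi_n^{\mathfrak{q}}]\|_{HS}^2 = \tfrac{1}{2}\int h_r^R(z)^2 \Pi_n^{\mathfrak{q}}(z,z)\,d\lambda_\phi(z) - \tfrac{1}{2}\iint h_r^R(z) h_r^R(w)|\Pi_n^{\mathfrak{q}}(z,w)|^2 d\lambda_\phi,
\end{equation*}
in which the first equality on the right is the reproducing property $\int |\Pi_n^{\mathfrak{q}}(z,w)|^2 d\lambda_\phi(w) = \Pi_n^{\mathfrak{q}}(z,z)$ (valid since $\Pi_n^{\mathfrak{q}}$ is an orthogonal projection). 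This trades the double integral for $E_4$ plus a single integral, and the leftover is
\begin{equation*}
\int_{r\le |z|\le R}\!\Big(h(z) - \tfrac{1}{2}h(z)^2\Big)\Pi_n^{\mathfrak{q}}(z,z)\,d\lambda_\phi(z).
\end{equation*}
Since $E_2+E_3 = \int_{r\le |z|\le R}(h - \tfrac{1}{2}h^2)\Pi_n^{\mathfrak{q}}(z,z)d\lambda_\phi + \int_{r\le |z|\le R}(\kappa(\mathfrak{p},z) - \kappa(\mathfrak{q},z))\Pi_n(z,z)d\lambda_\phi$, the identity will follow once I show that the second integral vanishes.

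The vanishing of this $\kappa$-integral is the one conceptually delicate point and is where the radial hypothesis enters. By \eqref{taylor-kernel} one has $\Pi_n(z,z) = \sum_{k=0}^{n-1} a_k^2 |z|^{2k}$, a radial function, while each of the four monomials $p/z,\bar p/\bar z, p^2/(2z^2), \bar p^2/(2\bar z^2)$ constituting $\kappa(p,z)$ is, on any circle $|z| = \rho$, a nonzero Fourier mode $e^{\pm i\theta}$ or $e^{\pm 2i\theta}$; their angular average is zero, so the double integral in polar coordinates $(\rho,\theta)$ vanishes on every annulus. The reason one inserts this identically zero term is precisely to get the clean splitting into $E_2$ (which only depends on $\Pi_n$ and is, modulo radial cancellations, the regularization encountered in Proposition~\ref{prop-im}) and $E_3$ (a finite-rank correction $\Pi_n^{\mathfrak{q}} - \Pi_n$), so that each piece can be analyzed separately in the subsequent $R\to\infty$ and $n\to\infty$ limits. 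The main obstacle is therefore not depth but arranging the algebra so that the asymmetry between the single and double integrals cancels exactly via the commutator identity and the radial vanishing; once these two observations are in hand, the rest is careful accounting.
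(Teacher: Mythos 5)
Your proposal is correct and follows essentially the same route as the paper: reduce to the trace identity $\tr(T_{n,R}) - \tfrac12\tr(T_{n,R}^2) = \sum E_i$, split $h_0^R = h_0^r + h_r^R$, collect the mixed terms into $E_1$, convert the pure $h_r^R\otimes h_r^R$ double integral into $E_4$ plus a single integral via the commutator identity, and absorb the remaining $\kappa$-integral using the radial vanishing. The paper phrases the bookkeeping in operator-trace language (first establishing \eqref{trace-square} as a separate claim and decomposing via $1 = \chi_0^r + \chi_r^\infty$) whereas you work directly with the double-integral kernel expansions, but these are the same computation; your identification of the commutator identity and the radial cancellation as the two load-bearing steps is exactly right.
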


{\flushleft \bf{Remark.} }
Using the notation \eqref{int-sum-dec}, we can write
\begin{align}\label{F-n-R}
I(n, R)=  \sum_{i=1}^4 E_i (n, R; r).
\end{align}

Let us explain more precisely the meaning of these terms $E_i(n, R; r)$ as the decomposition summands  of the integral \eqref{int-sum-dec}.  The term $E_1(n, R; r)$ corresponds to the contribution of the neighbourhood of the poles of the function $h(z)$; the terms $E_2(n, R; r)$ and $E_3(n, R; r)$ together correspond to the the main contribution and the term  $E_4(n, R; r)$ corresponds to the contribution  at infinity. The estimate of $E_4(n, R; r)$, the contribution at infinity, will use in a crucial way  the radial assumption of the function $\phi$ and hence the radially-symmetric property of the kernel $\Pi$ and also its finite approximations $\Pi_n$.
Note that  for fixed $n\ge \ell$ and $R > r_{\mathfrak{p}, \mathfrak{q}}$,   we have a family of  decompositions \eqref{exp-decom} indexed by a real number $r$ that ranges in the open interval $(r_{\mathfrak{p}, \mathfrak{q}}, R)$.

\begin{proof}[Proof of Proposition \ref{prop-factor}]
We claim that for any $n \ge \ell$ and any $R> r$, we have
\begin{align}\label{trace-square}
\tr(( h_r^R  \Pi_n^{\mathfrak{q}})^2)  =   \int_{r \le  | z| \le R}  h  (z)^2 \Pi_n^{\mathfrak{q}} (z,z) d\lambda_\phi(z) -  \frac{1}{2}\| [ \Pi_n^{\mathfrak{q}},   \, h_r^R ] \|_{HS}^2 .
\end{align}
Indeed, we may write
\begin{align}\label{c-s-com}
 \| [ \Pi_n^{\mathfrak{q}},\,   h_r^R ] \|_{HS}^2 = \| \Pi_n^{\mathfrak{q}}     h_r^R \|_{HS}^2    + \|    h_r^R  \Pi_n^{\mathfrak{q}}  \|_{HS}^2 - 2 \Re ( \tr(  h_r^R  \Pi_n^{\mathfrak{q}} \cdot  h_r^R  \Pi_n^{\mathfrak{q}} )).
\end{align}
Observe that for any bounded real function $f$ on a measure space $(\Omega, \mu)$ and any finite rank orthogonal projection $P$ on $L^2(\Omega, \mu)$, we have $\tr(f P fP) \in \R$. Indeed, by using the identity $f = f\chi_{f\ge 0} - f\chi_{f< 0}$, it suffices to show that if $f_1, f_2$ are two non-negative bounded functions, then $\tr(f_1 P f_2 P) \in\R$. But this follows from the clear equality $\tr(f_1 P f_2 P) = \tr(f_1^{1/2} P f_2 P f_1^{1/2}) \ge 0$.  Now \eqref{c-s-com} can be written as
\begin{align}\label{exp-com}
 \| [ \Pi_n^{\mathfrak{q}},   h_r^R ] \|_{HS}^2  = 2 \|    h_r^R \Pi_n^{\mathfrak{q}}  \|_{HS}^2 - 2  \tr(  h_r^R  \Pi_n^{\mathfrak{q}} \cdot  h_r^R  \Pi_n^{\mathfrak{q}} )=  2 \int_{r \le  | z| \le R}  h  (z)^2 \Pi_n^{\mathfrak{q}} (z,z) d\lambda_\phi(z) - 2 \tr(( h_r^R  \Pi_n^{\mathfrak{q}})^2),
\end{align}
and \eqref{trace-square} follows.

Now since $T_{n, R}\in \mathscr{C}_1(\mathscr{H})$, the expectation of the corresponding multiplicative functional with respect to  $\PP_{\Pi_n}^{\mathfrak{q}}$ is given by Fredholm determinant:
\[
 \E_{\PP_{\Pi_n}^{\mathfrak{q}}} \Big[\prod_{x \in \mathscr{X}: | x| \le R} \prod_{i= 1}^\ell \Big| \frac{x-p_i}{x-q_i}\Big|^2\Big] = \Det(1 + T_{n, R}) = \Det_3(1 + T_{n, R})  \exp\Big( \tr (T_{n, R}) - \frac{1}{2} \tr (T_{n, R}^2) \Big).
\]
To prove Proposition \ref{prop-factor},  it suffices to prove that
\begin{align}\label{4-terms}
 \tr (T_{n, R}) - \frac{1}{2} \tr (T_{n, R}^2) = \sum_{i=1}^4 E_i(n, R; r) .
\end{align}
To this end, we first write $1 = \chi_0^r + \chi_r^\infty$, whence
\begin{align}\label{seperate-r}
&  \tr (T_{n, R}) - \frac{1}{2} \tr (T_{n, R}^2) =  \tr(\chi_0^r T_{n, R} ) - \frac{1}{2} \tr(\chi_0^r T_{n, R}^2) +  \tr(\chi_r^\infty T_{n, R} ) - \frac{1}{2} \tr(\chi_r^\infty T_{n, R}^2).
\end{align}
Note the clear equality
\begin{align}\label{trace-away}
\begin{split}
&  \tr(\chi_r^\infty T_{n, R} ) = \int_{r \le | z| \le R } h(z)  \Pi_n^{\mathfrak{q}}(z,z) d\lambda_\phi(z)
  \\
  & = \int_{r \le | z| \le R } h(z)  \Pi_n(z,z) d\lambda_\phi(z)  +\int_{r \le | z| \le R } h(z)  (\Pi_n^{\mathfrak{q}}(z,z)-  \Pi_n(z,z) )  d\lambda_\phi(z).
  \end{split}
\end{align}
Since $\tr(AB)=  \tr(BA)$, we obtain
\begin{align*}
\tr(\chi_r^\infty T_{n, R}^2)  = \tr(\chi_r^\infty T_{n, R}^2 \chi_r^\infty) = \tr\Big(\chi_r^\infty  \sgn( h_0^R) \sqrt{|  h_0^R|}   \Pi_n^{\mathfrak{q}}    h_0^R \Pi_n^{\mathfrak{q}}  \sqrt{|  h_0^R|}  \chi_r^\infty \Big) =  \tr(h_r^R   \Pi_n^{\mathfrak{q}}    h_0^R \Pi_n^{\mathfrak{q}}   ).
\end{align*}
Writing $h_0^R = h_0^r + h_r^R$ and applying equality \eqref{trace-square}, we get
\begin{align}\label{insert-com}
\begin{split}
& \tr(\chi_r^\infty T_{n, R}^2)  =  \tr(h_r^R   \Pi_n^{\mathfrak{q}}    h_r^R \Pi_n^{\mathfrak{q}}   ) + \tr(h_r^R   \Pi_n^{\mathfrak{q}}    h_0^r \Pi_n^{\mathfrak{q}}   )= \tr((h_r^R   \Pi_n^{\mathfrak{q}})^2 ) + \tr(h_r^R   \Pi_n^{\mathfrak{q}}    h_0^r \Pi_n^{\mathfrak{q}}  )
\\
= & \int_{r \le  | z| \le R}  h  (z)^2 \Pi_n^{\mathfrak{q}} (z,z) d\lambda_\phi(z) -  \frac{1}{2}\| [ \Pi_n^{\mathfrak{q}},   h_r^R ] \|_{HS}^2 +  \tr(h_r^R   \Pi_n^{\mathfrak{q}}    h_0^r \Pi_n^{\mathfrak{q}}  )
\\
=& \int_{r \le  | z| \le R}  h  (z)^2 \Pi_n (z,z) d\lambda_\phi(z)  + \int_{r \le  | z| \le R}  h  (z)^2 (\Pi_n^{\mathfrak{q}} (z,z)   - \Pi_n (z,z)) d\lambda_\phi(z)
\\
 & -  \frac{1}{2}\| [ \Pi_n^{\mathfrak{q}},   h_r^R ] \|_{HS}^2 +  \tr(h_r^R   \Pi_n^{\mathfrak{q}}    h_0^r \Pi_n^{\mathfrak{q}}  ).
\end{split}
\end{align}
Recall that since $\phi$ is radial we have
\begin{align}\label{kappa-vanish}
\int_{ r \le | z | \le R}   (\kappa(\mathfrak{p}, z) - \kappa(\mathfrak{q}, z)  ) \Pi_n(z,z)   d\lambda_\phi(z) =0.
\end{align}
The equalities \eqref{trace-away}, \eqref{insert-com} and  \eqref{kappa-vanish} together yield
\begin{align}\label{main-term}
\begin{split}
 \tr(\chi_r^\infty T_{n, R} ) - \frac{1}{2} \tr(\chi_r^\infty T_{n, R}^2)=&  \int_{r \le | z| \le R } \Big(h(z) -\frac{h(z)^2}{2}    + (\kappa(\mathfrak{p}, z) - \kappa(\mathfrak{q}, z) \Big)  \Pi_n(z,z) d\lambda_\phi(z)
 \\
 &+\int_{r \le | z| \le R }\Big(  h(z) - \frac{h(z)^2}{2} \Big)  (\Pi_n^{\mathfrak{q}}(z,z)-  \Pi_n(z,z) )  d\lambda_\phi(z)
\\
&  +\frac{1}{4}\| [ \Pi_n^{\mathfrak{q}},   h_r^R ] \|_{HS}^2  - \frac{1}{2}  \tr(h_r^R   \Pi_n^{\mathfrak{q}}    h_0^r \Pi_n^{\mathfrak{q}}  ).
\end{split}
\end{align}
Substituting \eqref{main-term} into \eqref{seperate-r}, we obtain the desired equality \eqref{4-terms}.

Proposition \ref{prop-factor} is proved completely.
\end{proof}

 Recalling notation  \eqref{notation-T}, for the regular factor $\Det_3(1  + T_{n, R})$ we have 
 \begin{lemma}\label{lem-S3}
The operator $T$ is in $\mathscr{C}_3(\mathscr{H})$.
\end{lemma}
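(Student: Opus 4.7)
The plan is to prove $T \in \mathscr{C}_3(\mathscr{H})$ by decomposing $T$ into a ``near'' piece supported where $h$ has its poles and a ``far'' piece where $h$ is uniformly small. Fix $r > r_{\mathfrak{p}, \mathfrak{q}}$ large enough that $\{q_1, \dots, q_\ell\} \subset \{|z| < r\}$, set $g := \sqrt{|h|}$, and decompose $g = g\chi_0^r + g\chi_r^\infty$. Expanding
\[
T = \sgn(h)\, g\, \Pi^{\mathfrak{q}}\, g = T_{\mathrm{near}} + T_{\mathrm{far}},
\]
where $T_{\mathrm{far}} := \sgn(h)\, g\chi_r^\infty\, \Pi^{\mathfrak{q}}\, g\chi_r^\infty$ and $T_{\mathrm{near}}$ collects the three summands containing at least one factor $g\chi_0^r$.

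For $T_{\mathrm{near}}$, the same calculation as in the proof of Lemma \ref{lem-trace-class} shows that $g\chi_0^r\, \Pi^{\mathfrak{q}}$ is Hilbert-Schmidt: the bound $\Pi^{\mathfrak{q}}(z, z) \le C \prod_i |z - q_i|^2$ on $|z| \le r$ cancels the $|z - q_i|^{-2}$ singularities of $|h|$, so $\int_{|z| \le r} |h(z)|\, \Pi^{\mathfrak{q}}(z, z)\, d\lambda_\phi(z) < \infty$. Since $\|g\chi_r^\infty\|_\infty \le 1/\sqrt{2}$ by \eqref{close-1}, each of the three summands of $T_{\mathrm{near}}$ is a product of this Hilbert-Schmidt operator with a bounded one, and hence lies in $\mathscr{C}_2 \subset \mathscr{C}_3$.

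For $T_{\mathrm{far}}$, set $S := g\chi_r^\infty\, \Pi^{\mathfrak{q}}\, g\chi_r^\infty$ and $A := g\chi_r^\infty\, \Pi^{\mathfrak{q}}$; then $S = AA^*$, so $\|S\|_3 = \|A\|_6^2$, and it suffices to show $A \in \mathscr{C}_6(\mathscr{H})$, equivalently $A^*A = \Pi^{\mathfrak{q}}\, |h|\chi_r^\infty\, \Pi^{\mathfrak{q}} \in \mathscr{C}_3$. Since $|h(z)| = \mathcal{O}(1/|z|)$ as $|z| \to \infty$ by Lemma \ref{lem-kappa}, Lemma \ref{lem-o3} yields $\int_{|z| \ge r} |h(z)|^3\, \Pi^{\mathfrak{q}}(z, z)\, d\lambda_\phi(z) < \infty$, placing the positive Toeplitz symbol $|h|\chi_r^\infty$ in exactly the correct integrability class for $\mathscr{C}_3$-membership. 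Combined with the standard off-diagonal Gaussian estimate $|\Pi^{\mathfrak{q}}(z, w)|^2 e^{-2\phi(z) - 2\phi(w)} \le C e^{-c|z-w|^2}$ available for generalized Fock spaces with $m \le \Delta\phi \le M$, this implies finiteness of
\[
\tr(S^3) = \int\!\!\!\int\!\!\!\int |h(z_1)||h(z_2)||h(z_3)|\chi_r^\infty\, \Pi^{\mathfrak{q}}(z_1, z_2)\Pi^{\mathfrak{q}}(z_2, z_3)\Pi^{\mathfrak{q}}(z_3, z_1)\, d\lambda_\phi
\]
by a straightforward convolution estimate (integrating out two variables against the Gaussian).

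The main obstacle is the far piece: the naive Cauchy-Schwarz bound on $\|S\|_{HS}^2$ reduces to $\int_{|z| \ge r} |h(z)|\, \Pi^{\mathfrak{q}}(z, z)\, d\lambda_\phi$, which diverges like $\int_{|z| \ge r} |z|^{-1}\, d\lambda$, so $T$ is genuinely not Hilbert-Schmidt. This confirms the paper's introductory remark that one must work with $\mathscr{C}_3$ rather than $\mathscr{C}_2$ (which sufficed in the real-variable case of \cite{QB3}), and forces the use of off-diagonal kernel decay --- or equivalently the Schatten characterization of Toeplitz operators on generalized Fock spaces --- in place of the direct Hilbert-Schmidt argument used for $T_R$.
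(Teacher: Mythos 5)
You take a genuinely different route from the paper. The paper reduces $T\in\mathscr{C}_3$ to showing that $|h|^{1/2}\Pi^{\mathfrak q}|h|\Pi^{\mathfrak q}$ is Hilbert--Schmidt (its product with its adjoint equals $|T|^3$), decomposes that operator with the same near/far cutoffs, and controls the far piece through the commutator bound $\|[h,\chi_r^\infty\Pi^{\mathfrak q}\chi_r^\infty]\|_{HS}<\infty$ imported from \cite{QB3} (Lemma~\ref{lem-com-q}), which encodes the off-diagonal spread of $\Pi$ via the explicit radial computation recorded in Lemma~\ref{lem-real}. You instead work directly with $\tr(S^3)$ written out as a triple integral and appeal to pointwise off-diagonal decay of the reproducing kernel. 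Both arguments feed the same $L^3$ information (Lemma~\ref{lem-o3} together with $|h(z)|=\mathcal{O}(1/|z|)$ from the expansion in Lemma~\ref{lem-kappa}) into some control of off-diagonal mass; yours is more direct and does not visibly use radial symmetry, but relies on a pointwise kernel estimate the paper never states and that you would additionally need to transfer from $\Pi$ to its finite-rank perturbation $\Pi^{\mathfrak q}$.

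Two points need repair. First, for a weight satisfying only $m\le\Delta\phi\le M$, the correct off-diagonal estimate is \emph{exponential}, not Gaussian: Christ's theorem gives $|\Pi(z,w)|e^{-\phi(z)-\phi(w)}\le Ce^{-c|z-w|}$, and Gaussian decay is special to essentially quadratic $\phi$. Your convolution estimate survives unchanged with the exponential kernel (substitute $z_2=z_1+u$, $z_3=z_1+v$, apply H\"older with exponent $3$ in the $z_1$ variable, then integrate $e^{-c(|u|+|v|+|u-v|)}$ over $(u,v)$), so this is cosmetic but should be corrected. Second, the near piece: the summand $\sgn(h)\,g\chi_0^r\,\Pi^{\mathfrak q}\,g\chi_0^r$ is not of the form (Hilbert--Schmidt)$\cdot$(bounded), since $g\chi_0^r=\sqrt{|h|}\,\chi_0^r$ is unbounded at the points $q_i$. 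One must insert $\Pi^{\mathfrak q}=(\Pi^{\mathfrak q})^2$ and factor it as $\sgn(h)\bigl(g\chi_0^r\Pi^{\mathfrak q}\bigr)\bigl(\Pi^{\mathfrak q}g\chi_0^r\bigr)$, a product of two Hilbert--Schmidt operators (each finite because $\Pi^{\mathfrak q}$ vanishes at the $q_i$, cf.\ \eqref{near-q}), hence trace class. With these two repairs your argument is a valid alternative to the paper's proof.
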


\begin{proposition}\label{prop-order-det}
We have
\begin{align*}
&\lim_{R\to\infty} \Det_3(1+T_{n, R})  =  \Det_3(1+T_{n}); \,   \lim_{R\to\infty} \Det_3(1+T_{R})  =  \Det_3(1+T);
\\
&\lim_{n\to\infty} \Det_3(1+T_{n, R})  =  \Det_3(1+T_{R}); \,  \lim_{n\to\infty} \Det_3(1+T_{n})  =  \Det_3(1+T).
\end{align*}
In particular, we have
\[
\lim\limits_{n\to\infty}\lim\limits_{R\to\infty} \Det_3(1+T_{n, R})  = \lim\limits_{R\to\infty} \lim\limits_{n\to\infty}  \Det_3(1+T_{n, R}).
\]
\end{proposition}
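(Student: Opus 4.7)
The strategy is to lift four convergences of the underlying operators in the Schatten class $\mathscr{C}_3(\mathscr{H})$ to convergences of the regularized Fredholm determinants via the continuity estimate \eqref{cont-det} with $n=3$. Since by Lemma \ref{lem-S3} the operator $T$ lies in $\mathscr{C}_3$, all the Schatten-$3$ norms of $T_{n,R}, T_n, T_R, T$ will remain uniformly bounded throughout the limiting procedures, so the exponential factor in \eqref{cont-det} stays under control. It thus suffices to establish $T_{n,R}\to T_n$, $T_{n,R}\to T_R$, $T_R\to T$, and $T_n\to T$ in $\mathscr{C}_3$.

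Of these, the convergences $T_{n,R}\to T_n$ (as $R\to\infty$, $n$ fixed) and $T_{n,R}\to T_R$ (as $n\to\infty$, $R$ fixed) are immediate from Lemma \ref{lem-trace-class}, since $\mathscr{C}_1(\mathscr{H})$ embeds continuously into $\mathscr{C}_3(\mathscr{H})$ with $\|\cdot\|_3\le\|\cdot\|_1$. For the convergence $T_R\to T$ in $\mathscr{C}_3$ I use the decomposition
\[
T-T_R \;=\; \chi_R^\infty T + \chi_0^R T \chi_R^\infty,
\]
and appeal to Proposition \ref{prop-cp}: applied to $A = T\in\mathscr{C}_3$ with the increasing projections $\chi_0^{R_k}\uparrow I$ it gives $\|T\chi_{R_k}^\infty\|_3 = \|T-T\chi_0^{R_k}\|_3\to 0$; applied to $A=T^*\in\mathscr{C}_3$ and combined with $\|\cdot\|_3 = \|(\cdot)^*\|_3$ it gives $\|\chi_{R_k}^\infty T\|_3\to 0$.

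The remaining convergence $T_n\to T$ in $\mathscr{C}_3$ is the main obstacle, precisely because the multiplication operator $M$ by $\sqrt{|h|}$ is unbounded near the poles $q_i$ of $h$, so the naive bound $\|T-T_n\|_3\le\|M\|\,\|\Pi^{\mathfrak{q}}-\Pi_n^{\mathfrak{q}}\|_?\,\|M\|$ is useless. Instead, letting $N$ denote multiplication by $\sgn(h)$ and $Q_n = \Pi^{\mathfrak{q}}-\Pi_n^{\mathfrak{q}}$ (itself an orthogonal projection, since $\Pi_n^{\mathfrak{q}}\le\Pi^{\mathfrak{q}}$), I write $T-T_n = NMQ_nM$ and use $Q_n^2=Q_n$ together with the self-adjointness of $M$ and $Q_n$ to obtain the positive factorization
\[
MQ_nM \;=\; (MQ_n)(MQ_n)^*,
\]
which together with the Schatten identity $\|AA^*\|_3 = \|A\|_6^2$ and $\|N\|_\infty\le 1$ yields $\|T-T_n\|_3\le\|MQ_n\|_6^2$. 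The task thus reduces to showing $\|MQ_n\|_6\to 0$.

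To obtain this, I first establish that $M\Pi^{\mathfrak{q}}\in\mathscr{C}_6$: using $N^2 M = M$ one has $M\Pi^{\mathfrak{q}}M = NT$, which lies in $\mathscr{C}_3$ by Lemma \ref{lem-S3}, while the same positivity identity $M\Pi^{\mathfrak{q}}M = (M\Pi^{\mathfrak{q}})(M\Pi^{\mathfrak{q}})^*$ together with $\|AA^*\|_3 = \|A\|_6^2$ gives $\|M\Pi^{\mathfrak{q}}\|_6^2 = \|M\Pi^{\mathfrak{q}}M\|_3\le\|T\|_3 < \infty$. The ranges of the projections $\Pi_n^{\mathfrak{q}}$ are the polynomials in $\mathscr{F}_\phi$ of degree below $n$ vanishing on $\mathfrak{q}$; they form an increasing family of subspaces whose union is dense in the range of $\Pi^{\mathfrak{q}}$, so $\Pi_n^{\mathfrak{q}}\uparrow\Pi^{\mathfrak{q}}$ strongly with $\Pi_n^{\mathfrak{q}}\le\Pi^{\mathfrak{q}}$. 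Applying Proposition \ref{prop-cp} to the Schatten-$6$ operator $M\Pi^{\mathfrak{q}}$ and writing $MQ_n = (M\Pi^{\mathfrak{q}})\Pi^{\mathfrak{q}} - (M\Pi^{\mathfrak{q}})\Pi_n^{\mathfrak{q}}$, one concludes $\|MQ_n\|_6\to 0$. The estimate \eqref{cont-det} with $n=3$ then delivers all four stated convergences of $\Det_3$, and the commutation of limits follows at once.
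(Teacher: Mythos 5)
Your proposal is correct and follows essentially the same route as the paper's proof: reduce the determinant convergences to $\mathscr{C}_3$-convergences of the operators via \eqref{cont-det}, establish $\sqrt{|h|}\,\Pi^{\mathfrak{q}}\in\mathscr{C}_6$, and pass to the limits with Proposition \ref{prop-cp}. The only notable technical variation is in the $n\to\infty$ step: where the paper applies Proposition \ref{prop-cp} to $\sqrt{|h|}\,\Pi_n^{\mathfrak{q}}=\sqrt{|h|}\,\Pi^{\mathfrak{q}}\cdot\Pi_n^{\mathfrak{q}}$ in $\mathscr{C}_6$ and then invokes H\"older, you exploit the observation that $Q_n=\Pi^{\mathfrak{q}}-\Pi_n^{\mathfrak{q}}$ is itself an orthogonal projection so that $MQ_nM=(MQ_n)(MQ_n)^*$ is positive and $\|T-T_n\|_3\le\|MQ_n\|_6^2$, collapsing the H\"older/triangle-inequality bookkeeping into a single identity. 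You also deduce the $R\to\infty$ convergence of $T_{n,R}$ from the stronger $\mathscr{C}_1$ statement in Lemma \ref{lem-trace-class} instead of from Proposition \ref{prop-cp}; this is sound since $\|\cdot\|_3\le\|\cdot\|_1$. These are minor streamlinings, not a different method.
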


Lemma \ref{lem-S3} and Proposition \ref{prop-order-det} will be proved in \S \ref{sub-sec-regular}.

 Recall the definition $I(n, R)$ in \eqref{int-sum-dec} and the decomposition of $I(n, R)$:
  \[
  I(n, R) =  E_1 (n, R; r) +  E_2 (n, R; r)  + E_3 (n, R; r) +  E_4 (n, R; r),
  \]
  where $E_1 (n, R; r), E_2 (n, R; r), E_3 (n, R; r),  E_4 (n, R; r)$ are  given in Proposition \ref{prop-factor}. Recall also the choice of $r_{\mathfrak{p}, \mathfrak{q}}>0$ in \eqref{close-1}.
The regularization factor $\exp(\tr(T_{n,R}) - \frac{1}{2} \tr(T_{n, R}^2))$ is controlled as follows.

  \begin{proposition}\label{prop-I-n-R}
Both the limits $\lim\limits_{R\to\infty} \lim\limits_{n\to\infty} I(n, R)$ and $\lim\limits_{n\to\infty} \lim\limits_{R\to\infty}  I (n, R)$ exist and we have
\begin{align}\label{}
\lim\limits_{R\to\infty} \lim\limits_{n\to\infty} I(n, R) =  \lim\limits_{n\to\infty} \lim\limits_{R\to\infty}  I (n, R).
\end{align}
  \end{proposition}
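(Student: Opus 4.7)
The plan is to exploit the decomposition $I(n,R) = \sum_{i=1}^4 E_i(n,R;r)$ from \eqref{F-n-R} and handle each summand separately, showing that the two iterated limits exist and agree for every $i$; summing then yields the proposition. The four summands were tailored so that $E_1$ isolates the contribution near the poles of $h$, $E_2$ and $E_3$ carry the bulk contribution after cancellation against $\kappa(\mathfrak{p},\cdot) - \kappa(\mathfrak{q},\cdot)$, and $E_4$ encodes the contribution at infinity where the slow decay $h(z) = O(1/|z|)$ is at issue.

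For $E_1(n,R;r)$, each constituent involves either the compactly supported cutoff $\chi_0^r$ or the bounded, compactly supported factor $h_0^r$, together with the uniformly bounded $h_r^R$ (by \eqref{close-1}). Combined with the pointwise convergences $\Pi_n \to \Pi$, $\Pi_n^{\mathfrak{q}} \to \Pi^{\mathfrak{q}}$ and with Christ's estimate $\Pi_n(z,z) \le \Pi(z,z) \le C e^{2\phi(z)}$ from \cite[Theorem 3.1]{QB3}, dominated convergence runs in either order and produces the same common limit. Proposition \ref{prop-cp} handles the trace class assertions for the kernel products.

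For $E_2(n,R;r)$, the decisive input is Lemma \ref{lem-kappa}, which gives $h(z) - h(z)^2/2 + (\kappa(\mathfrak{p},z) - \kappa(\mathfrak{q},z)) = O(|z|^{-3})$ uniformly as $|z|\to\infty$. Together with Lemma \ref{lem-o3} and the bound $\Pi_n(z,z) \le \Pi(z,z)$, this furnishes an integrable dominant on $\{|z|\ge r\}$ independent of $n$ and $R$, so dominated convergence yields both iterated limits as the common value $\int_{|z|\ge r}(h(z) - h(z)^2/2 + \kappa(\mathfrak{p},z) - \kappa(\mathfrak{q},z))\Pi(z,z) d\lambda_\phi(z)$. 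The term $E_3(n,R;r)$ is handled by the same scheme: $\Pi_n^{\mathfrak{q}} - \Pi_n$ is a rank-$\le \ell$ correction whose diagonal decays like $|z|^{-2}$ at infinity (extracted from the $|z-q_i|^{-2}$ factors in the successive Palm reductions), while $h(z) - h(z)^2/2$ is bounded on $\{|z|\ge r\}$.

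The main obstacle is $E_4(n,R;r) = \frac{1}{4}\|[h_r^R,\Pi_n^{\mathfrak{q}}]\|_{HS}^2$. Splitting $\Pi_n^{\mathfrak{q}} = \Pi_n + (\Pi_n^{\mathfrak{q}} - \Pi_n)$ reduces the task to controlling $[h_r^R,\Pi_n]$, since the second commutator involves only the finite-rank piece $\Pi_n - \Pi_n^{\mathfrak{q}}$ and yields Hilbert-Schmidt norms uniformly controllable in $n, R$. The radial assumption enters crucially here: in the orthonormal basis $\{a_k z^k\}$ of $\mathscr{F}_\phi$ provided by \eqref{taylor-kernel}, the kernel $\Pi_n$ is diagonal and $\|[h_r^R,\Pi_n]\|_{HS}^2$ expands as a double sum whose $(j,k)$-entry depends on the $(j-k)$-th angular Fourier coefficient of $h_r^R$ integrated against a radial weight. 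Because $h$ decays only like $|z|^{-1}$ at infinity, these angular coefficients are merely conditionally summable in the radial variable, so passing to the iterated limits will require delicate cancellation, summation by parts, and careful term-by-term estimates; this is the content of the forthcoming Lemma \ref{lem-E4}. Once uniform bounds and pointwise convergence of the matrix entries are in place, dominated convergence closes the argument for $E_4$, completing the proof.
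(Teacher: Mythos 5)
Your plan — to show, for each fixed $r$, that every one of the four summands $E_i(n,R;r)$ has existing and agreeing iterated limits, and then sum — does not match what Lemmata~\ref{lem-E3} and~\ref{lem-E4} actually deliver, and for $E_4$ this reading is unlikely to be salvageable. Those two lemmata establish \emph{uniform smallness}: for $r\ge r_\varepsilon$ one has $\sup_{n\ge\ell,\,R>r}|E_3(n,R;r)|\le\varepsilon$ and likewise for $E_4$. They do \emph{not} claim that, for a fixed $r$, $\lim_R\lim_n E_4(n,R;r)$ and $\lim_n\lim_R E_4(n,R;r)$ exist and agree. In fact, passing $n\to\infty$ in $\|[h_r^\infty,\Pi_n^{\mathfrak{q}}]\|_{HS}^2$ for fixed $r$ would require an argument that $[h_r^\infty,\Pi_n^{\mathfrak{q}}]\to[h_r^\infty,\Pi^{\mathfrak{q}}]$ in Hilbert--Schmidt norm, which is not provided by the explicit radial computation in the proof of Lemma~\ref{lem-E4} (that computation only yields $I_2(n,r)\le I_2(r)+r^{-2}$, a one-sided uniform bound, not convergence); your appeal to ``dominated convergence once pointwise convergence of matrix entries is in place'' hides precisely this difficulty, which is the hardest point of the whole argument.

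The paper's actual logic for Proposition~\ref{prop-I-n-R} is different and your proposal is missing one of its two pillars. First, the \emph{existence} of both iterated limits of $I(n,R)$ is obtained separately, not by summing the $E_i$: one divides the factorization $\E_{\PP_{\Pi_n}^{\mathfrak q}}[\cdots]=\Det_3(1+T_{n,R})\exp(I(n,R))$ of Proposition~\ref{prop-factor} by the $\Det_3$ factor, whose iterated limits exist by Proposition~\ref{prop-order-det}, while the iterated limits of the left-hand side exist by Proposition~\ref{prop-finite} and the relation~\eqref{R-n}. Second, once existence is known, the equality follows by the $\varepsilon$-splitting $I=I_1+I_2$ with $I_1=E_1+E_2$ (whose iterated limits agree for any fixed $r$, by Lemmata~\ref{lem-E1},~\ref{lem-E2}) and $I_2=E_3+E_4$ (uniformly $\le\varepsilon$ for $r\ge r_\varepsilon$, by Lemmata~\ref{lem-E3},~\ref{lem-E4}); then the two iterated limits of $I$ differ by at most $2\varepsilon$, which forces equality. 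Your proposal omits the separate existence step entirely and replaces the uniform-smallness role of $E_3,E_4$ by an exchange-of-limits claim that is neither proved in the paper nor sketched convincingly in your write-up. As a smaller point, the asserted $|z|^{-2}$ decay of the diagonal of $\Pi_n-\Pi_n^{\mathfrak q}$ in your discussion of $E_3$ is unjustified (and unnecessary: Lemma~\ref{lem-E3} only needs that $\Pi_n-\Pi_n^{\mathfrak q}$ has total mass $\ell$ and that $\sup_{|z|\ge r}|h-h^2/2|\to0$).
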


Proposition \ref{prop-I-n-R} follows from Lemmata \ref{lem-E1}, \ref{lem-E2} , \ref{lem-E3} , \ref{lem-E4}, formulated below and  proved in \S \ref{sub-sec-4E}.

  \begin{lemma}\label{lem-E1}
  For any $r>r_{\mathfrak{p}, \mathfrak{q}}$, we have
\begin{align*}
\lim_{R\to\infty} \lim_{n\to\infty} E_1 (n, R; r) =  \lim_{n\to\infty} \lim_{R\to\infty} E_1 (n, R; r) &=    \tr(\chi_0^r T \chi_0^r ) - \frac{1}{2} \tr(\chi_0^r T^2 \chi_0^r)     - \frac{1}{2}  \tr(h_r^\infty   \Pi^{\mathfrak{q}}    h_0^r \Pi^{\mathfrak{q}}  )  .
\end{align*}
  \end{lemma}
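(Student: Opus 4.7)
My plan is to first put $E_1(n,R;r)$ into a cleaner closed form by kernel manipulation, and then analyze each resulting piece. Using cyclicity of the trace together with the identity $\sqrt{|h|}\,\chi_0^S\,\sgn(h)\sqrt{|h|} = h_0^S$ (valid for any truncation $S>0$) and $T_{n,R} = \chi_0^R T_n \chi_0^R$, I compute
\[
\tr(\chi_0^r T_{n,R}) = \tr(h_0^r \Pi_n^{\mathfrak{q}}), \qquad \tr(\chi_0^r T_{n,R}^2) = \tr(h_0^r \Pi_n^{\mathfrak{q}} h_0^r \Pi_n^{\mathfrak{q}}) + \tr(h_r^R \Pi_n^{\mathfrak{q}} h_0^r \Pi_n^{\mathfrak{q}}),
\]
where the second formula uses the splitting $h_0^R = h_0^r + h_r^R$ and Hermiticity of $\Pi_n^{\mathfrak{q}}$. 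Plugging in,
\[
E_1(n,R;r) = \tr(h_0^r \Pi_n^{\mathfrak{q}}) - \tfrac{1}{2}\tr(h_0^r \Pi_n^{\mathfrak{q}} h_0^r \Pi_n^{\mathfrak{q}}) - \tr(h_r^R \Pi_n^{\mathfrak{q}} h_0^r \Pi_n^{\mathfrak{q}}),
\]
and the target limit admits the same decomposition with $\Pi^{\mathfrak{q}}$ in place of $\Pi_n^{\mathfrak{q}}$ and $h_r^\infty$ in place of $h_r^R$. Therefore it suffices to prove convergence of each of the three summands in the two orders of iterated limits.

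The first two summands are $R$-independent and are integrals over $\{|z|\le r\}$ (resp.\ $\{|z|,|w|\le r\}$). The crucial integrability ingredient is the factorization $\Pi^{\mathfrak{q}}(z,z) = \prod_{i=1}^\ell |z-q_i|^2\,\widetilde{\Pi}(z,z)$ where $\widetilde{\Pi}$ is the reproducing kernel of the reduced Fock space of functions vanishing at $\mathfrak{q}$; this cancels exactly the $|z-q_i|^{-2}$ singularity of $h$. Combined with the monotone bound $\Pi_n^{\mathfrak{q}}(z,z) \le \Pi^{\mathfrak{q}}(z,z)$ (inherited from $\Pi_n \le \Pi$ and preserved by the Palm reduction) and the Cauchy--Schwarz estimate $|\Pi_n^{\mathfrak{q}}(z,w)|^2 \le \Pi_n^{\mathfrak{q}}(z,z)\Pi_n^{\mathfrak{q}}(w,w)$, dominated convergence disposes of these two terms as $n\to\infty$.

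The genuine obstacle is the cross-term $\tr(h_r^R \Pi_n^{\mathfrak{q}} h_0^r \Pi_n^{\mathfrak{q}})$, which depends on both $n$ and $R$. A direct kernel bound fails because $h_0^r \Pi^{\mathfrak{q}}$ is not Hilbert--Schmidt: the $|z-q_j|^{-4}$ pole of $h^2$ is only partially killed by one factor of $\Pi^{\mathfrak{q}}(z,z) \sim |z-q_j|^2$. My remedy is the symmetrized factorization
\[
\tr(h_r^R \Pi_n^{\mathfrak{q}} h_0^r \Pi_n^{\mathfrak{q}}) = \tr(Y_n(R) X_n), \quad X_n := \sqrt{|h_0^r|}\,\Pi_n^{\mathfrak{q}}, \quad Y_n(R) := h_r^R \Pi_n^{\mathfrak{q}} \sgn(h_0^r)\sqrt{|h_0^r|}.
\]
Now both operators are Hilbert--Schmidt uniformly in $n$ and $R$: $\|X_n\|_{HS}^2 = \int \chi_0^r |h|\,\Pi_n^{\mathfrak{q}}(z,z)\, d\lambda_\phi$ is finite because the square root halves the pole order (so $|h|\cdot\Pi^{\mathfrak{q}}(z,z)$ is bounded near each $q_j$), while $\|Y_n(R)\|_{HS}^2 \le \|X_n\|_{HS}^2 \cdot \int_{r\le|z|\le R} h(z)^2 \Pi^{\mathfrak{q}}(z,z)\,d\lambda_\phi$ is controlled by Lemma~\ref{lem-kappa} ($|h(z)|^2 = O(|z|^{-2})$ at infinity) together with Christ's estimate $\Pi(z,z)e^{-2\phi(z)} \le C$, giving a finite tail integral.

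For convergence, $X_n \to X_\infty := \sqrt{|h_0^r|}\,\Pi^{\mathfrak{q}}$ and, uniformly in $R$, $Y_n(R) \to Y_\infty(R) := h_r^R \Pi^{\mathfrak{q}} \sgn(h_0^r)\sqrt{|h_0^r|}$ in Hilbert--Schmidt norm, because $\Pi^{\mathfrak{q}} - \Pi_n^{\mathfrak{q}}$ is itself an orthogonal projection, whose diagonal is nonnegative, dominated by $\Pi^{\mathfrak{q}}(z,z)$, and tends pointwise to $0$; dominated convergence applied to the integrals $\int|h_0^r|(\Pi^{\mathfrak{q}} - \Pi_n^{\mathfrak{q}})(z,z)d\lambda_\phi$ and $\int h_r^\infty(z)^2(\Pi^{\mathfrak{q}}-\Pi_n^{\mathfrak{q}})(z,z)d\lambda_\phi$ then yields the claim. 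The residual limit $Y_\infty(R) \to Y_\infty(\infty)$ in HS norm as $R\to\infty$ uses the same finite integral $\int h_r^\infty(z)^2 \Pi^{\mathfrak{q}}(z,z) d\lambda_\phi < \infty$. A standard telescoping estimate
\[
|\tr(Y_n(R)X_n) - \tr(Y_\infty(\infty)X_\infty)| \le \|Y_n(R)-Y_\infty(R)\|_{HS}\|X_n\|_{HS} + \|Y_\infty(R)\|_{HS}\|X_n-X_\infty\|_{HS} + \|Y_\infty(R)-Y_\infty(\infty)\|_{HS}\|X_\infty\|_{HS}
\]
then delivers convergence of the cross-term as $(n,R) \to (\infty,\infty)$ in either iterated order, completing the proof.
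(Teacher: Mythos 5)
Your proof is correct and takes a genuinely different route from the paper. The paper proves the lemma by establishing $\mathscr{C}_1(\mathscr{H})$-convergence of the three operators $\chi_0^r T_{n,R}\chi_0^r$, $\chi_0^r T_{n,R}^2\chi_0^r$, $h_r^R\Pi_n^{\mathfrak{q}}h_0^r\Pi_n^{\mathfrak{q}}$ directly, factoring each into Hilbert--Schmidt pieces and invoking Proposition~\ref{prop-cp} (which rests on Simon's Theorem~2.17, Gr\"umm's convergence theorem). You instead first simplify by cyclicity, reducing $E_1$ to the closed form
\[
E_1(n,R;r)=\tr(h_0^r\Pi_n^{\mathfrak{q}})-\tfrac12\tr(h_0^r\Pi_n^{\mathfrak{q}}h_0^r\Pi_n^{\mathfrak{q}})-\tr(h_r^R\Pi_n^{\mathfrak{q}}h_0^r\Pi_n^{\mathfrak{q}}),
\]
which makes transparent that the first two terms are $R$-independent. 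This structural observation is absent from the paper and is a genuine gain: it isolates the cross-term as the only place where a two-parameter limit interchange is required, and it lets you replace the appeal to Gr\"umm's theorem by an elementary dominated-convergence argument built on $\Pi_n^{\mathfrak{q}}\le\Pi^{\mathfrak{q}}$ (a consequence of $\Ran\Pi_n^{\mathfrak{q}}\subset\Ran\Pi^{\mathfrak{q}}$), the reproducing-kernel Cauchy--Schwarz inequality, and the diagonal bound $|h|\,\Pi^{\mathfrak{q}}(\cdot,\cdot)$ bounded near the poles. Your symmetrized splitting $Y_n(R)X_n$ of the cross-term, together with the uniform HS bounds and the three-step telescoping estimate, is exactly the mechanism needed, and the fact that $\Pi^{\mathfrak{q}}-\Pi_n^{\mathfrak{q}}$ is a projection (so $\|X_n-X_\infty\|_{HS}^2=\int|h_0^r|(\Pi^{\mathfrak{q}}-\Pi_n^{\mathfrak{q}})(z,z)\,d\lambda_\phi$, and a Cauchy--Schwarz kernel bound controls $\|Y_n(R)-Y_\infty(R)\|_{HS}$ uniformly in $R$) is correctly exploited. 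One small presentational caution: the formal expressions such as $\tr(h_0^r\Pi_n^{\mathfrak{q}})$ and the operator $h_0^r\Pi_n^{\mathfrak{q}}$ are not bounded operators on $L^2$ because $h_0^r$ has $|z-q_i|^{-2}$ poles; all such traces are to be read via the symmetrized/split representation you introduce ($X_n$, $Y_n(R)$ and their products), as is implicit in the paper's own notation. Also, the estimate $\|Y_n(R)\|_{HS}^2\le\|X_n\|_{HS}^2\int_{r\le|z|\le R}h^2\,\Pi^{\mathfrak{q}}(z,z)\,d\lambda_\phi$ should be justified by inserting $\Pi_n^{\mathfrak{q}}=\Pi_n^{\mathfrak{q}}\Pi_n^{\mathfrak{q}}$ and bounding the operator norm of $\Pi_n^{\mathfrak{q}}\sgn(h_0^r)\sqrt{|h_0^r|}$ by its HS norm $\|X_n\|_{HS}$, which you clearly intend but did not spell out. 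These are technicalities, not gaps.
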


 \begin{lemma}\label{lem-E2}
  For any $r>r_{\mathfrak{p}, \mathfrak{q}}$, we have
\begin{align}\label{2-int}
\lim_{R\to\infty} \lim_{n\to\infty} E_2 (n, R; r) =  \lim_{n\to\infty} \lim_{R\to\infty} E_2 (n, R; r) &= \int_{ | z | \ge r}    \Big( h(z) - \frac{h(z)^2}{2}   + (\kappa(\mathfrak{p}, z) - \kappa(\mathfrak{q}, z)) \Big) \Pi(z,z)   d\lambda_\phi(z) .
\end{align}
\end{lemma}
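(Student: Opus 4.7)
The plan is to reduce the statement to a single dominated convergence argument by first showing that the integrand
\[
F(z) := h(z) - \frac{h(z)^2}{2} + \bigl( \kappa(\mathfrak{p},z) - \kappa(\mathfrak{q},z) \bigr)
\]
decays like $1/|z|^3$ at infinity, and then exploiting the monotone pointwise convergence $\Pi_n(z,z) \nearrow \Pi(z,z)$ which follows directly from the radial Taylor expansion \eqref{taylor-kernel} and \eqref{finite-app}. The desired equality $\int_{|z|\ge r}F(z)\Pi(z,z)\, d\lambda_\phi(z)$ will then be obtained as the common limit, along either ordering of limits.

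Step 1 (Asymptotic expansion of $F$). Writing $u(z) := \kappa(\mathfrak{p},z) - \kappa(\mathfrak{q},z)$, one has $u(z) = \mathcal{O}(1/|z|)$ by \eqref{def-kappa}--\eqref{def-kappa-g}, uniformly as the points of $\mathfrak{p},\mathfrak{q}$ range over a bounded set. Lemma \ref{lem-kappa} asserts $\log(1+h(z)) = -u(z) + \mathcal{O}(1/|z|^3)$ as $|z|\to\infty$. Exponentiating and using the Taylor expansion of $e^{-u}$ through second order gives
\[
h(z) = -u(z) + \frac{u(z)^2}{2} + \mathcal{O}(1/|z|^3), \qquad h(z)^2 = u(z)^2 + \mathcal{O}(1/|z|^3),
\]
so that $F(z) = \mathcal{O}(1/|z|^3)$ uniformly as $|z|\to\infty$. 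Combined with Lemma \ref{lem-o3}, this yields $F\cdot\Pi(\cdot,\cdot) \in L^1(\{|z|\ge r\}, d\lambda_\phi)$.

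Step 2 (Monotonicity and domination). Since $\phi$ is radial, the expansion $\Pi(z,z) = \sum_{k\ge 0} a_k^2 |z|^{2k}$ from \eqref{taylor-kernel} and the partial sum $\Pi_n(z,z) = \sum_{k=0}^{n-1} a_k^2 |z|^{2k}$ from \eqref{finite-app} imply $0\le \Pi_n(z,z) \le \Pi(z,z)$ and $\Pi_n(z,z)\nearrow \Pi(z,z)$ pointwise. On the annulus $r \le |z| \le R$ the function $F$ is bounded (it is continuous and $h$ has no pole there since $r > r_{\mathfrak{p},\mathfrak{q}}$), while on $|z|\ge r$ the dominant $|F(z)|\,\Pi(z,z)$ constructed in Step 1 is integrable with respect to $d\lambda_\phi$.

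Step 3 (Interchanging the two limits). For the order $\lim_{R\to\infty}\lim_{n\to\infty}$: fix $R$ and apply dominated convergence on the compact set $\{r\le |z|\le R\}$, with dominant $\sup_{r\le |z|\le R}|F(z)|\cdot \Pi(z,z) \in L^1$, to get
\[
\lim_{n\to\infty} E_2(n,R;r) = \int_{r\le |z|\le R} F(z)\, \Pi(z,z)\, d\lambda_\phi(z).
\]
Then let $R\to\infty$ and invoke dominated convergence once more with dominant $|F|\Pi \cdot \mathds{1}_{\{|z|\ge r\}}\in L^1$ from Step 1, yielding the right-hand side of \eqref{2-int}. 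For the opposite order $\lim_{n\to\infty}\lim_{R\to\infty}$: fix $n$ and use $|F|\Pi_n \le |F|\Pi \in L^1(\{|z|\ge r\},d\lambda_\phi)$ as the common dominant for all $R$ to obtain $\lim_{R\to\infty} E_2(n,R;r) = \int_{|z|\ge r}F(z)\Pi_n(z,z)\,d\lambda_\phi(z)$; then $n\to\infty$ with the same dominant completes the argument.

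The only substantive point is Step 1, which relies entirely on Lemma \ref{lem-kappa} together with an elementary second-order Taylor expansion, so the proof is essentially bookkeeping once the $1/|z|^3$ decay is observed. No commutator or operator-theoretic estimate is needed here, in contrast with the treatment of $E_1$ and $E_4$.
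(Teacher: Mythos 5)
Your argument is correct and follows essentially the same route as the paper: establish the $\mathcal{O}(1/|z|^3)$ decay of the integrand $F$, then apply dominated convergence twice in each order using $0\le\Pi_n(z,z)\le\Pi(z,z)$ and $|F|\Pi\in L^1(\{|z|\ge r\},d\lambda_\phi)$. The paper packages the decay estimate as Lemma~\ref{lem-h-k} and proves it by directly expanding $h=\prod|1+\alpha_i|^2-1$, whereas you obtain the same estimate by exponentiating Lemma~\ref{lem-kappa}; this is a minor variation, not a different strategy.
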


   \begin{lemma}\label{lem-E3}
For any $\varepsilon>0$, there exists $r_\varepsilon>r_{\mathfrak{p}, \mathfrak{q}}$ such that  if $r\ge r_\varepsilon$, then
\begin{align}\label{unif-es}
\sup_{n\ge \ell, \, R >r}  | E_3(n, R; r)| \le \varepsilon.
\end{align}
  \end{lemma}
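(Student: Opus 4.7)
The plan is to combine two simple observations: (a) the operator difference $\Pi_n - \Pi_n^{\mathfrak{q}}$ is a positive operator of trace exactly $\ell$, so its diagonal values yield a uniform $L^1$ bound with the correct sign; (b) the scalar factor $h(z) - h(z)^2/2$ decays uniformly as $|z|\to\infty$. Multiplying these yields the claimed uniform smallness of $E_3(n,R;r)$.

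First I would establish the key positivity and trace identity. By the inductive definition $\Pi_n^{\mathfrak{q}} = (\cdots(\Pi_n^{q_1})^{q_2}\cdots)^{q_\ell}$, each step subtracts the rank-one operator with kernel $\Pi_n(\cdot,q_i)\overline{\Pi_n(\cdot,q_i)}/\Pi_n(q_i,q_i)$, which is non-negative on the diagonal. In fact, $\Pi_n^{\mathfrak{q}}$ is the orthogonal projection in $\mathrm{Ran}(\Pi_n)$ onto the codimension-$\ell$ subspace of functions vanishing at $q_1,\dots,q_\ell$, so $\Pi_n - \Pi_n^{\mathfrak{q}}$ is an orthogonal projection of rank exactly $\ell$. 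Hence $\Pi_n(z,z) - \Pi_n^{\mathfrak{q}}(z,z) \ge 0$ pointwise and
\[
\int_{\C} \bigl(\Pi_n(z,z) - \Pi_n^{\mathfrak{q}}(z,z)\bigr)\, d\lambda_\phi(z) = \tr(\Pi_n - \Pi_n^{\mathfrak{q}}) = n - (n-\ell) = \ell,
\]
for every $n\ge \ell$.

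Second, I would control the scalar factor. By Lemma \ref{lem-kappa}, together with the definition \eqref{def-kappa}--\eqref{def-kappa-g} of $\kappa$, one has $h(z) = -(\kappa(\mathfrak{p},z)-\kappa(\mathfrak{q},z)) + \mathcal{O}(1/|z|^3)$ as $|z|\to\infty$, so $|h(z)| = \mathcal{O}(1/|z|)$, and \eqref{close-1} gives $|h(z)|\le 1/2$ for $|z|\ge r_{\mathfrak{p},\mathfrak{q}}$. Consequently
\[
\sup_{|z|\ge r}\Bigl| h(z) - \tfrac{1}{2} h(z)^2 \Bigr| \xrightarrow{r\to\infty} 0.
\]
Given $\varepsilon>0$, choose $r_\varepsilon > r_{\mathfrak{p},\mathfrak{q}}$ so that this supremum is at most $\varepsilon/\ell$ for all $r\ge r_\varepsilon$.

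Finally, combining (a) and (b), for any $r\ge r_\varepsilon$, any $n\ge\ell$ and any $R>r$,
\begin{align*}
|E_3(n,R;r)| &\le \sup_{|z|\ge r}\Bigl| h(z) - \tfrac{1}{2}h(z)^2 \Bigr|\cdot \int_{r\le |z|\le R} \bigl(\Pi_n(z,z)-\Pi_n^{\mathfrak{q}}(z,z)\bigr)\, d\lambda_\phi(z) \\
&\le \frac{\varepsilon}{\ell} \cdot \ell = \varepsilon,
\end{align*}
which is precisely \eqref{unif-es}. I do not anticipate any real obstacle here: the argument is essentially the remark that a uniform $L^1$-control on $\Pi_n-\Pi_n^{\mathfrak{q}}$ (coming from trace $=\ell$) meshes perfectly with a uniform $L^\infty$-control on the factor $h - h^2/2$ outside a large disk.
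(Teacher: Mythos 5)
Your proof is correct and follows essentially the same argument as the paper: both rely on the observation that $\Pi_n - \Pi_n^{\mathfrak{q}}$ is positive on the diagonal with total integral $\ell$, together with the uniform decay $\sup_{|z|\ge r}|h(z)-\tfrac{1}{2}h(z)^2|\to 0$. One tiny imprecision: what Lemma \ref{lem-kappa} gives directly is $\log\prod_i\left|\tfrac{z-p_i}{z-q_i}\right|^2 = -(\kappa(\mathfrak{p},z)-\kappa(\mathfrak{q},z))+\mathcal{O}(1/|z|^3)$, not $h(z)$ itself, so exponentiating only yields $h(z)=-(\kappa(\mathfrak{p},z)-\kappa(\mathfrak{q},z))+\mathcal{O}(1/|z|^2)$; this is harmless here, since all you need is $h(z)=\mathcal{O}(1/|z|)$, which also follows from a direct expansion of the product.
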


  \begin{lemma}\label{lem-E4}
  For any $\varepsilon>0$, there exists $r_\varepsilon>r_{\mathfrak{p}, \mathfrak{q}}$ such that  if $r\ge r_\varepsilon$, then
\begin{align}\label{unif-es-4}
\sup_{n\ge \ell, \, R >r}  | E_4(n, R; r)| \le \varepsilon.
\end{align}
  \end{lemma}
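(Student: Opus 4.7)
The plan is to exploit the radial symmetry of $\Pi_n$ in a crucial way to achieve cancellation between angular Fourier modes. First I would show that the perturbation from $\Pi_n^{\mathfrak{q}}$ to $\Pi_n$ is negligible: writing $\Pi_n - \Pi_n^{\mathfrak{q}}$ as an iterated sum of rank-one terms built from $\Pi_n(\cdot, q_i)$, the commutator $[h_r^R, \Pi_n^{\mathfrak{q}} - \Pi_n]$ has Hilbert-Schmidt norm controlled by quantities of the form $\int_{|z| \ge r} |\Pi_n(z, q_i)|^2 d\lambda_\phi(z)$, which tend to zero as $r \to \infty$ uniformly in $n$: the identity $\int_0^{2\pi} |\Pi_n(\rho e^{i\theta}, q)|^2 d\theta = 2\pi \sum_{k<n} a_k^4 (\rho |q|)^{2k}$, combined with Gaussian-type concentration of $a_k^2 |q|^{2k}$ around the modes with typical radius $\sim |q|$, shows the mass of $|\Pi_n(\cdot, q)|^2$ past $|z|=r$ decays quickly in $r$, uniformly in $n$. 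So it suffices to bound $\sup_{n \ge \ell,\, R > r}\|[h_r^R, \Pi_n]\|_{HS}^2$.

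For the main term I would apply the commutator identity
\[
\|[h_r^R, \Pi_n]\|_{HS}^2 = 2 \int (h_r^R)^2 \Pi_n(z,z) \, d\lambda_\phi(z) - 2 \tr\bigl(h_r^R \Pi_n h_r^R \Pi_n\bigr)
\]
and expand $h$ in angular Fourier series $h(\rho e^{i\theta}) = \sum_{j \in \Z} h_j(\rho) e^{ij\theta}$. Lemma \ref{lem-kappa} together with the identity $h = \exp(\log \prod_i |(z-p_i)/(z-q_i)|^2) - 1$ yields $h_0(\rho) = O(\rho^{-2})$ (crucially no $\rho^{-1}$ term, because the angular mean of $\log\prod_i|(z-p_i)/(z-q_i)|^2$ vanishes at large $|z|$), $h_{\pm 1}(\rho) = O(\rho^{-1})$, and $h_j(\rho) = O(\rho^{-|j|})$ for $|j|\ge 2$. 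Since $\Pi_n$ is diagonal in the orthonormal basis $\phi_k = a_k z^k$ of $\mathscr{F}_\phi$ and multiplication by $e^{ij\theta}$ shifts the mode index by $j$, the angular modes decouple, giving $\|[h_r^R, \Pi_n]\|_{HS}^2 = \sum_{j \in \Z} L_j^n$ with each $L_j^n \ge 0$. For $j = 0$ one finds $L_0^n = 2\sum_{k<n} \Var_{\mu_k}(h_0 \chi_r^R)$ with $d\mu_k(\rho) = 2\pi a_k^2 \rho^{2k+1} e^{-2\phi(\rho)} d\rho$, and Christ's estimate $\sum_k d\mu_k \le C \cdot 2\pi \rho\, d\rho$ combined with $|h_0|^2 = O(\rho^{-4})$ gives $L_0^n = O(r^{-2})$. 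For $|j|\ge 2$ the naive bound $L_j^n \le 2\sum_k \int|h_j|^2 \chi_r^R d\mu_k = O(r^{-2|j|+2})$ sums over $|j|\ge 2$ to $O(r^{-2})$.

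The main obstacle is the critical case $|j|=1$: the slow $\rho^{-1}$ decay of $h_{\pm 1}$ makes the diagonal bound $\sum_k \int|h_1|^2 \chi_r^R d\mu_k$ divergent, so one must exploit the exact cancellation with the trace term. With the auxiliary probability measure $d\tilde\mu_k(\rho) = \rho^{2k} e^{-2\phi}/\gamma_k\, d\rho$ where $\gamma_k = \int_0^\infty \rho^{2k} e^{-2\phi}d\rho$, and using the moment identity $\int_0^\infty \rho^{2k-1} e^{-2\phi}d\rho = \beta_{k-1}$ with $\beta_k = \int_0^\infty \rho^{2k+1} e^{-2\phi}d\rho$, the combined contribution rewrites as
\[
L_1^n + L_{-1}^n = 4\sum_{k=1}^{n-1}\Big[ \int |h_1|^2 \chi_r^R\, d\mu_k \; - \; \theta_k \Big|\int h_1 \chi_r^R\, d\tilde\mu_k\Big|^2 \Big],
\]
where $\theta_k = \gamma_k^2/(\beta_k \beta_{k-1}) \le 1$ by Cauchy-Schwarz. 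Hypothesis \eqref{sub-h} forces $\mu_k$ to concentrate sharply around a typical radius $r_k \to \infty$ with spread bounded independently of $k$, and it forces both $\tilde\mu_k$ to be close to $\mu_k$ and $1 - \theta_k$ to decay quantitatively in $k$; each summand then reduces to a variance plus a small error of size $O(r_k^{-4})$, and summing over $k$ with $r_k \gtrsim r$ yields $L_1^n + L_{-1}^n = O(r^{-2})$ uniformly in $n$ and $R$. Combining all contributions gives $\sup_{n \ge \ell,\, R > r}\|[h_r^R, \Pi_n^{\mathfrak{q}}]\|_{HS}^2 = O(r^{-2})$, which proves the lemma.
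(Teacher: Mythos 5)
Your proposal takes a genuinely different route from the paper. Both approaches begin with the same two reductions (drop the $R$-cutoff and replace $\Pi_n^{\mathfrak{q}}$ by $\Pi_n$, using $\|\Pi_n-\Pi_n^{\mathfrak{q}}\|_{HS}\le\sqrt\ell$ and $\|h_r^\infty\|_\infty\to 0$), but they then diverge. The paper replaces the slowly decaying $h$ by the simpler function $1/z$ via Lemma~\ref{lem-h-h} ($|h(z)-h(w)|\le C|1/z-1/w|$), and reduces everything to two concrete quantities $I_1(n,r)$ and $I_2(n,r)$ involving the kernel $\Pi_n$. These are then dominated, uniformly in $n$, by their $n=\infty$ counterparts plus a $1/r^2$ error, through a purely algebraic rearrangement: the integrand in $I_2(n,r)$ is rewritten as $a_0^4\rho^{-2}+a_{n-1}^4\rho^{2n-2}\sigma^{2n-4}+\sum_{k=0}^{n-2}(a_{k+1}^2\rho^k\sigma^{k+1}-a_k^2\rho^k\sigma^{k-1})^2$, a sum of nonnegative terms, whose ``boundary term'' is controlled by the single moment identity $a_{n-1}^{-2}=2\pi\int_0^\infty\rho^{2n-1}e^{-2\phi}\,d\rho$. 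No information about the shape or concentration of the weights $\rho^{2k+1}e^{-2\phi}d\rho$ is needed beyond Christ's estimate for $\sum_k a_k^2\rho^{2k}e^{-2\phi}$. Your proposal instead expands $h$ in angular Fourier modes, observes (correctly) that $\Pi_n$ decouples the modes, and, for the critical mode $|j|=1$, exploits the cancellation $\int|h_1|^2\,d\mu_k-\theta_k\,|\int h_1\,d\tilde\mu_k|^2$ with $\theta_k=\gamma_k^2/(\beta_k\beta_{k-1})$. This is in the same spirit as the paper's ``sum of squares'' regrouping, and is plausible.

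Where the proposal has a genuine gap is in the $|j|=1$ analysis. You state that \eqref{sub-h} forces $\mu_k$ to concentrate sharply around a radius $r_k\to\infty$ with spread bounded independently of $k$, that $1-\theta_k$ decays like $r_k^{-2}$, that replacing $\tilde\mu_k$ by $\mu_k$ introduces an error of size $O(r_k^{-4})$, and that summing the resulting $O(r_k^{-4})$ over $k$ with $r_k\gtrsim r$ gives $O(r^{-2})$. None of these is established, and together they are the analytic heart of the argument. They can be proved under \eqref{sub-h} (e.g.\ the log-density of $\rho^{2k+1}e^{-2\phi}$ has second derivative $-(2k+1)/\rho^2-2\phi''$, which at the critical radius equals $-2\Delta\phi\in[-2M,-2m]$, giving the concentration; the spacing estimate $r_{k+1}-r_k\sim 1/r_k$ follows from $\phi'(\rho)\asymp\rho$), but this is a substantial amount of work, comparable to the rest of the proof, and is precisely what the paper's Claims~A and~B are designed to sidestep. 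There is also a smaller omission: the uncancelled boundary terms $\int|h_1|^2\chi_r^R\,d\mu_{n-1}$ and $\int|h_{-1}|^2\chi_r^R\,d\mu_0$ (the modes $m=n-1$ and $m=0$, where $m+j$ falls outside $\{0,\dots,n-1\}$) must be estimated separately; they play the same role as the paper's $a_{n-1}^4\rho^{2n-2}\sigma^{2n-4}$ boundary term and do contribute $O(r^{-2})$, but they need to be mentioned and bounded.
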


We now derive Proposition \ref{prop-I-n-R} from Lemmata \ref{lem-E1}, \ref{lem-E2}, \ref{lem-E3}, \ref{lem-E4}.

  \begin{proof}[Proof of Proposition \ref{prop-I-n-R}]
  First,  by Propositions \ref{prop-finite}, \ref{prop-factor}, \ref{prop-order-det}, the limit $ \lim\limits_{n\to\infty} \lim\limits_{R\to\infty} I(n, R)$ exists and by the equality \eqref{R-n} and by Propositions  \ref{prop-factor}, \ref{prop-order-det},  the limit $ \lim\limits_{R\to\infty}  \lim\limits_{n\to\infty} I(n, R)$ exists.

   Now by Lemmata \ref{lem-E1}, \ref{lem-E2}, \ref{lem-E3} and \ref{lem-E4},  for any $\varepsilon>0$, there exists $r_{\varepsilon}>0$, such that we may write
\[
I(n, R) = I_1(n, R; \varepsilon) + I_2(n, R; \varepsilon)
\]
 in such a way that
\begin{align}
 \lim_{n\to\infty} \lim_{R\to\infty} I_1(n, R; \varepsilon) =   \lim_{R\to\infty}  \lim_{n\to\infty} I_1(n, R; \varepsilon)  \an | I_2(n, R; \varepsilon) | \le \varepsilon  \text{ for any $n\ge \ell$ and $R >r_\varepsilon$}.
\end{align}
For any $\varepsilon>0$, let us denote
\[
I_1(\varepsilon) : =\lim_{n\to\infty} \lim_{R\to\infty} I_1(n, R; \varepsilon) =   \lim_{R\to\infty}  \lim_{n\to\infty} F_1(n, R; \varepsilon).
\]
Then we have
\[
| \lim_{n\to\infty} \lim_{R\to\infty} I(n, R) - I_1(\varepsilon) | \le \varepsilon \an
| \lim_{R\to\infty} \lim_{n\to\infty}  I(n, R) - I_1(\varepsilon) | \le \varepsilon.
\]
Consequently, we obtain that
\[
 \lim_{R\to\infty} \lim_{n\to\infty}  I(n, R) = \lim_{n\to\infty}  \lim_{R\to\infty}   I(n, R).
\]
Proposition \ref{prop-I-n-R} is proved  completely.
  \end{proof}

 \begin{proof}[Proof of Proposition \ref{prop-order}]
By \eqref{exp-decom}, we have
\begin{align*}
 \E_{\Pi_n^{\mathfrak{q}}}  \Big[\prod_{x \in \mathscr{X}: | x| \le R} \prod_{i= 1}^\ell \Big| \frac{x-p_i}{x-q_i}\Big|^2\Big]  =  \Det_3(1 + T_{n, R}) \cdot \exp[I(n, R)].
\end{align*}
Now by Proposition \ref{prop-order-det} and Proposition \ref{prop-I-n-R}, we may exchange the two limits as $n\to\infty$ and as $R\to\infty$ and get the desired equality
\begin{align*}
 \lim_{R \to\infty}  \lim_{n\to\infty}  \E_{\Pi_n^{\mathfrak{q}}}  \Big[\prod_{x \in \mathscr{X}: | x| \le R} \prod_{i= 1}^\ell \Big| \frac{x-p_i}{x-q_i}\Big|^2\Big]  = \lim_{n\to\infty}\lim_{R \to\infty}  \E_{\Pi_n^{\mathfrak{q}}}  \Big[\prod_{x \in \mathscr{X}: | x| \le R} \prod_{i= 1}^\ell \Big| \frac{x-p_i}{x-q_i}\Big|^2\Big].
\end{align*}
\end{proof}

 \subsubsection{Control of the regular part}\label{sub-sec-regular}

In this section, we prove Lemma \ref{lem-S3} and Proposition \ref{prop-order-det}.

 \begin{lemma}[{\cite[Lemma 5.3]{QB3}}]\label{lem-com}
For any $r \ge r_{\mathfrak{p}, \mathfrak{q}}$, we have
\begin{align}\label{com-1}
\|[h,    \chi_r^\infty \Pi \chi_{r}^\infty ]\|_{HS}^2 = \iint_{\C^2}  | h(z) - h(w) |^2  | \Pi(z,w)|^2   \chi_r^\infty(z) \chi_r^\infty(w) d\lambda_\phi(z) d\lambda_\phi(w)  < \infty;
\end{align}
\begin{align}\label{com-3}
\|[h_r^\infty,    \Pi ]\|_{HS}^2 = \iint_{\C^2}  | h_r^\infty(z) - h_r^\infty(w) |^2 | \Pi(z,w)|^2   d\lambda_\phi(z) d\lambda_\phi(w) < \infty.
\end{align}
\end{lemma}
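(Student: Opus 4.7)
My plan is to address the identities and the finiteness separately.

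Both equalities are instances of the standard kernel computation for the commutator of a multiplication operator $M_f$ with an integral operator $K$ of kernel $K(z,w)$: the commutator $[M_f, K]$ has integral kernel $(f(z) - f(w))K(z,w)$, hence
\[
\|[M_f, K]\|_{HS}^2 = \iint_{\C \times \C} |f(z) - f(w)|^2 |K(z,w)|^2 \, d\lambda_\phi(z)\, d\lambda_\phi(w).
\]
Applied with $f = h$ and $K$ the integral operator with kernel $\chi_r^\infty(z)\Pi(z,w)\chi_r^\infty(w)$ this gives \eqref{com-1}; applied with $f = h_r^\infty$ and $K = \Pi$ it gives \eqref{com-3}. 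Boundedness of the multiplier on the support of the kernel is \eqref{close-1}: $|h(z)| \le 1/2$ whenever $|z| \ge r_{\mathfrak{p},\mathfrak{q}}$, so $|h_r^\infty|_\infty \le 1/2$.

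The substantive content is the finiteness of the double integrals, for which I would marshal two analytic ingredients. From Lemma \ref{lem-kappa} together with direct differentiation of the product defining $h$ one obtains the tail bounds $h(z) = \mathcal{O}(|z|^{-1})$ and $|\nabla h(z)| = \mathcal{O}(|z|^{-2})$ as $|z| \to \infty$. Complementing this, the hypothesis \eqref{sub-h} on $\Delta \phi$ supplies, in addition to the diagonal bound $\Pi(z,z) e^{-2\phi(z)} \le C$ already used in Lemma \ref{lem-o3}, the Christ off-diagonal Gaussian estimate
\[
|\Pi(z,w)|^2 e^{-2\phi(z) - 2\phi(w)} \le C \, e^{-c|z-w|^2}.
\]

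Substituting into \eqref{com-1} and changing variables to $u = z-w$, $v = z$, I would split the domain according to $|v| \ge 2|u|$ versus $|v| < 2|u|$. On the first region the mean value theorem gives $|h(v) - h(v-u)|^2 \le C |u|^2 |v|^{-4}$, and the integral factors as $\int |u|^2 e^{-c|u|^2}\, d\lambda(u) < \infty$ times $\int_{|v| \ge r} |v|^{-4}\, d\lambda(v) < \infty$. On the second region the crude bound $|h(v) - h(v-u)|^2 \le 1$ is enough because the $v$-slice for fixed $u$ has area $\mathcal{O}(|u|^2)$, which is absorbed by $|u|^k e^{-c|u|^2}$ under the $u$-integration. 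The identity \eqref{com-3} is handled identically on $\{|z|, |w| \ge r\}$; on the cross region $\{|z| \ge r,\ |w| < r\}$ one has $|z-w| \ge |z| - r$, so the Gaussian factor $e^{-c(|z|-r)^2}$ dominates both the bounded $w$-integration and the outer integration against $|h(z)|^2$.

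The key obstacle is the off-diagonal Gaussian decay of $\Pi$, a nontrivial input that is however standard for weighted Bergman kernels under \eqref{sub-h}; once it is granted, the rest is an elementary two-region estimate hinging on the improved cancellation $|h(z) - h(w)| = \mathcal{O}(|z-w|/|zw|)$ at infinity, which is the essential gain over the naive bound $|h(z)| + |h(w)|$ (a bound which, incidentally, would fail to yield convergence here).
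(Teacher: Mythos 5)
Your identities are the ones the paper uses implicitly (the kernel of $[M_f,K]$ is $(f(z)-f(w))K(z,w)$), and your finiteness argument is correct, but it takes a genuinely different — and more self-contained — route. The paper does not re-prove \eqref{com-1}: it simply cites \cite[Lemma 5.3]{QB3} for it, and then obtains \eqref{com-3} as a corollary by decomposing the double integral into the far region $\{|z|\ge r,\,|w|\ge r\}$ (which \emph{is} \eqref{com-1}, since $h_r^\infty = h$ there) plus two cross regions such as $\{|z|\le r,\,w\in\C\}$. On the cross regions the paper uses no off-diagonal information at all: since $h_r^\infty$ is bounded (by \eqref{close-1}) and $\int_\C |\Pi(z,w)|^2\,d\lambda_\phi(w)=\Pi(z,z)$ by the reproducing property, the cross term is dominated by $\|h_r^\infty\|_\infty^2\int_{|z|\le r}\Pi(z,z)\,d\lambda_\phi(z)<\infty$, using only the diagonal bound $\sup_z \Pi(z,z)e^{-2\phi(z)}<\infty$ already invoked in Lemma \ref{lem-o3}.

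What you do instead is prove both bounds from scratch, which is what the reference \cite{QB3} itself must do; the two genuine analytic inputs you identify — the off-diagonal Christ-type decay $|\Pi(z,w)|\,e^{-\phi(z)-\phi(w)}\lesssim e^{-c|z-w|}$ and the gain $|h(z)-h(w)|=\mathcal{O}(|z-w|/|zw|)$ from $|\nabla h|=\mathcal{O}(|\cdot|^{-2})$ — are exactly the right ones, and your two-region splitting around $|v|\gtrless 2|u|$ closes the estimate. Two small remarks. First, under \eqref{sub-h} the standard Christ estimate gives \emph{exponential}, not Gaussian, off-diagonal decay; your argument needs only integrability of $|u|^2 e^{-c|u|}$ over $\C$, so nothing breaks, but the stated $e^{-c|z-w|^2}$ is stronger than what the hypotheses supply. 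Second, your mean-value estimate on the region $|v|\ge 2|u|$ implicitly needs the gradient bound along the full segment $[v-u,v]$, and that segment can dip slightly below $|{\cdot}|=r$; this is harmless because the choice of $r_{\mathfrak{p},\mathfrak{q}}$ in \eqref{close-1} keeps the poles $q_i$ at a fixed positive distance from $\{|z|\ge r_{\mathfrak{p},\mathfrak{q}}\}$, so the estimate $|\nabla h|=\mathcal{O}(|\cdot|^{-2})$ holds on a slightly larger annulus, but it is worth saying. Net comparison: the paper's route is shorter here because it delegates the hard estimate to a citation and then uses the cheap reproducing-property trick for the cross terms; your route is heavier (it needs off-diagonal decay everywhere) but is self-contained and makes explicit where the cancellation $|h(z)-h(w)|\lesssim |z-w|/|zw|$ — without which the integral would diverge — is actually used.
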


\begin{proof}
The inequality \eqref{com-1} is proved in \cite[Lemma 5.3]{QB3}. Since
\begin{align}\label{3-int}
\begin{split}
 \|[h_r^\infty,    \Pi ]\|_{HS}^2 =& \int_{|z| \le r} \int_{w \in \C} |h_r^\infty(w) |^2 | \Pi(z,w)|^2   d\lambda_\phi(z) d\lambda_\phi(w)   +  \int_{z \in  \C} \int_{|w|\le r} | h_r^\infty(z) |^2 | \Pi(z,w)|^2   d\lambda_\phi(z) d\lambda_\phi(w)
 \\
 & +  \int_{|z| \ge r} \int_{|w|\ge r} | h(z) - h(w) |^2 | \Pi(z,w)|^2   d\lambda_\phi(z) d\lambda_\phi(w).
 \end{split}
\end{align}
The first and the second integrals in \eqref{3-int} are equal and are majorated by
\[
\| h_r^\infty  \|_\infty^2   \int_{|z| \le r} \int_{w \in \C}    |\Pi(z,w)|^2  d\lambda_\phi(z) d\lambda_\phi(w)  \le   \| h_r^\infty  \|_\infty^2   \int_{|z| \le r}   \Pi(z,z)  d\lambda_\phi(z) < \infty.
\]
The third integral in \eqref{3-int} is finite by \eqref{com-1}.
\end{proof}

\begin{lemma}\label{lem-com-q}
For any $r \ge r_{\mathfrak{p}, \mathfrak{q}}$, we have
\begin{align}\label{com-2}
\|[h,    \chi_r^\infty \Pi^{\mathfrak{q}} \chi_r^\infty ]\|_{HS}^2 = \iint_{\C^2}  | h(z) - h(w) |^2 | \Pi^{\mathfrak{q}}(z,w)|^2  \chi_r^\infty(z) \chi_r^\infty(w) d\lambda_\phi(z) d\lambda_\phi(w)  < \infty;
\end{align}
\begin{align}\label{com-4}
\|[h_r^\infty,    \Pi^{\mathfrak{q}} ]\|_{HS}^2 = \iint_{\C^2}  | h_r^\infty(z) - h_r^\infty(w) |^2 | \Pi^{\mathfrak{q}}(z,w)|^2   d\lambda_\phi(z) d\lambda_\phi(w) < \infty.
\end{align}
\end{lemma}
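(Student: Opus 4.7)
The plan is to reduce Lemma \ref{lem-com-q} to Lemma \ref{lem-com} by exploiting the fact that $\Pi^{\mathfrak{q}}$ is a finite rank perturbation of $\Pi$. Iterating the defining formula $\Pi^{q}(x,y) = \Pi(x,y) - \Pi(x,q)\Pi(q,y)/\Pi(q,q)$ gives a decomposition
\[
\Pi^{\mathfrak{q}}(z,w) = \Pi(z,w) - R^{\mathfrak{q}}(z,w),
\]
where $R^{\mathfrak{q}}$ is a kernel of rank at most $\ell$, of the form $R^{\mathfrak{q}}(z,w) = \sum_{i=1}^{m} c_i \varphi_i(z)\overline{\psi_i(w)}$ with $m \le \ell$ and $\varphi_i,\psi_i \in L^2(\C, d\lambda_\phi)$. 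The functions $\varphi_i, \psi_i$ can be taken to be built explicitly out of the reproducing kernels $\Pi(\cdot, q_j)$, which automatically belong to $L^2(\C, d\lambda_\phi)$.

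Since $|\Pi^{\mathfrak{q}}(z,w)|^2 \le 2|\Pi(z,w)|^2 + 2|R^{\mathfrak{q}}(z,w)|^2$, the two inequalities \eqref{com-2} and \eqref{com-4} will follow from Lemma \ref{lem-com} as soon as we verify that
\[
\iint_{\C^2} |h(z)-h(w)|^2 |R^{\mathfrak{q}}(z,w)|^2 \chi_r^\infty(z)\chi_r^\infty(w)\, d\lambda_\phi(z)d\lambda_\phi(w) < \infty
\]
and the analogous integral with $h_r^\infty$ in place of $h\chi_r^\infty$ is finite. The key point is that, thanks to the choice of $r_{\mathfrak{p}, \mathfrak{q}}$ in \eqref{close-1}, we have $|h(z)| \le 1/2$ on $\{|z| \ge r\}$, so both $|h(z) - h(w)|$ on $\{|z|, |w| \ge r\}$ and $|h_r^\infty(z) - h_r^\infty(w)|$ on $\C \times \C$ are uniformly bounded by an absolute constant. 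Bounding $|R^{\mathfrak{q}}(z,w)|^2$ by Cauchy-Schwarz into a finite sum of products $|\varphi_i(z)|^2|\psi_j(w)|^2$, each of the two integrals above reduces to a finite sum of products $\|\varphi_i\|_{L^2(d\lambda_\phi)}^2 \|\psi_j\|_{L^2(d\lambda_\phi)}^2$, which are manifestly finite.

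I do not anticipate any serious obstacle: the argument is a routine combination of Lemma \ref{lem-com} with the finite-rank structure of $\Pi - \Pi^{\mathfrak{q}}$ and the uniform bound on $h$ away from its poles provided by \eqref{close-1}. The only point requiring a small amount of bookkeeping is the explicit verification that the functions $\varphi_i, \psi_i$ arising from iterating the Palm recursion remain in $L^2(\C, d\lambda_\phi)$, which, however, is immediate from the fact that $\Pi(\cdot, q_j) \in \mathscr{F}_\phi \subset L^2(\C, d\lambda_\phi)$ for every $j$.
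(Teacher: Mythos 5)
Your proposal is correct and takes essentially the same approach as the paper, which simply observes that $\Pi^{\mathfrak{q}}$ is a finite-rank perturbation of $\Pi$ and invokes Lemma \ref{lem-com}; you spell out the routine details (the explicit decomposition $\Pi^{\mathfrak{q}} = \Pi - R^{\mathfrak{q}}$, the bound $|h| \le 1/2$ on $\{|z|\ge r\}$ from \eqref{close-1}, and the $L^2$-membership of the reproducing-kernel building blocks) that the paper leaves implicit.
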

\begin{proof}
Since  $\Pi^{\mathfrak{q}}$ is a finite rank perturbation of $\Pi$, the inequalities \eqref{com-2} and \eqref{com-4} follow from the inequality \eqref{com-1} and \eqref{com-3} respectively.
\end{proof}

\begin{proof}[Proof of Lemma \ref{lem-S3}]
It suffices to show that $ | h|^{1/2} \cdot \Pi^{\mathfrak{q}}  \cdot |h|^{1/2} \in \mathscr{C}_3(\mathscr{H})$.  We have
\begin{align*}
\Big|  | h|^{1/2} \cdot \Pi^{\mathfrak{q}}  \cdot |h|^{1/2}\Big|^3 =  | h|^{1/2} \cdot \Pi^{\mathfrak{q}}  \cdot |h| \cdot \Pi^{\mathfrak{q}}  \cdot | h| \cdot \Pi^{\mathfrak{q}}  \cdot |h|^{1/2}.
\end{align*}
It suffices to show that $ |h|^{1/2}  \Pi^{\mathfrak{q}}   |h| \Pi^{\mathfrak{q}} $ is Hilbert-Schmidt. To this end, write
\[
|h|^{1/2} \Pi^{\mathfrak{q}} |h|\Pi^{\mathfrak{q}}=  \chi_r^\infty |h|^{1/2} \Pi^{\mathfrak{q}} \chi_r^\infty    |h|\Pi^{\mathfrak{q}} +\chi_r^\infty |h|^{1/2} \Pi^{\mathfrak{q}} \chi_0^r |h|\Pi^{\mathfrak{q}} +\chi_0^r |h|^{1/2} \Pi^{\mathfrak{q}} \chi_r^\infty    |h|\Pi^{\mathfrak{q}} +\chi_0^r |h|^{1/2} \Pi^{\mathfrak{q}} \chi_0^r |h|\Pi^{\mathfrak{q}}.
\]
Observe that
\begin{align}\label{sum-HS}
 \chi_r^\infty |h|^{1/2}    \Pi^{\mathfrak{q}} \chi_r^\infty    |h|  \Pi^{\mathfrak{q}} -   \chi_r^\infty |h|^{1/2} h  \Pi^{\mathfrak{q}} \chi_r^\infty   \sgn(h) \Pi^{\mathfrak{q}} =  \chi_r^\infty |h|^{1/2} [ \chi_r^\infty  \Pi^{\mathfrak{q}} \chi_r^\infty,  \,\,   h]\sgn(h)  \Pi^{\mathfrak{q}}.
\end{align}
Since $\chi_r^\infty | h|^{1/2}$ is bounded, we may apply Lemma \ref{lem-com-q} to conclude that
\[
 \chi_r^\infty |h|^{1/2} [ \chi_r^\infty  \Pi^{\mathfrak{q}} \chi_r^\infty,   \,  h]\sgn(h)  \Pi^{\mathfrak{q}} \in \mathscr{C}_2(\mathscr{H}).
 \]
 Note also that $h(z) = \mathcal{O}(1/|z|)$ as $|z|\to\infty$,  hence  by Lemma \ref{lem-o3}, we have
 \[
 \|\chi_r^\infty |h|^{1/2} h \Pi^{\mathfrak{q}}\|_{HS}^2 = \int_{|z|\ge r} |h(z)|^3 \Pi^{\mathfrak{q}}(z,z) d\lambda_\phi(z) \lesssim   \int_{|z|\ge r} \frac{1}{|z|^3} \Pi^{\mathfrak{q}}(z,z) d\lambda_\phi(z)  < \infty.
 \]
 It follows that $\chi_r^\infty |h|^{1/2} h  \Pi^{\mathfrak{q}} \chi_r^\infty   \sgn(h) \Pi^{\mathfrak{q}} $ is Hilbert-Schmidt. Consequently, by  \eqref{sum-HS},  the operator $ \chi_r^\infty |h|^{1/2}    \Pi^{\mathfrak{q}} \chi_r^\infty    |h|  \Pi^{\mathfrak{q}}$, a sum of two Hilbert-Schmidt operators,  is itself  Hilbert-Schmidt.

Now we show that $\Pi^{\mathfrak{q}} \chi_0^r |h|\Pi^{\mathfrak{q}}$ is Hilbert-Schmidt. Indeed,  since  there exists a constant $C> 0$ such that
\begin{align}\label{near-q}
|\Pi^{\mathfrak{q}}(z,w)|\le C \cdot \prod_{i=1}^\ell | (z-q_i)(w-q_i)|  \text{ for any $|z|< r$ and $|w|\le r$},
\end{align}
we have
\[
\|\Pi^{\mathfrak{q}} \chi_0^r |h|\Pi^{\mathfrak{q}}\|_{HS}^2 = \| \chi_0^r  |h|^{1/2}\Pi^{\mathfrak{q}} \chi_0^r  |h|^{1/2} \|_{HS}^2 = \int_{|z|\le r}\int_{|w|\le r} |h(z)h(w)| \cdot | \Pi^{\mathfrak{q}}(z,w)|^2 d\lambda_\phi(z) d\lambda_\phi(w)<\infty.
\]
Consequently, $\chi_r^\infty |h|^{1/2} \Pi^{\mathfrak{q}} \chi_0^r |h|\Pi^{\mathfrak{q}} $ is Hilbert-Schmidt.

We show also that $\chi_0^r |h|^{1/2} \Pi^{\mathfrak{q}}$ is Hilbert-Schmidt. Indeed,
\[
\|\chi_0^r |h|^{1/2} \Pi^{\mathfrak{q}}\|_{HS}^2 = \int_{|z|\le r} \int_{w\in \C}|h(z)| \cdot |\Pi^{\mathfrak{q}}(z,w)|^2d\lambda_\phi(z) d\lambda_\phi(w) = \int_{|z|\le r} |h(z)| \Pi^{\mathfrak{q}}(z,z)d\lambda_\phi(z) <\infty,
\]
where we used again \eqref{near-q} for $z = w$ and $|z| \le r$.  Now since  $\chi_r^\infty |h|\Pi^{\mathfrak{q}}$ and $\chi_0^r |h|\Pi^{\mathfrak{q}}$ are both bounded operator, we conclude that $\chi_0^r |h|^{1/2} \Pi^{\mathfrak{q}} \chi_r^\infty    |h|\Pi^{\mathfrak{q}}$ and $\chi_0^r |h|^{1/2} \Pi^{\mathfrak{q}} \chi_0^r |h|\Pi^{\mathfrak{q}}$ are both Hilbert-Schmidt.

Lemma \ref{lem-S3} is proved completely.
\end{proof}

\begin{proof}[Proof of Proposition \ref{prop-order-det}]
By  \eqref{cont-det}, it suffices to prove the corresponding convergences of operators in $\mathscr{C}_3(\mathscr{H})$.   By Lemma \ref{lem-S3} and  Proposition \ref{prop-cp}, we have
\begin{align}\label{C3-cv}
T_{n, R}\xrightarrow[R\to \infty]{\text{in $\mathscr{C}_3(\mathscr{K})$}} T_n;  \quad T_{R}\xrightarrow[R\to \infty]{\text{in $\mathscr{C}_3(\mathscr{K})$}} T.
\end{align}
By Lemma \ref{lem-S3}, we also have $ \sqrt{|h|} \Pi^{\mathfrak{q}} \in \mathscr{C}_6(\mathscr{H})$. Applying Proposition \ref{prop-cp} again and  noting that $\sqrt{|h|} \Pi_n^{\mathfrak{q}} = \sqrt{|h|} \Pi^{\mathfrak{q}} \cdot \Pi_n^{\mathfrak{q}}$, we obtain
\[
\sqrt{|h|} \Pi_n^{\mathfrak{q}} \xrightarrow[n \to \infty]{\text{in $\mathscr{C}_6(\mathscr{K})$}} \sqrt{|h|} \Pi^{\mathfrak{q}}.
\]
The above convergence, combined with the H\"older inequalities  \eqref{S-holder} for operators in von~Neumann-Schatten classes  immediately yields the desired convergences in \eqref{C3-cv}. Proposition \ref{prop-order-det} is proved completely.
\end{proof}

\subsubsection{Control of the regularization factor}\label{sub-sec-4E}

Recall the notation introduced in \eqref{notation-trunc} and \eqref{notation-T}.

 \bigskip

\noindent \textbf {1. Control of $E_1(n, R; r)$.}

\begin{proof}[Proof of Lemma \ref{lem-E1}]
Note that
\[
E_1(n, R; r) =    \tr(\chi_0^r T_{n, R} \chi_0^r ) - \frac{1}{2} \tr(\chi_0^r T_{n, R}^2 \chi_0^r)     - \frac{1}{2}  \tr(h_r^R   \Pi_n^{\mathfrak{q}}    h_0^r \Pi_n^{\mathfrak{q}}  ).
\]
For proving Lemma \ref{lem-E1},  it suffices to prove that for any $r\ge r_{\mathfrak{p}, \mathfrak{q}}$, we have the following convergences in $\mathscr{C}_1(\mathscr{H})$:
\begin{align}
\lim_{R\to\infty} \lim_{n\to\infty} \chi_0^r T_{n, R} \chi_0^r &=  \lim_{n\to\infty} \lim_{R\to\infty} \chi_0^r T_{n, R} \chi_0^r= \chi_0^r T \chi_0^r;\label{1-tr-cv}
\\
\lim_{R\to\infty} \lim_{n\to\infty} \chi_0^r T^2_{n, R} \chi_0^r &=  \lim_{n\to\infty} \lim_{R\to\infty} \chi_0^r T^2_{n, R} \chi_0^r= \chi_0^r T^2 \chi_0^r; \label{2-tr-cv}
\\
\lim_{R\to\infty} \lim_{n\to\infty}  h_r^R   \Pi_n^{\mathfrak{q}}    h_0^r \Pi_n^{\mathfrak{q}}   &=   \lim_{n\to\infty} \lim_{R\to\infty}  h_r^R   \Pi_n^{\mathfrak{q}}    h_0^r \Pi_n^{\mathfrak{q}} =  h_r^\infty   \Pi^{\mathfrak{q}}    h_0^r \Pi^{\mathfrak{q}}.\label{3-tr-cv}
\end{align}
Let us check the convergences in \eqref{2-tr-cv}. We may write
\[
 \chi_0^r T^2_{n, R} \chi_0^r  =  (\chi_0^r   \sgn(h) \sqrt{| h|}    \Pi^{\mathfrak{q}}  \cdot  \Pi_n^{\mathfrak{q}}  ) \cdot (  \Pi^{\mathfrak{q}}     \sqrt{|h|}       \chi_0^R   \sgn(h) \sqrt{| h|}   \Pi^{\mathfrak{q}}  )\cdot(\Pi_n^{\mathfrak{q}}  \cdot  \Pi^{\mathfrak{q}}  \sqrt{|h|}          \chi_0^r ).
\]
Since $ \chi_0^r \sgn(h) \sqrt{|h|} \Pi^{\mathfrak{q}}$ and  $ \Pi^{\mathfrak{q}} \sqrt{|h|}\chi_0^r$ are Hilbert-Schmidt, we may apply Proposition \ref{prop-cp} to conclude that
\[
 \chi_0^r \sgn(h) \sqrt{|h|} \Pi^{\mathfrak{q}} \cdot \Pi^{\mathfrak{q}}_n  \xrightarrow[n\to\infty]{\text{in $\mathscr{C}_2(\mathscr{H})$}}  \chi_0^r \sgn(h) \sqrt{|h|} \Pi^{\mathfrak{q}}; \quad
\Pi^{\mathfrak{q}}_n  \cdot \Pi^{\mathfrak{q}} \sqrt{|h|}\chi_0^r \xrightarrow[n\to\infty]{\text{in $\mathscr{C}_2(\mathscr{H})$}}  \Pi^{\mathfrak{q}} \sqrt{|h|}\chi_0^r.
\]
It follows, by using also  the fact that $\Pi^{\mathfrak{q}}     \sqrt{|h|}       \chi_0^R   \sgn(h) \sqrt{| h|}   \Pi^{\mathfrak{q}}$ is bounded,  that
\[
 \chi_0^r T^2_{n, R} \chi_0^r \xrightarrow[n\to\infty]{ \text{in $\mathscr{C}_1(\mathscr{H})$}} (\chi_0^r   \sgn(h) \sqrt{| h|}    \Pi^{\mathfrak{q}}   ) \cdot (  \Pi^{\mathfrak{q}}     \sqrt{|h|}       \chi_0^R   \sgn(h) \sqrt{| h|}   \Pi^{\mathfrak{q}}  )\cdot( \Pi^{\mathfrak{q}}  \sqrt{|h|}          \chi_0^r ) = \chi_0^r T^2_{R} \chi_0^r  .
 \]
 Now by writing
 \[
 \chi_0^r T^2_{R} \chi_0^r = \Big[(\chi_0^r   \sgn(h) \sqrt{| h|}    \Pi^{\mathfrak{q}}  \cdot \Pi^{\mathfrak{q}}     \sqrt{|h|} )      \chi_0^R\Big]   \cdot \Big[\chi_0^R  ( \sgn(h) \sqrt{| h|}   \Pi^{\mathfrak{q}}  \cdot \Pi^{\mathfrak{q}}  \sqrt{|h|}          \chi_0^r ) \Big],
 \]
also by using the fact that
\[
\chi_0^r   \sgn(h) \sqrt{| h|}    \Pi^{\mathfrak{q}}  \cdot \Pi^{\mathfrak{q}}     \sqrt{|h|} \an  \sgn(h) \sqrt{| h|}   \Pi^{\mathfrak{q}}  \cdot \Pi^{\mathfrak{q}}  \sqrt{|h|}          \chi_0^r
\]
 are both Hilbert-Schmidt, we may apply Proposition \ref{prop-cp}  to conclude that
\[
 \chi_0^r T^2_{R} \chi_0^r \xrightarrow[R\to\infty]{\text{in $\mathscr{C}_1(\mathscr{H})$}}  (\chi_0^r   \sgn(h) \sqrt{| h|}    \Pi^{\mathfrak{q}}  \cdot \Pi^{\mathfrak{q}}     \sqrt{|h|} )    \cdot ( \sgn(h) \sqrt{| h|}   \Pi^{\mathfrak{q}}  \cdot \Pi^{\mathfrak{q}}  \sqrt{|h|}          \chi_0^r ) =  \chi_0^r T^2 \chi_0^r.
\]
Now  we obtain the following convergence in $\mathscr{C}_1(\mathscr{H})$:
\[
\lim_{R\to\infty} \lim_{n\to\infty} \chi_0^r T^2_{n, R} \chi_0^r= \chi_0^r T^2 \chi_0^r.
\]
In a similar way, we obtain also the following convergence in $\mathscr{C}_1(\mathscr{H})$:
\[
\lim_{n\to\infty} \lim_{R\to\infty}  \chi_0^r T^2_{n, R} \chi_0^r= \chi_0^r T^2 \chi_0^r.
\]
The convergences in \eqref{2-tr-cv} is proved  completely.

This argument also yields the  convergences in \eqref{1-tr-cv} and \eqref{3-tr-cv}.
\end{proof}

\bigskip

\noindent \textbf {2. Control of $E_2(n, R; r)$.}

\begin{lemma}\label{lem-h-k}
For any $r\ge r_{\mathfrak{p}, \mathfrak{q}}$, the following integrals are finite:
\[
\int_{| z |\ge r}   \Big| h (z) - \frac{h(z)^2}{2}  + ( \kappa(\mathfrak{p}, z) - \kappa(\mathfrak{q},z))  \Big| \Pi(z,z) d\lambda_\phi(z) < \infty;
\]
\[
\int_{| z |\ge r}   \Big| h (z) - \frac{h(z)^2}{2} + ( \kappa(\mathfrak{p}, z) - \kappa(\mathfrak{q}, z))  \Big| \Pi^{\mathfrak{q}}(z,z) d\lambda_\phi(z) < \infty.
\]
\end{lemma}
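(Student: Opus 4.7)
The plan is to combine Lemma \ref{lem-kappa} with a short Taylor expansion of $h = e^u - 1$, where $u(z) := \log\prod_{i=1}^\ell |(z-p_i)/(z-q_i)|^2$, and then invoke Lemma \ref{lem-o3}. First I would observe that by \eqref{close-1}, on the region $\{|z| \ge r_{\mathfrak{p}, \mathfrak{q}}\}$ the quantity $u(z)$ is bounded in absolute value by $\log 2$, so the Taylor series $h = u + u^2/2 + u^3/6 + \cdots$ converges absolutely and may be manipulated freely.

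The algebraic key is the elementary identity
\[
h - \frac{h^2}{2} = (e^u - 1) - \frac{(e^u - 1)^2}{2} = u + \mathcal{O}(u^3) \quad \text{as } u \to 0,
\]
the two $u^2/2$ contributions cancelling exactly. Substituting the asymptotics $u(z) = -(\kappa(\mathfrak{p}, z) - \kappa(\mathfrak{q}, z)) + \mathcal{O}(1/|z|^3)$ furnished by Lemma \ref{lem-kappa}, and noting that $u = \mathcal{O}(1/|z|)$ so that $u^3 = \mathcal{O}(1/|z|^3)$, I would obtain the pointwise bound
\[
\Big| h(z) - \frac{h(z)^2}{2} + (\kappa(\mathfrak{p}, z) - \kappa(\mathfrak{q}, z)) \Big| = \mathcal{O}(1/|z|^3) \quad \text{as } |z|\to\infty.
\]

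With this bound in hand the conclusion is immediate. On any compact annulus $\{r \le |z| \le R_0\}$ the integrand is bounded and continuous (since $p_i, q_i$ all lie in $\{|z| < r_{\mathfrak{p},\mathfrak{q}}\} \subset \{|z| < r\}$, so neither $h$ nor $\kappa(\cdot, z)$ has any singularity on the annulus), so the integral against the finite measure $\Pi(z,z)\, d\lambda_\phi(z)$ restricted to the annulus is clearly finite. On the exterior $\{|z| \ge R_0\}$, the $\mathcal{O}(1/|z|^3)$ decay combined with the first estimate in Lemma \ref{lem-o3} gives integrability. The second integral, involving $\Pi^{\mathfrak{q}}$, is treated identically using the second estimate in Lemma \ref{lem-o3}. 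I do not anticipate any real obstacle here; the only point worth explicit verification is the exact cancellation in the expansion of $h - h^2/2$, and that is a direct computation.
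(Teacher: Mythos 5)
Your proof is correct, and the key algebraic step is genuinely cleaner than the paper's. Both arguments ultimately establish the pointwise asymptotic $h(z) - h(z)^2/2 + (\kappa(\mathfrak{p},z) - \kappa(\mathfrak{q},z)) = \mathcal{O}(1/|z|^3)$ and then conclude via Lemma \ref{lem-o3} together with boundedness on the compact annulus, so the overall structure matches. The difference is in how the asymptotic is obtained. The paper introduces $\alpha_i(z) = (q_i - p_i)/(z - q_i)$ and $\beta_i = \alpha_i + \overline{\alpha_i}$, expands $h$ and $h^2$ separately as multinomials, cancels cross terms by hand, and then invokes the auxiliary identity \eqref{cocyle-sim} extracted from the proof of Lemma \ref{lem-kappa}. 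You instead set $h = e^u - 1$ with $u = \log\prod_i |(z-p_i)/(z-q_i)|^2$, observe the scalar identity $(e^u - 1) - (e^u-1)^2/2 = u + \mathcal{O}(u^3)$ (the two $u^2/2$ terms cancel, a one-line Taylor check), and then substitute the conclusion of Lemma \ref{lem-kappa} for $u$ directly, using $u = \mathcal{O}(1/|z|)$ to absorb $u^3$ into $\mathcal{O}(1/|z|^3)$. This re-uses Lemma \ref{lem-kappa} as a black box rather than re-deriving it, which is both shorter and conceptually clearer; one only needs to note that $|u| \le \log 2$ on $\{|z| \ge r_{\mathfrak{p},\mathfrak{q}}\}$ by \eqref{close-1} so that the Taylor remainder constant is uniform, which you correctly flagged. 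No gaps.
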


\begin{proof}
The second inequality follows immediately from the first one.  By Lemma \ref{lem-o3}, it suffices to prove that
\[
h (z) - \frac{h(z)^2}{2}  =    (\kappa(\mathfrak{q}, z) - \kappa(\mathfrak{p}, z) ) + \mathcal{O} (1/|z|^3) \text{ as } |z| \to\infty.
\]
To this end, we first write $\alpha_i (z) = \frac{ q_i- p_i}{z-q_i}$ and $\beta_i(z) = \alpha_i(z) + \overline{\alpha_i(z)}$.  Then we have
\begin{align*}
h (z)   & = \prod_{i = 1}^\ell | 1+ \alpha_i (z) |^2 -1 =  \prod_{ i= 1}^\ell (1 + \beta_i(z) + | \alpha_i(z)|^2) -1
\\
& =  \sum_{ i = 1}^\ell  ( \beta_i (z) + |\alpha_i(z)|^2) +  \sum_{1\le i < j \le \ell} \beta_i(z) \beta_j(z)   + \mathcal{O} (1/|z|^3) \text{ as } |z| \to\infty.
\end{align*}
It follows that
\begin{align*}
h(z)^2&=  \sum_{i = 1}^\ell \beta_i(z)^2  + 2 \sum_{1\le i< j \le \ell}  \beta_i(z) \beta_j(z) +  \mathcal{O} (1/|z|^3)
\\
&=   \sum_{i=1}^\ell  (\alpha_i(z)^2 + \overline{\alpha_i(z)}^2 + 2 | \alpha_i(z)|^2)   +  2 \sum_{1\le i< j \le \ell}  \beta_i(z) \beta_j(z) +    \mathcal{O} (1/|z|^3) \text{ as } |z| \to\infty.
\end{align*}
Consequently, we have
\begin{align*}
h(z) - \frac{h(z)^2}{2} = \sum_{i=1}^\ell   \Big( \beta_i(z) - \frac{1}{2} \alpha_i(z)^2 -  \frac{1}{2} \overline{\alpha_i(z)}^2\Big) +    \mathcal{O} (1/|z|^3) \text{ as } |z| \to\infty.
\end{align*}
Equality \eqref{cocyle-sim}  implies
\begin{align*}
 \beta_i(z) - \frac{1}{2} \alpha_i(z)^2 -  \frac{1}{2} \overline{\alpha_i(z)}^2   = \kappa(q_i, z)  - \kappa(p_i, z) +  \mathcal{O}(1/|z|^3) \text{ as } |z| \to\infty.
\end{align*}
Combining the two equations, we complete the proof of  Lemma \ref{lem-h-k}.
\end{proof}

\begin{proof}[Proof of Lemma \ref{lem-E2}]
Recall that
\[
E_2(n, R; r) = \int_{ r \le | z | \le R}    \Big( h(z) - \frac{h(z)^2}{2}   + (\kappa(\mathfrak{p}, z) - \kappa(\mathfrak{q}, z)) \Big) \Pi_n(z,z)   d\lambda_\phi(z)
\]
Let $n\ge \ell$.
On the one hand,  since the function $ \Big( h(z) - \frac{h(z)^2}{2}   + (\kappa(\mathfrak{p}, z) - \kappa(\mathfrak{q}, z)) \Big) \Pi_n(z,z)$ is integrable on $\{z\in\C: |z|\ge r\}$,  we have
\[
  \lim_{R\to\infty} E_2 (n, R; r)  = \int_{| z | \ge r}    \Big( h(z) - \frac{h(z)^2}{2}   + (\kappa(\mathfrak{p}, z) - \kappa(\mathfrak{q}, z)) \Big) \Pi_n(z,z)   d\lambda_\phi(z).
\]
Taking into account Lemma \ref{lem-h-k},  using the clear inequality $\Pi_n(z,z) \le \Pi(z,z)$ and the Dominated Convergence Theorem, we obtain
\[
\lim_{n\to\infty} \lim_{R\to\infty} E_2 (n, R; r)  = \int_{| z | \ge r}    \Big( h(z) - \frac{h(z)^2}{2}   +(\kappa(\mathfrak{p}, z) - \kappa(\mathfrak{q}, z)) \Big) \Pi(z,z)   d\lambda_\phi(z).
\]
On the other hand,  by the Dominated Convergence Theorem, we also have
\[
\lim_{n\to\infty} E_2 (n, R; r)  = \int_{r \le | z | \le R}    \Big( h(z) - \frac{h(z)^2}{2}   +(\kappa(\mathfrak{p}, z) - \kappa(\mathfrak{q}, z)) \Big) \Pi(z,z)   d\lambda_\phi(z).
\]
Hence
\[
\lim_{R\to\infty}\lim_{n\to\infty} E_2 (n, R; r)   = \int_{| z | \ge r}    \Big( h(z) - \frac{h(z)^2}{2}   +(\kappa(\mathfrak{p}, z) - \kappa(\mathfrak{q}, z)) \Big) \Pi(z,z)   d\lambda_\phi(z).
\]
Equality \eqref{2-int} is proved completely.
\end{proof}

\bigskip

\noindent \textbf {3. Control of $E_3(n, R; r)$. }
\begin{proof}[Proof of Lemma \ref{lem-E3}]
For any fixed $n\ge\ell$ and any pair of positive numbers $r, R$ satisfying $R>r \ge r_{\mathfrak{p}, \mathfrak{q}}$,  we have
\begin{align*}
&|E_3(n, R; r) |= \int_{ r \le | z | \le R}    \Big|  h(z) - \frac{h(z)^2}{2} \Big| (\Pi_n(z,z)    - \Pi_n^{\mathfrak{q}}(z,z)) d\lambda_\phi(z)
\\
& \le \sup_{|z|\ge r} \Big|  h(z) - \frac{h(z)^2}{2} \Big|  \cdot  \int_{ \C}   (\Pi_n(z,z)    - \Pi_n^{\mathfrak{q}}(z,z)) d\lambda_\phi(z) = \ell \cdot  \sup_{|z|\ge r} \Big|  h(z) - \frac{h(z)^2}{2} \Big|.
\end{align*}
It follows that
\begin{align*}
\limsup_{r\to\infty}\sup_{n\ge\ell, \, R> r}  | E_3(n, R; r)|  \le \lim_{r\to\infty}  \ell \cdot  \sup_{|z|\ge r} \Big|  h(z) - \frac{h(z)^2}{2} \Big| =0.
\end{align*}
Lemma \ref{lem-E3} is proved completely.
\end{proof}

\bigskip

\noindent \textbf {4. Control of $E_4(n, R; r)$. }

 \begin{lemma}\label{lem-h-h}
There exits a constant $C>0$, such that
 \begin{align}\label{d-h-h}
| h(z) - h(w)|  \le C \Big|  \frac{1}{z}- \frac{1}{w}\Big|, \text{ if $|z| \ge r_{\mathfrak{p}, \mathfrak{q}}$ and $| w| \ge r_{\mathfrak{p}, \mathfrak{q}}$}.
 \end{align}
 \end{lemma}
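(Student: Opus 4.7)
The plan is to reduce the claim to the Lipschitz continuity of a smooth function on a compact disk, via the change of variables $u = 1/z$, $v = 1/w$. Writing $\frac{z-p_i}{z-q_i} = \frac{1 - p_i u}{1-q_i u}$ with $u = 1/z$, I would define, for $u$ in a neighborhood of the origin,
\[
F(u) := \prod_{i=1}^\ell \frac{(1-p_i u)(1-\bar p_i \bar u)}{(1-q_i u)(1-\bar q_i \bar u)} - 1,
\]
so that $h(z) = F(1/z)$ whenever $z \neq 0$. Note that $F(0) = 0$, consistent with the fact that $h(z) \to 0$ as $|z| \to \infty$.

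Next I would observe that the condition \eqref{close-1} implicitly forces $|q_i| < r_{\mathfrak{p}, \mathfrak{q}}$ for every $i$: otherwise the supremum of $|(z-p_i)/(z-q_i)|^2$ over $|z| \ge r_{\mathfrak{p}, \mathfrak{q}}$ would be infinite as $z$ approaches the pole $q_i$, contradicting the bound $\le 3/2$. Consequently each pole $u = 1/q_i$ of $F$ satisfies $|1/q_i| > 1/r_{\mathfrak{p}, \mathfrak{q}}$, so $F$ is real-analytic in $(u, \bar u)$ on an open neighborhood of the compact disk $\overline{D} := \{u \in \mathbb{C} : |u| \le 1/r_{\mathfrak{p}, \mathfrak{q}}\}$.

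Finally, by smoothness and compactness, the quantity
\[
M := \sup_{u \in \overline{D}} \left( \Bigl| \tfrac{\partial F}{\partial u}(u) \Bigr| + \Bigl| \tfrac{\partial F}{\partial \bar u}(u) \Bigr| \right)
\]
is finite, and the mean value inequality yields $|F(u) - F(v)| \le M |u - v|$ for all $u, v \in \overline{D}$. Substituting $u = 1/z$ and $v = 1/w$, which lie in $\overline{D}$ whenever $|z|, |w| \ge r_{\mathfrak{p}, \mathfrak{q}}$, gives the desired estimate with $C = M$.

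There is no real obstacle here; the whole argument is a routine smoothness-plus-compactness exercise once the reparametrization $z \mapsto 1/z$ is made. The only point deserving care is the verification that the poles $1/q_i$ of $F$ lie outside $\overline{D}$, which rests on the strict inequality $|q_i| < r_{\mathfrak{p}, \mathfrak{q}}$ extracted from \eqref{close-1}.
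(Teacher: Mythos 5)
Your proposal is correct, and it takes a genuinely different route from the paper. The paper proves the estimate by first writing $h(z)+1 = \bigl|\,1 + \sum_{k=1}^\ell \gamma_k/(z-q_k)\,\bigr|^2$ via partial fractions, then applying the elementary bound $\bigl||A|^2-|B|^2\bigr| \le (|A|+|B|)\,|A-B|$ together with the observation that
\[
\sup_{|z|,|w|\ge r_{\mathfrak p,\mathfrak q}}\frac{\bigl|\tfrac1{z-q_k}-\tfrac1{w-q_k}\bigr|}{\bigl|\tfrac1z-\tfrac1w\bigr|}<\infty.
\]
You instead precompose with the inversion $z\mapsto u=1/z$, recognize $h(z)=F(1/z)$ for a rational function $F$ of $(u,\bar u)$, and then invoke smoothness and compactness on the convex disk $\overline D=\{|u|\le 1/r_{\mathfrak p,\mathfrak q}\}$ together with the mean value inequality. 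Both proofs rest on the same geometric fact, which you correctly extract from \eqref{close-1}: each $q_i$ with $p_i\ne q_i$ satisfies $|q_i|<r_{\mathfrak p,\mathfrak q}$, so the poles $1/q_i$ of $F$ lie outside $\overline D$. (The case $p_i=q_i$, which the paper's ``distinct points'' convention does not obviously exclude, is harmless: the corresponding factor is identically $1$ and $F$ has no pole at $1/q_i$.) Your approach is somewhat slicker, trading the paper's explicit partial-fraction and ratio estimates for a routine appeal to compactness, while the paper's argument is more hands-on and self-contained. Both are valid; I see no gap.
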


 \begin{proof}
Clearly, there exist $\gamma_1, \cdots, \gamma_\ell  \in\C$, such that
\begin{align*}
g(z) = \left|   1  + \sum_{k=1}^\ell \frac{\gamma_k}{z-q_k} \right|^2; \quad h(z) = \left|   1  + \sum_{k=1}^\ell \frac{\gamma_k}{z-q_k} \right|^2 -1.
\end{align*}
Consequently, if $|z|\ge r_{\mathfrak{p}, \mathfrak{q}}$ and $|w|\ge r_{\mathfrak{p}, \mathfrak{q}}$, then
\begin{align*}
  | h (z)-  h (w)|    \le \sup_{|z|\ge r_{\mathfrak{p}, \mathfrak{q}}, |w| \ge r_{\mathfrak{p}, \mathfrak{q}}}   \left(\Big|   1  + \sum_{k=1}^\ell \frac{\gamma_k}{z-q_k} \Big| + \Big|   1  + \sum_{k=1}^\ell \frac{\gamma_k}{w-q_k} \Big|\right)  \cdot \left|   \sum_{k=1}^\ell \frac{\gamma_k}{z-q_k}  -   \frac{\gamma_k}{w-q_k}   \right|.
\end{align*}
The  simple inequality
\begin{align*}
 \displaystyle \sup_{|z|\ge r_{\mathfrak{p}, \mathfrak{q}}, |w|\ge r_{\mathfrak{p}, \mathfrak{q}} }  \displaystyle \frac{ \left|   \frac{1}{z-q_k}  -   \frac{1}{w-q_k}   \right|}{ \left|  \frac{1}{z}  -   \frac{1}{w}   \right|} < \infty,
\end{align*}
implies now the existence of $C>0$ such that  \eqref{d-h-h} holds.
 \end{proof}

\begin{lemma}\label{lem-real}
   For any $r \ge r_{\mathfrak{p}, \mathfrak{q}}$, we have
\begin{align}\label{simple-com}
\int_{|z|\ge r } \int_{|w|\ge r }\Big|\frac{1}{z} - \frac{1}{w}\Big|^2 \cdot | \Pi(z,w)|^2 d\lambda_\phi(z)\lambda_\phi(w) <\infty.
\end{align}
Moreover, the following limit holds:
\begin{align}\label{off-diag}
\lim_{r\to\infty} \int_{|z|\le r}\int_{|w|\ge r } \frac{1}{|w|^2}  \cdot | \Pi(z,w)|^2 d\lambda_\phi(z)\lambda_\phi(w)=0.
\end{align}
\end{lemma}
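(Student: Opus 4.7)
The plan is to exploit the radial symmetry of $\phi$ to reduce both integrals to explicit series in the moments of the weight $e^{-2\phi}$, and then to control these series via the two-sided pinching $m\le\Delta\phi\le M$. First I introduce
\[
\mu_k=\int_{\C}|z|^{2k}\,d\lambda_\phi(z)=\tfrac{1}{a_k^2},\qquad \tilde\mu_k(r)=\int_{|z|\ge r}|z|^{2k}\,d\lambda_\phi(z),\quad k\ge 0,
\]
together with the auxiliary $\tilde\mu_{-1}(r)=\int_{|z|\ge r}|z|^{-2}\,d\lambda_\phi(z)$, which is finite because $\Delta\phi\ge m$ forces $e^{-2\phi(s)}$ to decay at least like $e^{-ms^2/2}$. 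Expanding $\Pi(z,w)=\sum_k a_k^2(z\bar w)^k$ and passing to polar coordinates, angular orthogonality kills all $k\ne l$ cross terms and yields the explicit identities
\begin{align*}
\iint_{|z|,|w|\ge r}\Big|\tfrac{1}{z}-\tfrac{1}{w}\Big|^2|\Pi(z,w)|^2\,d\lambda_\phi\,d\lambda_\phi &= 2\sum_{k\ge 0}\frac{\tilde\mu_k(r)\tilde\mu_{k-1}(r)}{\mu_k^2}-2\sum_{k\ge 1}\frac{\tilde\mu_{k-1}(r)^2}{\mu_k\mu_{k-1}},\\
\iint_{|z|\le r,|w|\ge r}\tfrac{1}{|w|^2}|\Pi(z,w)|^2\,d\lambda_\phi\,d\lambda_\phi &= \sum_{k\ge 0}\frac{(\mu_k-\tilde\mu_k(r))\tilde\mu_{k-1}(r)}{\mu_k^2}.
\end{align*}

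Setting $p_k(r)=\tilde\mu_k(r)/\mu_k\in[0,1]$, which is non-decreasing in $k$ (the weight $s^{2k+1}e^{-2\phi(s)}$ is pushed further out as $k$ grows), the first series rearranges into the telescoping non-negative sum
\[
\mathcal I(r)=\frac{2\tilde\mu_0(r)\tilde\mu_{-1}(r)}{\mu_0^2}+2\sum_{k\ge 1}\frac{\mu_{k-1}}{\mu_k}\,p_{k-1}(r)\bigl(p_k(r)-p_{k-1}(r)\bigr).
\]
Finiteness \eqref{simple-com} will follow from $p_{k-1}\le 1$, the trivial telescoping bound $\sum_{k\ge 1}(p_k-p_{k-1})\le 1$, and the uniform bound $\sup_k\mu_{k-1}/\mu_k<\infty$; this last bound is a consequence of the radial form $m\le\phi''(s)+\phi'(s)/s\le M$ of the pinching, which integrates to $\phi'(s)\in[ms/2,Ms/2]$ and hence $\phi(s)\asymp s^2$, forcing $\mu_k/\mu_{k-1}\gtrsim k\to\infty$.

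For \eqref{off-diag}, the second series rewrites as $(1-p_0(r))\tilde\mu_{-1}(r)/\mu_0+\sum_{k\ge 1}(1-p_k(r))\,p_{k-1}(r)\,\mu_{k-1}/\mu_k$. The $k=0$ term tends to $0$ since $\tilde\mu_{-1}(r)\to 0$ by integrability. For the main sum, the factor $(1-p_k)p_{k-1}$ is bounded by $1/4$ and, by monotonicity of $p_k$ in $k$, is non-negligible only in a transition window where $p_{k-1}$ moves from near $0$ to near $1$. Gaussian-type Laplace-method estimates for $\mu_k$ and $\tilde\mu_k(r)$, based on $\phi(s)\asymp s^2$, will place this window around $k\asymp r^2$ with width $\asymp r$; in that range $\mu_{k-1}/\mu_k\lesssim 1/k\lesssim r^{-2}$, so the whole contribution is $\lesssim r\cdot r^{-2}=r^{-1}\to 0$. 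The main technical obstacle is precisely to make these concentration estimates quantitative and uniform in $k$ under the sole pinching $m\le\Delta\phi\le M$, rather than relying on the explicit incomplete-gamma/Poisson formulas available in the standard Ginibre case $\phi(z)=|z|^2$.
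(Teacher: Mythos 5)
Your reduction to the moment series is correct, and the telescoping rewriting via $p_k(r)=\tilde\mu_k(r)/\mu_k$ is a nice move; the monotonicity $p_{k-1}\le p_k$ is indeed justified by likelihood-ratio ordering of the radial densities $\propto s^{2k+1}e^{-2\phi(s)}$. However, your argument has two problems, one small and fixable, one genuine.

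First, a fixable glitch in the proof of \eqref{simple-com}. You justify $\sup_k\mu_{k-1}/\mu_k<\infty$ by the claim $\mu_k/\mu_{k-1}\gtrsim k$ coming from the Gaussian sandwich $e^{-Ms^2/2}\lesssim e^{-2\phi(s)}\lesssim e^{-ms^2/2}$. This does not work as stated: comparing $\mu_k$ to a Gaussian moment with parameter $M$ from below and $\mu_{k-1}$ to one with parameter $m$ from above yields $\mu_k/\mu_{k-1}\ge C\,k\,(m/M)^k$, which \emph{decays} exponentially when $m<M$, so the naive sandwich gives no lower bound at all. What saves you is that you do not actually need $\mu_k/\mu_{k-1}\gtrsim k$ for \eqref{simple-com}: the Cauchy--Schwarz inequality $\mu_{k-1}^2\le\mu_{k-2}\mu_k$ shows $\log\mu_k$ is convex, hence $\mu_{k-1}/\mu_k$ is non-increasing and $\sup_k\mu_{k-1}/\mu_k=\mu_0/\mu_1<\infty$, and this requires no pinching at all. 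With that substitution your finiteness argument for \eqref{simple-com} is complete.

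The genuine gap is in \eqref{off-diag}, and you explicitly acknowledge it. Your heuristic (transition window of $p_k(r)$ around $k\asymp r^2$ of width $\asymp r$, and $\mu_{k-1}/\mu_k\asymp1/k$ there, giving an $O(1/r)$ contribution) is plausible but rests on two unproven quantitative facts: (a) a two-sided Laplace-type estimate $\mu_k/\mu_{k-1}\asymp k$ uniformly in $k$, which as noted above does \emph{not} follow from the crude Gaussian sandwich because the constants $m$ and $M$ appear on opposite sides; and (b) Gaussian-type tail estimates for $p_k(r)$ showing the transition from $0$ to $1$ is confined to a window of width $O(r)$, which requires concentration of the radial density $\propto s^{2k+1}e^{-2\phi(s)}$ near its mode $s_*\asymp\sqrt{k}$, uniformly in $k$. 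Making this rigorous under the sole hypothesis $m\le\Delta\phi\le M$ is precisely the hard content, and your proposal stops at the point where it would be needed. (For reference, the paper does not carry out these estimates in-text either: it simply cites the explicit computations of Lemmas 5.3 and 5.4 of the companion paper \cite{QB3}. So your approach is a direct re-derivation of what that reference does, but it is not finished.)

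Until you supply (a) and (b) with quantitative, $k$-uniform constants depending only on $m,M$, the proof of \eqref{off-diag} is incomplete.
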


 \begin{proof}
 The explicit computations in  \cite[Lemma 5.3, 5.4]{QB3} indeed give the desired relations \eqref{simple-com} and \eqref{off-diag}.
 \end{proof}

\begin{proof}[Proof of Lemma \ref{lem-E4}]
By writing $h_r^R = h_r^\infty - h_R^\infty$, we have
\[
[h_r^R, \Pi_n^{\mathfrak{q}}] =  [h_r^\infty, \Pi_n^{\mathfrak{q}}] - [h_R^\infty, \Pi_n^{\mathfrak{q}}].
\]
Consequently, for proving Lemma \ref{lem-E4},  it suffices to prove that
\[
\lim_{r\to\infty}  \sup_{n\ge \ell}\|  [ h_r^\infty,  \Pi_n^{\mathfrak{q}}] \|_{HS} =0.
\]
However, since
\begin{align*}
&\sup_{n\ge \ell}\Big| \|  [ h_r^\infty,  \Pi_n^{\mathfrak{q}}] \|_{HS} - \|  [ h_r^\infty,  \Pi_n] \|_{HS}\Big|  \le \sup_{n\ge \ell}  \|   [ h_r^\infty,  \Pi_n^{\mathfrak{q}} - \Pi_n]   \|_{HS}
\\
& \le 2 \|h_r^\infty\|_\infty \cdot \sup_{n\ge \ell} \| \Pi_n^{\mathfrak{q}} - \Pi_n \|_{HS} \le  2 \|h_r^\infty\|_\infty  \sqrt{\ell} \xrightarrow{r\to\infty} 0,
\end{align*}
it suffices to show that
\begin{align}\label{small-com}
\lim_{r\to\infty}  \sup_{n\ge \ell}\|  [ h_r^\infty,  \Pi_n] \|_{HS} =0.
\end{align}
To this end, by noting $|\Pi^{\mathfrak{q}}(z,w)| = |\Pi^{\mathfrak{q}}(w, z)|$, we have the following identity:
\begin{align*}
 \|  [ h_r^\infty,  \Pi_n] \|_{HS}^2 =& 2  \int_{|z|\le r}\int_{|w|\ge r} |h(w)|^2 | \Pi_n(z,w)|^2 d\lambda_\phi(z)\lambda_\phi(w)
 \\
 & +  \int_{|z|\ge r } \int_{|w|\ge r } | h(z)-h(w)|^2 | \Pi_n(z,w)|^2 d\lambda_\phi(z)\lambda_\phi(w).
\end{align*}
It follows from Lemma \ref{lem-h-h} and the elementary estimate $|h(z)|  = \mathcal{O}(1/|z|)$ as $|z|\to\infty$,  that there exists  $C>0$ such  that
\begin{align}\label{control-I-I}
 \|  [ h_r^\infty,  \Pi_n] \|_{HS}^2 \le & C(I_1(n, r) + I_2(n, r)),
 \end{align}
 where
 \begin{align*}
  I_1(n,r)& :=  \int_{|z|\le r}\int_{|w|\ge r} \frac{1}{|w|^2}  \cdot | \Pi_n(z,w)|^2 d\lambda_\phi(z)\lambda_\phi(w);
 \\
 I_2(n,r) & := \int_{|z|\ge r } \int_{|w|\ge r }\Big|\frac{1}{z} - \frac{1}{w}\Big|^2 | \Pi_n(z,w)|^2 d\lambda_\phi(z)\lambda_\phi(w).
\end{align*}
Similarly, let us denote
 \begin{align*}
  I_1(r)& :=  \int_{|z|\le r}\int_{|w|\ge r} \frac{1}{|w|^2}  \cdot | \Pi(z,w)|^2 d\lambda_\phi(z)\lambda_\phi(w);
 \\
 I_2(r) & := \int_{|z|\ge r } \int_{|w|\ge r }\Big|\frac{1}{z} - \frac{1}{w}\Big|^2 | \Pi(z,w)|^2 d\lambda_\phi(z)\lambda_\phi(w).
\end{align*}
By Lemma \ref{lem-real}, we have
\begin{align}\label{I-1-I-2}
 \lim_{r\to\infty} I_1(r) = 0 \an  \lim_{r\to\infty} I_2(r) = 0.
\end{align}

\medskip

\noindent \textbf{ Claim A.} For any $r \ge r_{\mathfrak{p}, \mathfrak{q}}$, we have $I_1(n, r) \le I_1(r)$.

\medskip

Indeed,  by using the expression \eqref{finite-app} for $\Pi_n(z,w)$  and using the polar-coordinates system $z = \rho e^{i\alpha}, w = \sigma e^{i \beta}$ , we get
\begin{align*}
 | \Pi_n(\rho e^{i\alpha}, \sigma e^{i \beta} )|^2 =  \sum_{k, m=0}^{n-1} a_k^2  a_m^2 (\rho \sigma)^{k+m} e^{i(k -m)(\alpha-\beta)};
 \\
  | \Pi(\rho e^{i\alpha}, \sigma e^{i \beta} )|^2 =  \sum_{k, m=0}^{\infty} a_k^2  a_m^2 (\rho \sigma)^{k+m} e^{i(k -m)(\alpha-\beta)}.
\end{align*}
It follows that
\begin{align*}
 I_1(n,r) &=  4 \pi^2 \int_0^r   e^{-2 \phi(\rho)}  \rho d\rho \int_r^\infty   e^{-2 \phi(\sigma)} \sigma d \sigma  \cdot \frac{1}{\sigma^2}  \sum_{k=0}^{n-1} a_k^4  (\rho \sigma)^{2k} ;
 \\
 I_1(r) & =  4 \pi^2 \int_0^r    e^{-2 \phi(\rho)}   \rho d\rho \int_r^\infty   e^{-2 \phi(\sigma)} \sigma d \sigma  \cdot \frac{1}{\sigma^2}  \sum_{k=0}^{\infty} a_k^4  (\rho \sigma)^{2k}.
\end{align*}
Hence we have $I_1(n, r) \le I_1(r)$.

\medskip

\noindent \textbf{ Claim B.} For any $r \ge r_{\mathfrak{p}, \mathfrak{q}}$, we have $I_2(n,r ) \le I_2(r) + \frac{1}{r^2}$.

\medskip

Indeed, by using the polar-coordinates system and by using the identity
\[
\Big|  \frac{1}{\rho e^{i \alpha}} - \frac{1}{\sigma e^{i \beta}} \Big|^2 = \frac{1}{\rho^2} + \frac{1}{\sigma^2} -  \frac{1}{\rho \sigma} e^{i (\alpha- \beta) } -   \frac{1}{\rho \sigma} e^{- i (\alpha- \beta) },
\]
 we get
\begin{align*}
I_2(n,r) & =  4 \pi^2 \int_r^\infty   e^{-2 \phi(\rho)}  \rho d\rho \int_r^\infty   e^{-2 \phi(\sigma)} \sigma d \sigma  \cdot \underbrace{\Big[ \Big(\frac{1}{\rho^2} + \frac{1}{\sigma^2}\Big)  \sum_{k=0}^{n-1} a_k^4  (\rho \sigma)^{2k}  - \frac{2}{\rho \sigma} \sum_{k=0}^{n-2}  a_k^2 a_{k+1}^2 (\rho \sigma)^{2k+1} \Big]}_{\text{denoted by $S_n(\rho, \sigma)$}}.
\end{align*}
We can re-group the summands in $S_n(\rho, \sigma)$ in such a way that  in the new expression of  $S_n(\rho, \sigma)$, all summands are positive. Indeed, we have
\begin{align*}
S_n(\rho, \sigma)=&   \sum_{k=0}^{n-1}  a_k^4     \rho^{2k-2} \sigma^{2k}   +   \sum_{k=0}^{n-1}      a_{k}^4 \rho^{2k}\sigma^{2k-2} - \sum_{k=0}^{n-2}  2 a_k^2 a_{k+1}^2 (\rho \sigma)^{2k}
\\
=& a_0^4 \rho^{-2} + a_{n-1}^4 \rho^{2n-2} \sigma^{2n-4} +  \sum_{k=0}^{n-2}  \Big(    a_{k+1}^4     \rho^{2k} \sigma^{2k+2} +  a_{k}^4 \rho^{2k}\sigma^{2k-2} -   2 a_k^2 a_{k+1}^2 (\rho \sigma)^{2k}      \Big)
\\
 = & a_0^4 \rho^{-2} + a_{n-1}^4 \rho^{2n-2} \sigma^{2n-4} +  \sum_{k=0}^{n-2}  (   a_{k+1}^2     \rho^{k} \sigma^{k+1} -  a_{k}^2 \rho^{k}\sigma^{k-1})^2.
\end{align*}
It follows that
\begin{align}\label{I-2-n-new}
I_2(n,r) & =  4 \pi^2 \int_0^r   e^{-2 \phi(\rho)}  \rho d\rho \int_r^\infty   e^{-2 \phi(\sigma)} \sigma d \sigma  \cdot \Big[   a_0^4 \rho^{-2} + a_{n-1}^4 \rho^{2n-2} \sigma^{2n-4} +  \sum_{k=0}^{n-2}  (   a_{k+1}^2     \rho^{k} \sigma^{k+1} -  a_{k}^2 \rho^{k}\sigma^{k-1})^2\Big].
\end{align}
Similarly, we can express $I_2(r)$  in the following way:
\begin{align}\label{I-2-new}
I_2(r) & =  4 \pi^2 \int_0^r   e^{-2 \phi(\rho)}  \rho d\rho \int_r^\infty   e^{-2 \phi(\sigma)} \sigma d \sigma  \cdot \Big[   a_0^4 \rho^{-2} +  \sum_{k=0}^{\infty}  (   a_{k+1}^2     \rho^{k} \sigma^{k+1} -  a_{k}^2 \rho^{k}\sigma^{k-1})^2\Big].
\end{align}
Note that by definition, for any $n\ge 1$,
\begin{align*}
\frac{1}{a_{n-1}^2} = \| z^{n-1}\|_{L^2(\C,\, d\lambda_\phi)}^2 =  2 \pi \int_0^\infty   \rho^{2n-2}  e^{-2\phi(\rho)} \rho d\rho.
\end{align*}
Hence
\begin{align}\label{small-term}
\begin{split}
&  4 \pi^2 \int_r^\infty   e^{-2 \phi(\rho)}  \rho d\rho \int_r^\infty   e^{-2 \phi(\sigma)} \sigma d \sigma   \cdot   a_{n-1}^4 \rho^{2n-2} \sigma^{2n-4}
\\
&  \le  \Big( a_{n-1}^2 \cdot 2 \pi \int_0^\infty   \rho^{2n-2}  e^{-2\phi(\rho)} \rho d\rho\Big) \cdot \Big(  \frac{1}{r^2}    a_{n-1}^2 \cdot   2 \pi \int_0^\infty   \rho^{2n-2}  e^{-2\phi(\rho)} \rho d\rho\Big) = \frac{1}{r^2}.
\end{split}
\end{align}
Comparing \eqref{I-2-n-new} and \eqref{I-2-new}, taking \eqref{small-term} into account, we get the desired inequality
\[
I_2(n, r) \le I_2(r) + \frac{1}{r^2}.
\]

Finally, an application of \eqref{control-I-I} yields that
\begin{align}\label{hard-control}
 \sup_{n\in \N} \|  [ h_r^\infty,  \Pi_n] \|_{HS}^2 \le C (I_1(r) + I_2(r) + \frac{1}{r^2}).
\end{align}
The desired limit equality  \eqref{small-com}  now follows immediately from  \eqref{I-1-I-2} and \eqref{hard-control}.
\end{proof}

{\bf {Remark.}} Note that radial symmetry of the weight of our Fock space has been used in the proof of Claims A,B.

\subsection{Proof of item (iv) of Proposition \ref{prop-im}}\label{subsec-iv}

For any $R> r_{\mathfrak{p}, \mathfrak{q}}$,  denote
\[
\Psi^{(R)}_{\mathfrak{p}, \mathfrak{q}}(\X)  =  \exp\Big(   \int\limits_{r_{\mathfrak{p}, \mathfrak{q}} \le |z|\le R}    (  \kappa(\mathfrak{p}, z) - \kappa(\mathfrak{q}, z) )  \Pi(z,z)  d\lambda_\phi(z) \Big)   \prod_{x\in\X: | x| \le R}   \prod_{i= 1}^\ell\Big| \frac{x-p_i}{x- q_i }\Big|^2.
\]
Using Notation \eqref{many-notation}, we express the expectation $\E_{\PP_{\Pi}^{\mathfrak{q}}}[\Psi_{\mathfrak{p}, \mathfrak{q}}]$ as follows.

\begin{proposition}\label{prop-factor-general}
We have 
\[
\E_{\PP_{\Pi}^{\mathfrak{q}}}[\Psi_{\mathfrak{p}, \mathfrak{q}}] =  \Det_3(1 + T) \cdot \exp\Big( \sum_{i=1}^4 E_i(r) \Big),
\]
with $E_1(r), E_2(r),  E_3(r), E_4(r)$  given by
\begin{align*}
E_1(r) &=    \tr(\chi_0^r T ) - \frac{1}{2} \tr(\chi_0^r T^2)     - \frac{1}{2}  \tr(h_r^\infty  \Pi^{\mathfrak{q}}    h_0^r \Pi^{\mathfrak{q}}  ) ;
\\
E_2(r) &= \int_{  | z | \ge r}    \Big( h(z) - \frac{h(z)^2}{2}   + (\kappa(\mathfrak{p}, z) - \kappa(\mathfrak{q}, z)) \Big) \Pi(z,z)   d\lambda_\phi(z) ;
\\
E_3(r) &= \int_{ | z | \ge r}    \Big(  h(z) - \frac{h(z)^2}{2} \Big) (\Pi^{\mathfrak{q}}(z,z)    - \Pi(z,z)) d\lambda_\phi(z) ;
\\
E_4(r) & =  \frac{1}{4}\|  [ h_r^\infty,  \Pi^{\mathfrak{q}}] \|_{HS}^2.
\end{align*}
\end{proposition}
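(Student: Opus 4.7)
The plan is to replay the argument of Proposition \ref{prop-factor} with $\Pi_n$ replaced by $\Pi$, obtaining a finite-$R$ analogue of the desired factorization, and then to pass to the limit $R\to\infty$ using the $L^1$-convergence $\Psi^{(R)}_{\mathfrak{p},\mathfrak{q}}\to\Psi_{\mathfrak{p},\mathfrak{q}}$ from Proposition \ref{prop-im}(i) together with the $\mathscr{C}_3$-continuity established in Proposition \ref{prop-order-det}.

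First, since $T_R\in\mathscr{C}_1(\mathscr{H})$ by Lemma \ref{lem-trace-class}, one has
\[
\E_{\PP_\Pi^{\mathfrak{q}}}\bigl[\Psi^{(R)}_{\mathfrak{p},\mathfrak{q}}\bigr]
=\exp\!\Bigl(\!\!\int\limits_{r_{\mathfrak{p},\mathfrak{q}}\le|z|\le R}\!\!(\kappa(\mathfrak{p},z)-\kappa(\mathfrak{q},z))\,\Pi(z,z)\,d\lambda_\phi\Bigr)\det(1+T_R)
=\Det_3(1+T_R)\exp\bigl(\mathcal{I}(R;r)\bigr),
\]
where $\mathcal{I}(R;r)$ denotes the $\kappa$-integral added to $\tr(T_R)-\tfrac12\tr(T_R^2)$. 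I would then reproduce verbatim the bookkeeping from the proof of Proposition \ref{prop-factor} (decomposing $1=\chi_0^r+\chi_r^\infty$, applying cyclicity of the trace, invoking the commutator identity \eqref{trace-square} now in the form $\tr((h_r^R\Pi^{\mathfrak{q}})^2)=\int h^2\Pi^{\mathfrak{q}}\,d\lambda_\phi-\tfrac12\|[h_r^R,\Pi^{\mathfrak{q}}]\|_{HS}^2$, and splitting the $\Pi^{\mathfrak{q}}$-integrals into a $\Pi$-piece plus a finite-rank correction). This yields
\[
\mathcal{I}(R;r)=\sum_{i=1}^4 E_i(R;r),
\]
where $E_i(R;r)$ is the natural finite-$R$ truncation of $E_i(r)$. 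Note that in contrast with Proposition \ref{prop-factor}, one does not discard any $\kappa$-integral here: the full annular $\kappa$-contribution already appears in $\mathcal I(R;r)$ and is absorbed into $E_2(R;r)$.

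Next, I would pass to the limit $R\to\infty$ term by term. The left-hand side converges to $\E[\Psi_{\mathfrak{p},\mathfrak{q}}]$ by Proposition \ref{prop-im}(i), and $\Det_3(1+T_R)\to\Det_3(1+T)$ by Proposition \ref{prop-order-det}. The integrals $E_2(R;r)\to E_2(r)$ and $E_3(R;r)\to E_3(r)$ follow by dominated convergence, the domination being supplied by Lemma \ref{lem-h-k} (together with the fact that $\Pi^{\mathfrak{q}}-\Pi$ is a rank-$\ell$ correction). The convergence $E_4(R;r)\to E_4(r)$ reduces, after expansion of $\|[h_r^R,\Pi^{\mathfrak{q}}]\|_{HS}^2$ as a double integral, to dominated convergence with the $L^2$-dominator provided by Lemma \ref{lem-com-q}.

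The principal obstacle is $E_1(R;r)\to E_1(r)$, which requires trace-class convergence of the three pieces $\chi_0^r T_R\chi_0^r$, $\chi_0^r T_R^2\chi_0^r$, and $h_r^R\Pi^{\mathfrak{q}} h_0^r\Pi^{\mathfrak{q}}$ to their $R=\infty$ counterparts in $\mathscr{C}_1(\mathscr{H})$. The strategy mirrors the proof of Lemma \ref{lem-E1}: factor each operator as a product of a bounded middle term and two Hilbert-Schmidt outer terms of the form $\chi_0^r\sqrt{|h|}\Pi^{\mathfrak{q}}$ and $\Pi^{\mathfrak{q}}\sqrt{|h|}\chi_0^r$, observing that the singularity of $\sqrt{|h|}$ at $q_1,\ldots,q_\ell$ is neutralised by the vanishing bound $|\Pi^{\mathfrak{q}}(z,w)|\lesssim\prod_i|(z-q_i)(w-q_i)|$ near the $q_i$; then apply Proposition \ref{prop-cp} to promote the strong-operator convergence $\chi_0^R\to 1$ to $\mathscr{C}_2$-convergence of the outer factors, and close the argument with the operator H\"older inequality \eqref{S-holder} to derive $\mathscr{C}_1$-convergence of the products, hence of the traces.
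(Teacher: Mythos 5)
Your overall strategy is exactly the paper's: establish the finite-$R$ identity \eqref{factor-general} by the same algebraic bookkeeping as in Proposition \ref{prop-factor} with $\Pi$ in place of $\Pi_n$, then let $R\to\infty$, using Proposition \ref{prop-im}(i) for $\E_{\PP_\Pi^{\mathfrak q}}[\Psi^{(R)}_{\mathfrak p,\mathfrak q}]\to\E_{\PP_\Pi^{\mathfrak q}}[\Psi_{\mathfrak p,\mathfrak q}]$, Proposition \ref{prop-order-det} for the $\Det_3$ factor, dominated convergence for $E_2$ and $E_3$, and the $\mathscr{C}_1$-convergences of the proof of Lemma \ref{lem-E1} for $E_1$. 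One aside is off: the $\kappa$-integral carried by $\Psi^{(R)}$ is over $r_{\mathfrak p,\mathfrak q}\le|z|\le R$, while $E_2(R;r)$ has it over $r\le|z|\le R$, and reconciling the two still requires the intermediate annular $\kappa$-integral to vanish, which is precisely \eqref{kappa-vanish}, i.e.\ radiality of $\phi$; in the radial setting of Section 3 all these $\kappa$-integrals are identically zero, so nothing is being ``absorbed'' and one discards zeros exactly as in Proposition \ref{prop-factor}.

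The genuine gap is your argument for $E_4(R;r)\to E_4(r)$. Dominated convergence with dominator $|h_r^\infty(z)-h_r^\infty(w)|^2|\Pi^{\mathfrak q}(z,w)|^2$ from Lemma \ref{lem-com-q} does not apply, because $|h_r^R(z)-h_r^R(w)|^2$ is not pointwise bounded by $|h_r^\infty(z)-h_r^\infty(w)|^2$. Take $z$ with $|z|$ slightly below $R$ and $w\approx z$ with $|w|$ slightly above $R$: then $h_r^\infty(z)-h_r^\infty(w)=h(z)-h(w)$ is arbitrarily small, while $h_r^R(z)-h_r^R(w)=h(z)$ is of order $1/R$; moreover $|\Pi^{\mathfrak q}(z,w)|^2$ is comparable to $\Pi^{\mathfrak q}(z,z)$ there, so the near-diagonal strip around $|z|=R$ is not controlled by the proposed dominator and its contribution is not a priori negligible. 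What you actually need is $\|[h_R^\infty,\Pi^{\mathfrak q}]\|_{HS}\to 0$ as $R\to\infty$, after which $[h_r^R,\Pi^{\mathfrak q}]=[h_r^\infty,\Pi^{\mathfrak q}]-[h_R^\infty,\Pi^{\mathfrak q}]$ gives the result; this is supplied by the estimate \eqref{hard-control} in the proof of Lemma \ref{lem-E4}, specialised to $\Pi$ in place of $\Pi_n$, which rests on Lemma \ref{lem-h-h} together with the off-diagonal decay $I_1(R),I_2(R)\to0$ of Lemma \ref{lem-real}, the passage from $\Pi$ to $\Pi^{\mathfrak q}$ being a finite-rank perturbation. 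That decay is the essential input that a naive dominated-convergence argument cannot see.
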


\begin{proof}
It suffices to prove that we have the following factorization
\begin{align}\label{factor-general}
\E_{\PP_{\Pi}^{\mathfrak{q}}}[\Psi^{(R)}_{\mathfrak{p}, \mathfrak{q}}] =  \Det_3(1 + T_{R}) \cdot \exp\Big( \sum_{i=1}^4 E_i(R; r) \Big),
\end{align}
with $E_1(R; r), E_2(R; r),  E_3(R; r), E_4(R; r)$  given by
\begin{align*}
E_1(R; r) &=    \tr(\chi_0^r T_{R} ) - \frac{1}{2} \tr(\chi_0^r T_{R}^2)     - \frac{1}{2}  \tr(h_r^R   \Pi^{\mathfrak{q}}    h_0^r \Pi^{\mathfrak{q}}  ) ;
\\
E_2(R; r) &= \int_{ r \le | z | \le R}    \Big( h(z) - \frac{h(z)^2}{2}   + (\kappa(\mathfrak{p}, z) - \kappa(\mathfrak{q}, z)) \Big) \Pi(z,z)   d\lambda_\phi(z) ;
\\
E_3(R; r) &= \int_{ r \le | z | \le R}    \Big(  h(z) - \frac{h(z)^2}{2} \Big) (\Pi^{\mathfrak{q}}(z,z)    - \Pi(z,z)) d\lambda_\phi(z) ;
\\
E_4(R; r) & =  \frac{1}{4}\|  [ h_r^R,  \Pi^{\mathfrak{q}}] \|_{HS}^2.
\end{align*}
The proof of factorization  \eqref{factor-general} is the same as that of  the factorization in Proposition \ref{prop-factor}.
\end{proof}

\begin{proof}[Proof of item (iv) of Proposition \ref{prop-im}]
The continuity  of the mapping $\mathfrak{p} \mapsto  \E_{\PP_{\Pi}^\mathfrak{q}}[\Psi_{\mathfrak{p}, \mathfrak{q}}]$ is immediate from the factorization in Proposition \ref{prop-factor-general} and the fact that $r = r_{\mathfrak{p}, \mathfrak{q}}$ depends continuously on $\mathfrak{p}, \mathfrak{q}$.
Fix any $\mathfrak{q}^0 = (q_1^0, \cdots, q_\ell^0)$ of distinct points of $\C$.  By the chain property of the Radon-Nikodym derivative,  we have
\[
\frac{d\PP_{\Pi}^{\mathfrak{p}}}{d\PP_{\Pi}^{\mathfrak{q}}} (\X) =    \frac{d\PP_{\Pi}^{\mathfrak{p}}}{d\PP_{\Pi}^{\mathfrak{q}^{0}}} (\X) \cdot \Big(  \frac{d\PP_{\Pi}^{\mathfrak{q}}}{d\PP_{\Pi}^{\mathfrak{q}^{0}}} (\X) \Big)^{-1}.
\]
In other words, we have
\[
\frac{\Psi_{\mathfrak{p}, \mathfrak{q}}(\X)}{\E_{\PP_{\Pi}^\mathfrak{q}}[\Psi_{\mathfrak{p}, \mathfrak{q}}]} = \frac{\Psi_{\mathfrak{p}, \mathfrak{q}^0}(\X)}{\E_{\PP_{\Pi}^{\mathfrak{q}^0}}[\Psi_{\mathfrak{p}, \mathfrak{q}^0}]}  \cdot  \frac{\E_{\PP_{\Pi}^{\mathfrak{q}^0}}[\Psi_{\mathfrak{q}, \mathfrak{q}^0}]}{\Psi_{\mathfrak{q}, \mathfrak{q}^0}(\X)}.
\]
Consequently,  the continuity obtained in item (iii) of Proposition  \ref{prop-im}, together with the  continuity of the mapping $\mathfrak{p} \mapsto  \E_{\PP_{\Pi}^\mathfrak{q}}[\Psi_{\mathfrak{p}, \mathfrak{q}}]$ implies the desired continuity of the mapping $(\mathfrak{p}, \mathfrak{q}) \mapsto  \E_{\PP_{\Pi}^\mathfrak{q}}[\Psi_{\mathfrak{p}, \mathfrak{q}}]$. Item (iv) of Proposition \ref{prop-im} is proved completely.
\end{proof}

{\bf Acknowledgements.}
We are deeply grateful to Alexei Klimenko for useful discussions. The research of A.~Bufetov on this project has received funding from the European Research Council (ERC) under the European Union's Horizon 2020 research and innovation programme under grant agreement No 647133 (ICHAOS). It has also been funded by the Grant MD 5991.2016.1 of the President of the Russian Federation, by  the Russian Academic Excellence Project `5-100'.  Y.~Qiu is supported by the grant IDEX UNITI - ANR-11-IDEX-0002-02, financed by Programme ``Investissements d'Avenir'' of the Government of the French Republic managed by the French National Research Agency.

This project was started as research in pairs at the Abbaye de L\'erins, Ile St. Honorat, and continued at the ICTP, Miramare, Trieste. We are deeply grateful to these institutions for their warm hospitality.


\begin{thebibliography}{1}




\bibitem{Buf-gibbs}
A.~I. Bufetov.
 {Q}uasi-{S}ymmetries of {D}eterminantal {P}oint {P}rocesses.
 arXiv:1409.2068.

\bibitem{Buf-rigid}
A.~I. Bufetov.
\newblock {R}igidity of {D}eterminantal {P}oint {P}rocesses with the {A}iry,
  the {B}essel and the {G}amma {K}ernel. Bulletin of Math. Sciences
2016, Vol. 6, Issue 1, pp 163--172.

\bibitem{buf-cond}  A.~I.~ Bufetov,  Conditional measures of determinantal point processes, arXiv:1605.01400, May 2016.



\bibitem{BDQ}
   A.~I.~Bufetov and Y. Dabrowski and Y.  Qiu,
Linear rigidity of stationary stochastic processes, arXiv:1507.00670,  to appear in Ergodic Theory Dynam. Systems.


\bibitem{QB3}
A.~I. Bufetov and Y. Qiu,
\newblock Determinantal point processes associated with {H}ilbert spaces of
  holomorphic functions.
\newblock  Commun. Math. Phys. (2017). doi:10.1007/s00220-017-2840-y.





\bibitem{Ghosh-sine}
S. Ghosh,
 Determinantal processes and completeness of random exponentials: the
  critical case.
 Probability Theory and Related Fields, 163 (2015),  3, pp 643--665.


\bibitem{Ghosh-rigid}
S. Ghosh and Y. Peres,
\newblock Rigidity and tolerance in point processes: {G}aussian zeros and
  {G}inibre eigenvalues.
 arXiv:1211.3506, to appear in Duke Math. J.

\bibitem{helem} A. Ya. Helemskii, Lectures and exercises in functional analysis, MCCME-AMS, 2004.

\bibitem{hsoo}
A. E. Holroyd, T. Soo,
Insertion and Deletion Tolerance of Point Processes, Electron. J. Probab.
Volume 18 (2013), paper no. 74, 24 pp.

\bibitem{kal1} O. Kallenberg, Foundations of modern probability, Springer Verlag 2002.

\bibitem{kal2} O. Kallenberg, Probabilistic symmetries and invariance principles, Springer Verlag 2005.

\bibitem{Macchi}
O. Macchi, The coincidence approach to stochastic point processes. Advances in Appl. Probability 7 (1975), 83 -- 122.

\bibitem{GO-Adv} G. Olshanski,
The quasi-invariance property for the Gamma kernel determinantal measure. Advances in Mathematics,
2011. Vol. 226. P. 2305--2350.



\bibitem {Rohmeas}
Rohlin, V. A. On the fundamental ideas of measure theory. (Russian) Mat. Sbornik N.S. 25(67), (1949),  107 --150.

\bibitem{Osada-Shirai} H. Osada and T. Shirai,
Absolute continuity and singularity of Palm measures of the Ginibre point process.
Probab. Theory Related Fields 165 (2016), no. 3-4, 725--770.

\bibitem{ShirTaka1}T. Shirai and Y. Takahashi, Random point fields associated with certain Fredholm determinants.
I. Fermion, Poisson and boson point processes. J. Funct. Anal. 205 (2003), no. 2, 414--463.

\bibitem{ShirTaka2}T. Shirai and Y. Takahashi, Random point fields associated with certain Fredholm determinants.
II. Fermion shifts and their ergodic and Gibbs properties. Ann. Probab. 31 (2003), no. 3, 1533--1564.


\bibitem{Simon-det}
B. Simon.
\newblock Notes on infinite determinants of {H}ilbert space operators.
\newblock {\em Advances in Math.}, 24(3):244--273, 1977.

\bibitem{Simon-trace}
B. Simon.
\newblock {\em Trace ideals and their applications}, volume 120 of {\em
  Mathematical Surveys and Monographs}.
\newblock American Mathematical Society, Providence, RI, second edition, 2005.


\bibitem{sinai} Ya.G. Sinai, Theory of phase transitions, rigorous results, Nauka Moscow 1980.



\bibitem{Soshnikov} A. Soshnikov, Determinantal random point fields.
 Uspekhi Mat. Nauk 55 (2000), no. 5(335), 107--160;
 Russian Math. Surveys 55 (2000), no. 5, 923--975.





\end{thebibliography}


\bigbreak
\noindent Alexander I. Bufetov\\
\noindent  Aix-Marseille Universit{\'e}, Centrale Marseille, CNRS, Institut de Math{\'e}matiques de Marseille, UMR7373, \\
  39 Rue F. Joliot Curie 13453, Marseille, France; \\
\noindent Steklov Institute of Mathematics, Moscow, Russia;\\
\noindent Institute for Information Transmission Problems, Moscow, Russia; \\
\noindent National Research University Higher School of Economics, Moscow, Russia.\\
\noindent  bufetov@mi.ras.ru, alexander.bufetov@univ-amu.fr

\bigbreak
\noindent Yanqi Qiu \\
\noindent
 CNRS, Institut de Math{\'e}matiques de Toulouse, Universit{\'e} Paul Sabatier, Toulouse, France.\\
\noindent yqi.qiu@gmail.com

\end{document}